\documentclass[a4paper,12pt,leqno]{article}
\usepackage{latexsym}
\usepackage[all]{xy}

\usepackage{amssymb,amsxtra} % \mathbb
\usepackage{amsmath} % \mathbb
\usepackage{amsthm}
\usepackage{geometry}
\usepackage{enumerate}
\usepackage[colorlinks,backref]{hyperref}

% \P == \mathbb{P}
% \N == \mathbb{N}
\def\Z{{\mathbb{Z}}}% \Z == \mathbb{Z}
\def\Q{{\mathcal{Q}}}% \Q == \mathbb{Q}
\def\K{{\mathbb{K}}}% \K == \mathbb{K}
%\def\CC{{\mathbb{C}}}% \C == \mathbb{C}
% \R == \mathbb{R}
% \C == \mathbb{C}
\def\A{{\mathcal{A}}}% \A == \mathcal{A}
\def\B{{\mathcal{B}}}% \B == \mathcal{B}
% \C == \mathcal{C}
% \D == \mathcal{D}
% \E == \mathcal{E}
% \S == \mathcal{S}
% \V == \mathcal{V}
% \calC == \mathcal{C}
% \calF == \mathcal{F}
% \F == \mathbb{F}
% \G == \mathcal{G}

\newcommand{\Am}{(\mathcal{A},m)}
\newcommand{\Bm}{(\mathcal{B},m)}
\newcommand{\p}{\partial}
\newcommand{\xymat}{\SelectTips{cm}{}\xymatrix}

\DeclareMathOperator{\pdeg}{pdeg}
\DeclareMathOperator{\rank}{rank}
\DeclareMathOperator{\codim}{codim}

\DeclareMathOperator{\coker}{coker}
\DeclareMathOperator{\Der}{Der}

\numberwithin{equation}{section}

\theoremstyle{break}
\newtheorem{theorem}{Theorem}[section]
\newtheorem{prop}[theorem]{Proposition}
\newtheorem{cor}[theorem]{Corollary}
\newtheorem{lemma}[theorem]{Lemma}
\newtheorem{definition}[theorem]{Definition}
\newtheorem{rem}[theorem]{Remark}

\newtheorem{example}[theorem]{Example}

\title{Heavy hyperplanes in multiarrangements and their freeness}
\author{Takuro Abe\footnote{Institute of Mathematics for Industry, Kyushu University, Japan. Email: abe@imi.kyushu-u.ac.jp} and 
Lukas K\"{u}hne\footnote{Mathematical Institute, University of Bonn, Germany. Email: lf.kuehne@gmail.com}
}
\date{\today} 

\pagestyle{plain}

%%%%%%    TEXT START    %%%%%%
\begin{document}

\maketitle

\begin{abstract}
Only few categories of free arrangements are known in which 
Terao's conjecture holds. One of such categories consists of $3$-arrangements 
with unbalanced Ziegler restrictions. In this paper, we generalize 
this result to arbitrary dimensional arrangements in terms of flags by 
introducing unbalanced multiarrangements.
For that purpose, we generalize several freeness criterions for simple arrangements, including
Yoshinaga's freeness criterion, to unbalanced multiarrangements.
\end{abstract}

\section{Introduction}
In the theory of hyperplane arrangements, the freeness of an arrangement is one of the most important 
objects to study. Terao's conjecture asserting the dependence of the 
freeness only on the combinatorics is the longstanding open problem in this area. The 
recent approach to this problem, which gives a partial answer, is based on 
multiarrangements due to Yoshinaga's criterion in \cite{Y}, \cite{Y2}, \cite{AY2} and 
\cite{A2}.

One of the most appealing results in this approach is the combinatorial dependence of
the freeness of a 3-arrangement with an unbalanced Ziegler restriction. Namely, let $\A$ be a 
$3$-arrangement and $(\A^H,m^H)$ the Ziegler restriction of $\A$ onto $H \in \A$, which is defined by
$\A^H :=\{H \cap H' \mid H' \in \A \setminus \{H\}\}$ and $m^H (X):= |\A_X\setminus \lbrace H \rbrace|$ for $X\in\A^H$.
We say that $(\A^H,m^H)$ is unbalanced if 
there is an $X \in \A^H$ such that $2m^H(X) \ge |m^H|$. 
For an unbalanced $2$-multiarrangement $(\A^H,m^H)$, the exponents are 
of the form $(m^H(X),|m^H|-m^H(X))$, which is determined by the combinatorics of 
$(\A^H,m^H)$ (and also $\A$). We say that $\A$ has an unbalanced Ziegler restriction if 
the Ziegler restriction of $\A$ onto one of $H \in \A$ is unbalanced. In 
conclusion, 
the freeness of such $\A$ depends only on $L(\A)$, i.e., 
such an $\A$ is free if and only if $\chi_0(\A;0)=m^H(X)(|m^H|-m^H(X))$ by \cite{Y}.

Hence it is a natural and interesting progression to question whether we can generalize this 
formulation to arrangements in an arbitrary dimensional vector space.
It was unclear in the beginning as to how we should define
an unbalanced multiarrangment in the vector space whose dimension is at least three. The first aim 
of this paper is to provide the definition of ``unbalanced'' multiarrangements in an arbitrary 
dimensional vector space. 
This definition is analogue to the one for 2-multiarrangements.

\begin{definition}
Let $(\A,m)$ be a multiarrangement in $V=\K^\ell$. 
\begin{itemize}
\item[(1)]
We say that a hyperplane $H_0\in\A $ is the \textbf{heavy hyperplane} with $m_0:=m(H_0)$ if
$2m_0 \ge |m|$. A multiarrangement $\Am$ is called \textbf{unbalanced} if it has the heavy hyperplane $H_0\in\A$.
\item[(2)]
We say that $(\A,m)$ has a \textbf{locally heavy hyperplane} $H_0 \in \A$ with 
$m_0:=m(H_0)$ if $2m_0 \ge |m_X|$ for all the localization 
$(\A_X,m_X)$ with $X \in \A^{H_0}$ satisfying 
$|\A_X| \ge 3$.
\item[(3)]
Let $H_0$ be a locally heavy hyperplane in $(\A,m)$. The 
\textbf{Euler-Ziegler restriction} $(\A^{H_0},m^{H_0})$ of $(\A,m)$ onto $H_0 \in \A$ is 
defined by, $\A^{H_0}:=\{H_0 \cap H \mid 
H \in \A \setminus \{H_0\}$, and 
$m^{H_0}(X):=
\sum_{H \in \A_X \setminus \{H_0\}} m(H)$.
\item[(4)]
Let $(\A,m)$ be an arbitrary multiarrangement and 
$H_0 \in \A$. The 
\textbf{multi-Ziegler restriction} $(\A^{H_0},m^{H_0})$ of $(\A,m)$ onto $H_0 \in \A$ is 
defined by, $\A^{H_0}:=\{H_0 \cap H \mid 
H \in \A \setminus \{H_0\}\}$, and 
$m^{H_0}(X):=\sum_{H \in \A_X \setminus \{H_0\}} m(H)$.
\end{itemize}
\label{heavy}
\end{definition}

Note that clearly the heavy hyperplane is also a locally heavy one. 
Definition \ref{heavy} (3) is a very simple generalization of the classical Ziegler restriction 
introduced in \cite{Z}. 
In general, this definition does not work as well as 
it does for simple arrangements. However, if we focus on 
a locally heavy hyperplane, then this simply generalized Ziegler multiplicity coincides with 
the Euler multiplicity defined in \cite{ATW}. Hence this restriction can be useful, as the following 
main result shows:

\begin{theorem}
\begin{itemize}
\item[(1)]
Let $(\A,m)$ be unbalanced with the heavy hyperplane $H_0 \in \A$ with 
$m_0:=m(H_0)$. Then 
$$
b_2(\A,m)-m_0(|m|-m_0) \ge b_2(\A^{H_0},m^{H_0}).
$$
Moreover, 
$(\A,m)$ is free 
if and only if the Euler-Ziegler restriction $(\A^{H_0},m^{H_0})$ of 
$(\A,m)$ onto $H_0$ is free, and 
$b_2(\A,m)-m_0(|m|-m_0)=b_2(\A^{H_0},m^{H_0})$. In this case, 
$\exp(\A,m)=(m_0,d_2,\ldots,d_\ell)$, where 
$\exp(\A^{H_0},m^{H_0})=(d_2,\ldots,d_\ell)$.
\item[(2)]
Let $(\A,m)$ have a locally heavy hyperplane $H_0 \in \A$ with 
$m_0:=m(H_0)$. Then 
$(\A,m)$ is free if the Euler-Ziegler restriction $(\A^{H_0},m^{H_0})$ of 
$(\A,m)$ onto $H_0$ is free, and 
$b_2(\A,m)-m_0(|m|-m_0)=b_2(\A^{H_0},m^{H_0})$. In this case, 
$\exp(\A,m)=(m_0,d_2,\ldots,d_\ell)$, where 
$\exp(\A^{H_0},m^{H_0})=(d_2,\ldots,d_\ell)$.
\end{itemize}
\label{AYheavy}
\end{theorem}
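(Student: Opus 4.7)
My plan is to reduce Theorem~\ref{AYheavy} to the Abe-Terao-Wakefield restriction theorem from~\cite{ATW} by first showing that, under the (locally) heavy hypothesis, the Euler-Ziegler multiplicity $m^{H_0}$ of Definition~\ref{heavy}(3) coincides with the Euler multiplicity $m^*$ of~\cite{ATW}. This is the central reduction: once achieved, the rest is orchestration of known tools.

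\textbf{Multiplicity matching.} For each $X \in \A^{H_0}$ I would compute $m^*(X)$ from the localization $(\A_X, m_X)$, which is essentially a rank-$2$ multiarrangement. If $|\A_X| = 2$, the localization is Boolean and tautologically $m^*(X) = m(H) = m^{H_0}(X)$ for the unique $H \in \A_X \setminus \{H_0\}$. If $|\A_X| \ge 3$, the local heaviness $2m_0 \ge |m_X|$ makes $(\A_X, m_X)$ an unbalanced $2$-multiarrangement with heavy $H_0$; such arrangements are known to be free with exponents $(m_0, |m_X| - m_0)$, and reading the Euler multiplicity off an explicit Saito basis yields $m^*(X) = |m_X| - m_0 = \sum_{H \in \A_X \setminus \{H_0\}} m(H) = m^{H_0}(X)$. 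Hence the Euler-Ziegler restriction equals the ATW restriction as multiarrangements.

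\textbf{Freeness and the $b_2$ inequality.} With multiplicities identified, part~(2) follows from Saito's criterion: from a homogeneous basis of $D(\A^{H_0}, m^{H_0})$ together with a degree-$m_0$ ``Euler-type'' derivation $\theta_0 \in D(\A,m)$ provided by local heaviness, one assembles $\ell$ derivations of total polynomial degree $m_0 + \sum d_i = |m|$. The numerical hypothesis $b_2(\A,m) - m_0(|m|-m_0) = b_2(\A^{H_0}, m^{H_0})$ is exactly what is needed to verify that their Jacobian equals $\prod_H \alpha_H^{m(H)}$ up to a nonzero constant. For part~(1), global heaviness yields the inequality $b_2(\A,m) - m_0(|m|-m_0) \ge b_2(\A^{H_0}, m^{H_0})$ by comparing the ATW formula for $\chi(\A,m;t)$ with the factor $(t - m_0)$ split off corresponding to $\theta_0$; the converse direction of the iff follows because freeness of $(\A,m)$ with $m_0$ as least exponent forces the ATW restriction sequence to be short exact, yielding freeness of the Euler-Ziegler restriction and the $b_2$-equality.

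\textbf{Main obstacle.} The principal technical hurdle is the multiplicity matching step, especially handling the edge case $|\A_X| = 2$ cleanly and justifying the explicit Saito basis for unbalanced $2$-multiarrangements (which must be valid even at the boundary $2m_0 = |m_X|$). A secondary difficulty is constructing the degree-$m_0$ derivation $\theta_0 \in D(\A,m)$ from purely \emph{local} heaviness in part~(2); unlike the globally heavy case, this requires patching local derivations into a single global object, which I expect to be the main analytic content of the argument.
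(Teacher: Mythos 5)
Your multiplicity-matching step is fine but is not the heart of the matter: the identity $m^*(X)=|m_X|-m_0=m^{H_0}(X)$ under (local) heaviness is exactly \cite[Proposition 4.1]{ATW2}, which the paper simply quotes as~\eqref{PreEq1}. The genuine gap is in your ``if'' direction. You propose to take a homogeneous basis $\theta_1,\ldots,\theta_{\ell-1}$ of $D(\A^{H_0},m^{H_0})$ together with a degree-$m_0$ derivation $\theta_0$ and verify Saito's criterion, with the $b_2$-equality supplying the degree count. But $\theta_1,\ldots,\theta_{\ell-1}$ live on $H_0$ (over $S/(\alpha_{H_0})$); Saito's criterion needs derivations on $V$, i.e.\ preimages of the $\theta_i$ under the restriction map $\mathrm{res}^1_{H_0}\colon D_0(\A,m)\to D(\A^{H_0},m^{H_0})$ of Lemma~\ref{Le1}. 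The surjectivity of this map is precisely equivalent to freeness (Theorem~\ref{Th3}), and the numerical condition $b_2(\A,m)-m_0(|m|-m_0)=b_2(\A^{H_0},m^{H_0})$ does not hand you lifts by a degree count; showing that the equality forces surjectivity is where all the work lies. In the paper this is done in two nontrivial stages: in rank $3$ a Hilbert-series computation with the acyclic Solomon--Terao-type complex identifies $b_2^{H_0}(\A,m)-b_2(\A^{H_0},m^{H_0})$ with $\dim\coker\mathrm{res}^1_{H_0}$ (Proposition~\ref{Prop4}, the multi-analogue of Yoshinaga's $3$-arrangement theorem), and in rank $\ell\ge 4$ one passes from the $b_2$-equality to local freeness along $H_0$ via the local-global formula, and then to global freeness by induction using the splitting criterion for reflexive sheaves of Abe--Yoshinaga (Proposition~\ref{Prop3}). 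Your sketch contains none of this, and without it the argument is essentially circular: you assume the basis of the restriction can be realized inside $D(\A,m)$, which is what has to be proved.

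Two further points. First, your derivation $\theta_0$ of degree $m_0$ does not exist in general for a locally heavy (or even heavy) non-free $(\A,m)$; you correctly flag the ``patching of local derivations'' as a difficulty, but you offer no construction, and no such patching is ever done in the paper. Instead the paper adds $k\delta_{H_0}$ so that the enlarged multiarrangement embeds in a free supersolvable one (Proposition~\ref{Prop2}) and transfers freeness back and forth with the addition-deletion theorem (Proposition~\ref{Prop1}, Remark~\ref{Rem2}); this is also how part (2) is reduced to part (1). Second, your inequality in part (1), ``compare the ATW formula for $\chi$ with the factor $(t-m_0)$ split off'', is not a proof: the divisibility $(t-m_0)\mid\chi(\A,m;t)$ (Theorem~\ref{Th2}) says nothing by itself about how the reduced constant term compares with $b_2(\A^{H_0},m^{H_0})$; the paper obtains the inequality by decomposing $b_2^{H_0}(\A,m)=\sum_{Y\in L_2(\A^{H_0})}b_2^{H_0}(\A_Y,m_Y)$ and applying the rank-$3$ cokernel estimate of Proposition~\ref{Prop4} to each localization. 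As it stands, your proposal reduces the theorem to steps that are either unproved (lifting, existence of $\theta_0$) or only heuristically indicated, so it does not constitute a proof.
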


Theorem \ref{AYheavy} enables us to determine
the freeness of an arbitrary unbalanced multiarrangement. 
The determination of the freeness of a given multiarrangement is far more difficult
than that of a simple arrangement.
There are only two ways to validate it: Ziegler's free restriction theorem in \cite{Z},  
or the addition-deletion theorem in~\cite{ATW2}. To apply the former, 
we need to find a free simple arrangement whose Ziegler restriction is the given 
one. To apply the latter, we need a free multiarrangement close to the given one
and then increase/decrease multiplicities by applying the addition-deletion theorem in \cite{ATW2}. 
Theorem \ref{AYheavy} enables us
to check the freeness only by using the information of the given one under the heaviness assumption. Let us see 
how to apply it in the following example:

\begin{example}
Let $(\A,m)$ be a multiarrangement defined by 
$$
x^5y^2z^{16}(x-y)^3(y-z)^2(x-z)^4=0.
$$
This has the heavy hyperplane $H:=\{z=0\}$ by definition.
We can easily compute both sides in Theorem \ref{AYheavy} (1) as 
$$
b_2(\A,m)-m(H)(|m|-m(H))=63 =b_2(\A^H,m^H).
$$
Note that the Euler-Ziegler restriction of $(\A,m)$ onto $H$ 
is free with exponents $(7,9)$, see \cite{WY} for example. 
Hence $(\A,m)$ is free with exponents 
$(7,9,16)$ by Theorem \ref{AYheavy} (1). In fact, the 
multiarrangement $(\A,m+k\delta_H)$ is free when $
-7 \le k$ by Theorem \ref{AYheavy} (2).

Let $(\B,m)$ be a multiarrangement defined by 
$$
x^2y^2z^{14}(x-y)^3(y-z)^3(x-z)^4=0.
$$
This has the heavy hyperplane $H:=\{z=0\}$ by definition.
We can easily compute both sides in Theorem \ref{AYheavy} (1) as 
$$
b_2(\B,m)-m(H)(|m|-m(H))=51 >
49=b_2(\B^H,m^H).
$$
Hence $(\B,m)$ is not free.

\end{example}

In particular, as an easy corollary, we have the following.

\begin{cor}
Let $\A$ be the Weyl arrangement of the type $A_3$. Assume that 
$(\A,m)$ is unbalanced. Then the freeness of $\A$ depends only 
on $L(\A,m)$, where $L(\A,m)$ consists of $L(\A)$ with the information 
$m(H)$ for each $H \in L_1(\A)$.
\label{A3}
\end{cor}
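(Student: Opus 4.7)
The plan is to apply Theorem~\ref{AYheavy}(1) to reduce the freeness of $(\A,m)$ to two numerical conditions on the Euler-Ziegler restriction, and then to check that both conditions are determined by $L(\A,m)$. Since the Weyl arrangement of type $A_3$ is essentially a $3$-arrangement, the ambient space of any heavy hyperplane $H_0$ is two-dimensional, so the Euler-Ziegler restriction $(\A^{H_0},m^{H_0})$ is a $2$-multiarrangement.

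I would first read off $\A^{H_0}$ from the $A_3$ combinatorics: of the five hyperplanes in $\A\setminus\{H_0\}$, four split into two pairs, each pair together with $H_0$ forming a triple at a codimension-two flat, while the fifth meets $H_0$ transversely. Thus $|\A^{H_0}|=3$, and the three multiplicities $m^{H_0}(X_i)$ are sums of certain $m(H)$ prescribed by $L(\A,m)$. Two standard facts now apply: every $2$-multiarrangement is free, so the freeness hypothesis on $(\A^{H_0},m^{H_0})$ in Theorem~\ref{AYheavy}(1) is automatic; and the exponents of a $2$-multiarrangement with three lines admit an explicit combinatorial formula in the three multiplicities, due to Wakamiko. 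Consequently $b_2(\A^{H_0},m^{H_0})=d_2 d_3$ depends only on $L(\A,m)$.

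To conclude I would verify that $b_2(\A,m)$ is also combinatorial in $L(\A,m)$: in type $A_3$ every localization $(\A_X,m_X)$ at a codimension-two flat $X$ has at most three hyperplanes, so its Euler multiplicity is determined by $L(\A,m)$, and hence so is the $L_2$-sum that computes $b_2(\A,m)$. Since $m_0$ and $|m|$ are obviously combinatorial, the criterion
\[
b_2(\A,m)-m_0(|m|-m_0)=b_2(\A^{H_0},m^{H_0})
\]
supplied by Theorem~\ref{AYheavy}(1) becomes a purely combinatorial equality, from which the corollary follows immediately.

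The main obstacle I anticipate is justifying the combinatoriality of $b_2(\A^{H_0},m^{H_0})$ and of the Euler multiplicities entering $b_2(\A,m)$; both rest on the three-line case of Wakamiko's exponent formula. If $A_3$ were replaced by a higher-rank Weyl arrangement, a single codimension-two flat could contain four or more hyperplanes and this step would no longer be automatic, which is why the corollary is genuinely tied to the rank-three setting.
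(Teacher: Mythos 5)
Your proposal is correct and follows essentially the same route as the paper: the paper proves a slightly more general statement (Corollary~\ref{Cor3Multi}, for rank-three multiarrangements all of whose rank-two localizations and whose restriction are either unbalanced or have at most three hyperplanes) and then specializes to $A_3$, using exactly your ingredients — Theorem~\ref{AYheavy}(1), automatic freeness in rank two, Wakamiko's exponent formula for up to three lines, and the local--global formula for $b_2$. The only cosmetic point is that what you need from each localization $(\A_X,m_X)$ is the combinatoriality of its exponents $e_1(X)e_2(X)=b_2(\A_X,m_X)$ rather than of its "Euler multiplicity," but that is precisely what Wakamiko's formula supplies.
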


Definition \ref{heavy} (4) is the simplest whose meaning has not yet been investigated.
This restriction also plays an interesting role for general multiarrangements as shown in the next 
definition and theorem.

\begin{definition}
Let $(\A,m)$ be a multiarrangment and fix $H \in \A$. Then define 
\begin{eqnarray*}
b_2^H(\A,m):&=&
b_2(\A,m)-m(H)(|m|-m(H)),\\
B_2^H(\A,m):&=&
\sum_{X \in L_2(\A) \setminus \A^H} b_2(\A_X,m_X).
\end{eqnarray*}
\label{reducedb2}
\end{definition}

\begin{theorem}\label{ineq2}
Let $(\A,m)$ be an $\ell$-multiarrangement and $H \in \A$. Then 
\[b_2^H(\A,m) \ge B^H_2(\A,m) \ge b_2(\A^H,m^H) \ge b_2(\A^H,m^*),\] 
where $m^*$ is the Euler multiplicity of $(\A,m)$ onto $H$ and $(\A^H,m^H)$
the multi-Ziegler restriction of $(\Am)$ onto $H$.
\end{theorem}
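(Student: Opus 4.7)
My plan rests on the local-to-global identity
\[b_2(\A,m) = \sum_{X \in L_2(\A)} b_2(\A_X,m_X),\]
which reduces every $b_2$-computation to the rank-two case, and I will handle the three inequalities separately.

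For the first inequality $b_2^H(\A,m) \ge B^H_2(\A,m)$, I would split $L_2(\A) = \A^H \sqcup (L_2(\A) \setminus \A^H)$ to obtain
\[b_2(\A,m) = \sum_{X \in \A^H} b_2(\A_X,m_X) + B^H_2(\A,m).\]
For each $X \in \A^H$ the localisation $(\A_X,m_X)$ is a rank-two multiarrangement containing $H$, and I would establish the rank-two bound $b_2(\A_X,m_X) \ge m(H)\, m^H(X)$. Writing the exponents as $(d_1,d_2)$, this is equivalent to $(d_1-m(H))(d_2-m(H)) \ge 0$, which follows because a Saito-determinant argument on $D(\A_X,m_X)$ forces the larger exponent to satisfy $d_2 \ge m(H)$, while in the remaining heavy case $2m(H) \ge |m_X|$ the exponents are exactly $\{m(H), m^H(X)\}$. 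Summing over $X \in \A^H$ and using $\sum_{X \in \A^H} m^H(X) = |m|-m(H)$ then yields the first inequality.

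For the third inequality, it is enough to verify $m^*(X) \le m^H(X)$ pointwise and invoke monotonicity of $b_2$. The Abe-Terao-Wakefield Euler multiplicity satisfies $m^*(X) = |m_X| - d_2(\A_X,m_X)$, and the rank-two bound $d_2 \ge m(H)$ above then gives $m^*(X) \le |m_X|-m(H) = m^H(X)$. Componentwise monotonicity of $b_2$ in the multiplicities follows from the local-to-global formula together with the rank-two fact that $d_1 d_2$ grows when any multiplicity is incremented by one, yielding $b_2(\A^H,m^*) \le b_2(\A^H,m^H)$.

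The second inequality $B^H_2(\A,m) \ge b_2(\A^H,m^H)$ is the main obstacle. The assignment $\phi\colon L_2(\A) \setminus \A^H \to L_2(\A^H)$, $X \mapsto X \cap H$, is well-defined and surjective, with fibres $\phi^{-1}(Y) = \{X \in L_2(\A_Y) : X \not\subset H\}$. Grouping $B^H_2(\A,m)$ by these fibres, applying the local-to-global formula to $\A^H$, and using the compatibility $(\A^H)_Y = (\A_Y)^H$ of the multi-Ziegler restriction with localisation reduces the inequality fibrewise to its rank-three instance: for every rank-three multiarrangement $\B$ containing $H$,
\[\sum_{X \in L_2(\B) \setminus \B^H} b_2(\B_X,m_X) \ge b_2(\B^H,m^H).\]
Projectivising to $\mathbb{P}^2$ with $L_0 = \mathbb{P}(H)$ and using the combinatorial identity $\sum_{j_1<j_2} N_{j_1} N_{j_2} = \sum_{\{i,j\}\colon L_i \cap L_j \not\subset L_0} n_i n_j$ (where $N_j$ are the $\B^H$-multiplicities), both sides are majorised by the same pair-sum. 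Introducing the non-negative rank-two defect $Q(\A,m) := \sum_{i<j} m_i m_j - b_2(\A,m)$, the rank-three inequality becomes equivalent to the defect inequality
\[Q(\B^H,m^H) \ge \sum_p Q(\B_p,m_p),\]
where $p$ ranges over multi-intersection points of $\B$ outside $L_0$. This defect inequality is the principal difficulty; I would attempt it by induction on the cardinality of $\B$, analysing how the addition of one hyperplane $H_k$ affects both $Q(\B^H,m^H)$ (through the multi-Ziegler multiplicity it contributes on $L_0$) and the local defects $Q(\B_p,m_p)$ (through new incidences off $L_0$), and arguing that the Ziegler-side growth dominates uniformly.
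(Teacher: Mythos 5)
Your treatment of the first and third inequalities matches the paper's: the first is the local--global decomposition plus the rank-two bound $b_2(\A_X,m_X)\ge m(H)(|m_X|-m(H))$ (the paper's Lemma~\ref{LemIneq2}), and the third is the pointwise comparison $m^*(X)\le m^H(X)$ together with monotonicity of $b_2$ under increasing multiplicities (Lemma~\ref{LemIneq3}). One small slip in step (i): in the non-heavy rank-two case you need the \emph{smaller} exponent to satisfy $d_1\ge m(H)$, so that both factors of $(d_1-m(H))(d_2-m(H))$ are nonnegative; bounding only $d_2$ from below does not determine the sign of the product. This is easily repaired (the paper gets $m(H)\le d_1\le d_2\le |m|-m(H)$ by deforming to the unbalanced case and applying Lemma~\ref{LemIneq3}), so it is not the main issue.

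The genuine gap is in the second inequality. Your fibrewise reduction over $X\mapsto X\cap H$ to a rank-three statement is exactly the paper's first step (inequality~\eqref{Eq17} in the proof of Theorem~\ref{AYheavy}), and your reformulation as the defect inequality $Q(\B^H,m^H)\ge\sum_p Q(\B_p,m_p)$ is a correct rearrangement. But you then leave the rank-three case unproven: ``I would attempt it by induction on the cardinality of $\B$'' is a plan, not an argument, and it is unlikely to go through as stated, because the local defects $Q(\B_p,m_p)$ and the quantity $b_2(\B^H,m^H)$ are governed by exponents of rank-two multiarrangements with four or more points, which are not combinatorial invariants; their change under adding a hyperplane is controlled only by Lemma~\ref{LemIneq3}, which does not say \emph{which} exponent increases. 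The paper avoids this entirely by two observations you do not use: (a) both $B_2^H(\A,m)$ and $b_2(\A^H,m^H)$ are independent of $m(H)$ (no flat in $L_2(\A)\setminus\A^H$ lies in $H$, and $m^H$ omits $m(H)$), so one may inflate $m(H)$ until $H$ is heavy, in which case $B_2^H(\A,m)=b_2^H(\A,m)$; and (b) in the heavy rank-three case the inequality $b_2^H(\A,m)\ge b_2(\A^H,m^H)$ is proved algebraically (Proposition~\ref{Prop4}): the good summand $\theta_0$ makes the contraction complex acyclic, and a Hilbert-series computation identifies $b_2^H(\A,m)-b_2(\A^H,m^H)$ with $\dim\coker\res^1_{H}\ge 0$. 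This Yoshinaga-type argument is the actual content of the second inequality; without it, or a worked-out substitute for your defect inequality, the proof is incomplete.
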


We will show the above results by using a special derivation, 
which plays a similar role as the Euler derivation for unbalanced multiarrangements. 
In other words, 
we can apply several arguments valid for simple arrangements in \cite{AY2}, \cite{Y}, 
\cite{Y2}, \cite{Z} and so on to unbalanced multiarrangements with slight modifications. In 
this paper we include all the proofs based on them for the completeness.

With Theorem~\ref{AYheavy} we can prove the following theorem.

\begin{theorem}\label{heavyflag}
Let $\A$ be an arrangement in $V=\K^\ell$ and
let $\{X_i\}_{i=0}^\ell$ be a flag of $\A$ such that 
$X_{i+1} \in \A^{X_i}$ is heavy in $(\A^{X_i},m^{X_i})$ for 
$i=0,\ldots,\ell-1$.
Then $\A$ is free with
$\exp(\A)=(m^{X_0}(X_{1}),m^{X_1}(X_{2}),\ldots,
m^{X_{\ell-1}}(X_{\ell}))$ if and only if 
\begin{equation}\label{flagEq}
b_2(\A)=\sum_{0 \le i< j \le \ell-1}m^{X_i}(X_{i+1}) m^{X_j}(X_{j+1})
\end{equation}
\end{theorem}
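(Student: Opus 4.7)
The plan is to iterate Theorem~\ref{AYheavy}(1) along the flag. Inductively set $(\A_0, m_0) := (\A, \mathbf{1})$, and for $1 \le i \le \ell - 1$ let $(\A_i, m_i) := (\A^{X_i}, m^{X_i})$ denote the Euler-Ziegler restriction of $(\A_{i-1}, m_{i-1})$ onto $X_i$. Abbreviate $a_i := m^{X_i}(X_{i+1})$ for the candidate exponents. By the heaviness hypothesis at each step, Theorem~\ref{AYheavy}(1) applies to $(\A_i, m_i)$ with heavy hyperplane $X_{i+1}$ for $0 \le i \le \ell - 2$, giving
\[
b_2(\A_i, m_i) - a_i\bigl(|m_i| - a_i\bigr) \ge b_2(\A_{i+1}, m_{i+1}),
\]
with equality if and only if the freeness of $(\A_i, m_i)$ is equivalent to that of $(\A_{i+1}, m_{i+1})$, in which case $\exp(\A_i, m_i) = (a_i, \exp(\A_{i+1}, m_{i+1}))$.

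The next step is to telescope these inequalities. The defining identity $|m^H| = |m| - m(H)$ for Euler-Ziegler restrictions yields $|m_i| - a_i = |m_{i+1}|$, and inductively $|m_i| = \sum_{j = i}^{\ell - 1} a_j$, where the base case $|m_{\ell-1}| = a_{\ell-1}$ asserts that the one-dimensional multiarrangement $(\A^{X_{\ell - 1}}, m^{X_{\ell - 1}})$ is concentrated at the single point $X_\ell$---a combinatorial consequence of $\{X_i\}$ being a complete flag. Since $b_2(\A_{\ell - 1}, m_{\ell - 1}) = 0$ (every one-dimensional multiarrangement is free with vanishing $b_2$), summing the telescoped inequalities delivers
\[
b_2(\A) \;\ge\; \sum_{i=0}^{\ell-2} a_i |m_{i+1}| \;=\; \sum_{i=0}^{\ell-2} a_i \sum_{j=i+1}^{\ell-1} a_j \;=\; \sum_{0 \le i < j \le \ell-1} a_i a_j,
\]
which is exactly the right-hand side of~\eqref{flagEq}.

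For the iff statement, the forward direction (freeness of $\A$ with the claimed exponents implies~\eqref{flagEq}) is the classical identity $b_2(\A) = e_2(\exp(\A))$ valid for any free arrangement. Conversely, if~\eqref{flagEq} holds, then the cumulative inequality above is actually an equality, which forces equality at every telescoped step. Repeated application of Theorem~\ref{AYheavy}(1) then transfers freeness from level $i+1$ to level $i$; combining this with the automatic freeness of the base case $(\A_{\ell-1}, m_{\ell-1})$ and working upward yields freeness of $\A = (\A_0, m_0)$ with exponents $(a_0, a_1, \ldots, a_{\ell-1})$ prepended one at a time.

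The main obstacle I expect is justifying the innermost identification $|m_{\ell-1}| = a_{\ell-1}$ cleanly: this is the combinatorial fact that makes the cumulative inequality collapse exactly into the elementary symmetric polynomial $\sum_{i<j} a_i a_j$, and it relies on the complete-flag structure (forcing every hyperplane of $(\A_{\ell-2}, m_{\ell-2})$ other than $X_{\ell-1}$ to meet the line $X_{\ell-1}$ at the single point $X_\ell$) rather than on the heaviness conditions themselves. Once this identification is established, the remainder of the proof is a mechanical telescoping plus the iterative freeness transfer from Theorem~\ref{AYheavy}(1).
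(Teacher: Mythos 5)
Your overall architecture is the same as the paper's: telescope the inequality $b_2(\A_i,m_i)-a_i(|m_i|-a_i)\ge b_2(\A_{i+1},m_{i+1})$ down the flag, observe that \eqref{flagEq} forces equality at every level, and then propagate freeness upward from the automatically free low-rank base case, prepending one exponent per level. The bookkeeping $|m_{i+1}|=|m_i|-a_i$ and the collapse into $\sum_{i<j}a_ia_j$ are correct, and the point you worry about ($|m_{\ell-1}|=a_{\ell-1}$) is immediate: the restriction of a rank-two multiarrangement onto one of its hyperplanes is a single point carrying the total remaining multiplicity, so there is no real obstacle there.

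There is, however, one genuine gap: the very first rung of the ladder. You invoke Theorem~\ref{AYheavy}(1) for $(\A_0,m_0)=(\A,\mathbf{1})$ with heavy hyperplane $X_1$, citing ``the heaviness hypothesis at each step.'' But heaviness of $X_1$ in the simple arrangement means $2m^{X_0}(X_1)=2\ge|\A|$, which fails for every arrangement with at least three hyperplanes; the paper's own examples of heavy flags have $m^V(H)=1$ with many hyperplanes, so the level-zero condition cannot be read literally, and Theorem~\ref{AYheavy}(1) is simply not applicable at that level. The paper handles the first step with Theorem~\ref{PreTh5} (the Abe--Yoshinaga criterion for a simple arrangement and its Ziegler restriction), which needs no heaviness because the Euler derivation plays the role of the good summand there; it yields exactly your first inequality $b_2(\A)-1\cdot(|\A|-1)\ge b_2(\A^{X_1},m^{X_1})$ together with the freeness equivalence, after which Theorem~\ref{AYheavy}(1) takes over for $i\ge 1$, where the restrictions genuinely are unbalanced. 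Replacing your level-zero citation accordingly repairs the argument. A minor further point: your paraphrase ``equality holds if and only if the freeness of $(\A_i,m_i)$ is equivalent to that of $(\A_{i+1},m_{i+1})$'' is not what Theorem~\ref{AYheavy}(1) asserts (it says the top is free iff the restriction is free \emph{and} equality holds); your actual use of it in the chaining is the correct one, but the statement as written should be fixed.
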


Let us call the flag in Theorem \ref{heavyflag} a 
\textbf{heavy flag}. Then Theorem \ref{heavyflag}
immediately implies the following corollary which shows 
Terao's conjecture in the class of arrangements that admit a heavy flag:

\begin{cor}
Let $HF_\ell$ be the set of hyperplane arrangements in 
$V=\K^\ell$ such that 
every $\A \in HF_\ell$ has a heavy flag. Then the freeness of 
$\A \in HF$ 
depends only on combinatorics. 
In this case, this flag gives both a
supersolvable filatration and a divisional flag as in \cite{A2}.
\label{combin}
\end{cor}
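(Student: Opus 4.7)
The plan is to derive the corollary directly from Theorem~\ref{heavyflag} by exhibiting both sides of \eqref{flagEq} as combinatorial invariants, and then iterating Theorem~\ref{AYheavy} to recover the structural assertions.

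First, I would verify that equation \eqref{flagEq} is a purely combinatorial condition on the pair $(L(\A),\{X_i\})$. The left-hand side $b_2(\A)$ is a standard invariant of $L(\A)$. On the right-hand side, the iterated multiplicities $m^{X_i}(X_{i+1})$ unwind, via Definition~\ref{heavy}~(4) applied inductively to the simple arrangement $\A$, into counts of hyperplanes of $\A$ lying in prescribed elements of the intersection lattice, which is lattice-theoretic data only. Hence the validity of \eqref{flagEq} together with the proposed exponent tuple $(m^{X_0}(X_1),\ldots,m^{X_{\ell-1}}(X_\ell))$ is determined entirely by $L(\A)$, and the existence of a heavy flag is itself a combinatorial property. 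Theorem~\ref{heavyflag} then converts this into a combinatorial criterion for freeness of any $\A\in HF_\ell$, establishing Terao's conjecture on $HF_\ell$.

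Next, for the divisional flag assertion in the sense of \cite{A2}, I would iterate Theorem~\ref{AYheavy}~(1) along the flag from the innermost flat $X_\ell$ outward. At each layer, freeness of $(\A^{X_{i+1}},m^{X_{i+1}})$ combined with the heaviness of $X_{i+1}$ inside $(\A^{X_i},m^{X_i})$ yields freeness of $(\A^{X_i},m^{X_i})$ with one additional exponent $m^{X_i}(X_{i+1})$. Translated to characteristic polynomials this produces the divisibility chain $\chi(\A^{X_{i+1}};t)\mid\chi(\A^{X_i};t)$ with linear quotient $t-m^{X_i}(X_{i+1})$ at each level, which is exactly the divisional flag condition of \cite{A2}.

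Finally, for the supersolvable filtration, I would translate the heaviness inequality $2m^{X_i}(X_{i+1})\ge|m^{X_i}|$ into modularity of $X_{i+1}$ as a flat of the simple arrangement $\A^{X_i}$, using the 2-dimensional Ziegler picture at the bottom and propagating upward through the freeness chain obtained above. The resulting nested sequence $\{X_i\}$ then becomes a modular flag in Stanley's sense, equivalent to a supersolvable filtration of $\A$. The main obstacle I anticipate is precisely this last translation from multi-arrangement heaviness to lattice-theoretic modularity of the flats; the first two parts are essentially bookkeeping on top of Theorems~\ref{AYheavy} and~\ref{heavyflag}.
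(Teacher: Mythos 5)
Your first step --- that both sides of \eqref{flagEq}, the heaviness condition itself, and the proposed exponents are all read off from $L(\A)$ --- is exactly the paper's argument, and that part is fine. The genuine gap is in your treatment of supersolvability. You propose to ``translate the heaviness inequality $2m^{X_i}(X_{i+1})\ge|m^{X_i}|$ into modularity of $X_{i+1}$,'' but heaviness alone cannot imply modularity: the paper's own example in $\K^4$ (with $b_2(\A)=99>96$) has a heavy flag yet is non-free, hence not supersolvable, so no such translation exists. What actually forces the supersolvable filtration is the \emph{equality} in \eqref{flagEq}, not the heaviness of the flag. The paper disposes of this by citing \cite{A3}, Proposition 4.2 (equality in the flag $b_2$-inequality is equivalent to supersolvability with those exponents); a self-contained version of the needed argument appears in Theorem~\ref{heavycor}, where one localizes $b_2$ at the rank-two flats not contained in $X_1$, bounds each local contribution below by Lemma~\ref{LemIneq2}, and uses the equality to conclude that every intersection of two hyperplanes in the same filtration layer already lies in a hyperplane of a lower layer (the $L^{away}=L^{diff}$ counting). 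You correctly identified this translation as the obstacle, but you have no argument for it, and the direction you suggest (heaviness $\Rightarrow$ modularity) is false.

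A smaller issue: for the divisional flag you derive divisibility of the characteristic polynomials of the \emph{multi}-Ziegler restrictions $(\A^{X_i},m^{X_i})$ from iterating Theorem~\ref{AYheavy}, but the divisional flag of \cite{A2} concerns the \emph{simple} restrictions $\A^{X_i}$ and their (combinatorial) characteristic polynomials; these are different objects. Once supersolvability along the flag is established, the divisional flag follows immediately (modular flats give division), so the right order of argument is supersolvability first, divisionality as a consequence --- not the other way around.
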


Note that Terao showed in~\cite{Ttalk} for a 3-arrangement $\A$ with a heavy flag that~\eqref{flagEq} is equivalent to $\A$ being supersolvable. Together with~\cite{Y} this implies Corollary~\ref{combin} for the case $\ell=3$. Corollary \ref{combin} generalizes it to the case of arbitrary dimension.

\begin{example}
Let $\A$ in $V=\K^4$ be defined by
$$
xy(x-y)(y-z)(x-z)(x+y+z)w(y-w)(y+w)\prod_{k=-3}^4(z-kw)=0,
$$
Then, $\A$ has the heavy flag $H:=\{w=0\} \in \A$, $L:=\{w=z=0\} \in (\A^H,m^H)$ and $K:=\{w=z=y=0\} \in (\A^L,m^L)$, with 
$m^V(H)=1$, $m^H(L)=8$, $m^L(K)=4$ and $m^K(0)=4$. 
So the RHS of~\eqref{flagEq} evaluates to $1(8+4+4)+8(4+4)+4\cdot4=96$. 
We can compute $b_2(\A)=99$.
Therefore, Theorem~\ref{heavyflag} implies that $\A$ is non-free.
\end{example}

\begin{rem}
Without the assumption on the heaviness, we can show the following:

for an arbitrary flag $\{X_i\}_{i=0}^\ell$ of $\A$.
$$
b_2(\A) \ge \sum_{0 \le i < j \le \ell-1}
m^{X_i}(X_{i+1})m^{X_j}(X_{j+1}).
$$
The equality holds if and only if $\A$ is supersolvable (hence free) with exponents
$(m^{X_0}(X_{1}),m^{X_1}(X_{2}),\ldots,
m^{X_{\ell-1}}(X_{\ell}))$.

For the proof, see \cite{A3}, Proposition 4.2 for example. However, in this case, 
$\A$ can be free even when the equality does not hold.  For example, the Weyl arrangement of the
type $D_\ell\ (\ell \ge 4)$ is free, but not supersolvable. Hence this gives an example of free arrangements which do not satisfy the $b_2$-equality above.
\end{rem}

The organization of this article is as follows. In \S2 we introduce several results used for the proof of results in \S1. In \S3 we prove Theorem~\ref{AYheavy}. 
In \S4 and \S5, we give several applications of our main result related to the freeness of simple arrangements and multiarrangements, and the combinatorial dependence of freeness.
Lemma~\ref{Le1}, Theorem~\ref{Th1} and Proposition~\ref{Prop1} are the main results of the second author's bachelor degree thesis~\cite{K}, completed at the University of Kaiserslautern under the supervision of Mathias Schulze. Their proofs are provided in this paper for the completeness.
\medskip

\textbf{Acknowledgements}.  
The first author is partially supported by JSPS Grant-in-Aid for Young 
Scientists (B) No. 24740012.
The main part of this work was done during a research stay of the second author at Kyoto University. He wishes to thank the German National Academic Foundation for funding this stay and Professor Mathias Schulze for the supervision of his Bachelor thesis at the University of Kaiserslautern.

\section{Preliminaries}

In this section we fix some notations and introduce some known results, which will be used later.

Let $V$ be a vector space of dimension $\ell$ over a field $\K$ and $S=S(V^*)$ be the symmetric algebra. We can choose coordinates ${x_1,\ldots,x_\ell}$ for $V^*$ such that $S=\K[x_1,\ldots,x_\ell]$. A hyperplane $H$ in $V$ is a linear subspace of codimension 1. A (central) \textbf{arrangement of hyperplanes} $\A$ is a finite collection of hyperplanes. This article is mainly concerned with \textbf{multiarrangements}, which are defined to be an arrangement of hyperplanes $\A$ with a multiplicity function $m : \A \rightarrow \Z_{>0}$. Multiarrangements were first defined by Ziegler in~\cite{Z} and are denoted by $\Am$.
Define $|m|=\sum_{H\in\A} m(H)$. A multiarrangement $\Am$ with $m(H)=1$ for all $H\in\A$ is sometimes called a simple hyperplane arrangement.
For each hyperplane $H$ we can choose a linear defining equation $\alpha_H \in S$. Then $\Q \Am=\prod_{H\in \A}\alpha_H^{m(H)}$ is the \textbf{defining polynomial} of a multiarrangement $\Am$.
For $L\in\A$ we define the \textbf{characteristic multiplicity} of $L$ to be $\delta_L(H)=1$ only if $H=L$ and 0 otherwise.

For an arrangement $\A$ the set of all non-empty intersections of elements of $\A$ is defined to be the \textbf{intersection lattice} $L(\A)$, i.e., 
$$
L(\A):=\{\cap_{H \in \B} H \mid \B \subset \A\}.
$$
It is ordered by reverse inclusion and ranked by the codimension. 
Denote by $L_r(\A)=\lbrace X\in L(\A) \mid \codim(X)=r \rbrace$ the set of $X\in L(\A)$ of codimension $r$.
For any $X\subset V$ let $\A_X = \lbrace  H \in \A \mid X\subseteq H\rbrace$ and $m_X=m|_{\A_X}$ be the \textbf{localization} of a multiarrangement $\Am$ at $X$.

For $p\geq 1$, the $S$-module $\Der^p (S)$ is the set of all alternating $p$-linear forms $\theta : S^p \to S$ 
such that $\theta$ is a $\K$-derivation in each variable. For $p=0$ one defines $\Der^0(S):=S$ and one also writes $\Der(S) :=\Der^1(S)$. Since $\Der (S)$ is a free $S$-module, 
it holds that $\Der^p (S)=\wedge^p \Der (S)$.
A derivation $\theta \in \Der^p (S)$ is called homogeneous of polynomial degree $p$ if $\theta(f_1,...,f_p)$ is zero or a polynomial of degree $p$ for all $f_1,...,f_p\in V^*$. It is denoted by $\pdeg \theta = p$.
The \textbf{logarithmic derivation modules} of $\Am$ are defined as
\[D^p \Am :=\lbrace \theta\in \Der^p(S) \mid \theta(\alpha_H,f_2,...,f_p) \in \alpha_H^{m(H)} S  \mbox{ for all }H\in \A \mbox{ and } f_2,...,f_p\in S\rbrace,\]
and $D\Am := D^1\Am$. If $D\Am$ is a free $S$-module, we call $\Am$ a free multiarrangement. 
The fact that the $S$-module $D\Am$ is reflexive implies the following proposition.

\begin{prop}\label{PrePro1}\cite[Theorem 5]{Z}
A multiarrangement $\Am$ is free whenever $r(\A) := \mbox{codim}_V(\bigcap_{H\in\A}H)\le 2$.

\end{prop}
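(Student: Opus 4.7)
The plan is to split according to the rank $r := r(\A) \in \{0, 1, 2\}$. When $r = 0$ the arrangement is empty and $D(\A, m) = \Der(S)$ is manifestly free of rank $\ell$. When $r = 1$ every element of $\A$ must coincide with the common $(\ell-1)$-dimensional intersection, so $\A = \{H\}$ with multiplicity $k := m(H)$; choosing coordinates so that $\alpha_H = x_1$, the derivations $x_1^k \partial_1, \partial_2, \ldots, \partial_\ell$ form an obvious $S$-basis of $D(\A,m)$.

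The substantive case is $r = 2$. I would pick coordinates so that $\bigcap_{H \in \A} H = \{x_1 = x_2 = 0\}$, so that every $\alpha_H$ lies in $R := \K[x_1, x_2]$. Since the condition $\theta(\alpha_H) \in \alpha_H^{m(H)} S$ constrains only the $\partial_1$- and $\partial_2$-coefficients of $\theta$, expanding a general $\theta = \sum f_i \partial_i$ in the monomial basis of $S$ over $R$ yields the splitting
$$D(\A, m) \;\cong\; (D_0 \otimes_R S) \;\oplus\; \bigoplus_{i=3}^\ell S \partial_i,$$
where $D_0 := \{\theta \in \Der_\K(R) : \theta(\alpha_H) \in \alpha_H^{m(H)} R \text{ for all } H \in \A\}$ is the logarithmic module of the associated 2-multiarrangement over $R$. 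The problem reduces to proving that $D_0$ is free over $R$.

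For this final step I would show that $D_0$ is reflexive. For each $H$ the short exact sequence $0 \to D_H \to \Der_\K(R) \to R/\alpha_H^{m(H)} R \to 0$, where $D_H := \{\theta : \theta(\alpha_H) \in \alpha_H^{m(H)} R\}$, combined with $\depth_R(R/\alpha_H^{m(H)} R) = 1$ and $\depth_R(\Der_\K(R)) = 2$, gives $\depth_R(D_H) \ge 2$ by the depth lemma; the intersection $D_0 = \bigcap_H D_H$ inside the free module $\Der_\K(R)$ inherits $\depth_R(D_0) \ge 2$. Auslander--Buchsbaum then forces $\pd_R(D_0) = 2 - \depth_R(D_0) = 0$, and since finitely generated graded projective modules over $R = \K[x_1, x_2]$ are free, so is $D_0$.

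The main obstacle is the depth argument for the intersection $D_0 = \bigcap_H D_H$, which is standard but requires a careful local check; alternatively, one can bypass the commutative-algebra route entirely by producing two homogeneous derivations $\theta_1, \theta_2 \in D_0$ of complementary degrees whose Saito determinant equals $\prod_H \alpha_H^{m(H)}$ up to a nonzero scalar and then invoking Saito's criterion directly, though such an explicit construction for general multiarrangements is itself delicate.
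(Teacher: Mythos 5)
Your proof is correct and takes essentially the same route as the paper, which offers no argument beyond citing Ziegler's Theorem 5 and noting that reflexivity of $D(\A,m)$ gives the claim: after splitting off the inessential variables, one shows the rank-two logarithmic module has depth two over $R=\K[x_1,x_2]$ and is therefore free. The one step you flag (depth of the intersection $D_0=\bigcap_H D_H$) closes at once by writing $D_0$ as the kernel of $\Der_\K(R)\to\bigoplus_H R/\alpha_H^{m(H)}R$, whose image is nonzero only with height-one associated primes and hence has depth at least one, so the depth lemma gives $\depth D_0\ge 2$.
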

If $\Am$ is free, one can choose a homogenous basis $\theta_1 , \ldots , \theta_\ell$ of $D\Am$ and define $\exp \Am =(\pdeg \theta_1 , \ldots, \pdeg \theta_\ell )$ to be the \textbf{exponents} of $\Am$.
For $\theta_1,...,\theta_\ell\in D\Am$ an $(\ell \times \ell)$-matrix $M(\theta_1,...,\theta_\ell)$ can be defined by setting the $(i,j)$-th entry to be $\theta_j(x_i)$. 
The determinant of this matrix gives a useful criterion to decide the freeness of a multiarrangement via the following theorem, first proved by K. Saito for simple arrangements~\cite[Theorem 1.8]{Sa} and by Ziegler for multiarrangements~\cite[Theorem 8]{Z}.

\begin{theorem}\label{PreTh2}
\textbf{(Saito's criterion)} Let  $\theta_1,...,\theta_\ell$ be derivations in $D\Am$. Then there exists some $f\in S$ such that
\[ \det M(\theta_1,...,\theta_\ell) = f \Q\Am.\] 
Furthermore, $\lbrace\theta_1,...,\theta_\ell\rbrace$ forms a basis of $D\Am$ if and only if $f \in \K^* $.
In particular, if $\theta_1,...,\theta_\ell$ are all homogeneous, then $\lbrace\theta_1,...,\theta_\ell\rbrace$ forms a basis of $D\Am$ if and only if the following two conditions are satisfied:
\begin{itemize}
\item[(i)] $\theta_1,...,\theta_\ell$ are independent over $S$.
\item[(ii)] $\sum_{i=1}^\ell \pdeg \theta_i =|m|$.
\end{itemize}  
\end{theorem}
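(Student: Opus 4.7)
The plan is to prove Saito's criterion in three stages: first establish the divisibility $\Q\Am \mid \det M(\theta_1,\ldots,\theta_\ell)$, then show that when the quotient $f$ is a unit the $\theta_i$ form a basis, and finally deduce the homogeneous characterization from these pieces.

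For the divisibility, I would fix a hyperplane $H \in \A$ and, after a linear change of coordinates, assume $\alpha_H = x_1$. Because each $\theta_j \in D\Am$ satisfies $\theta_j(x_1) \in x_1^{m(H)} S$, the entire first row of $M(\theta_1,\ldots,\theta_\ell)$ is divisible by $x_1^{m(H)}$, so $\alpha_H^{m(H)}$ divides $\det M$. Since the $\alpha_H^{m(H)}$ for distinct $H \in \A$ are pairwise coprime in the UFD $S$, their product $\Q\Am$ divides $\det M$, yielding $\det M = f\,\Q\Am$ for some $f \in S$.

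For the second stage, I assume $f \in \K^*$ and show that $\{\theta_1,\ldots,\theta_\ell\}$ is a basis of $D\Am$. Since $\det M \ne 0$, the $\theta_i$ are $S$-linearly independent. Given any $\eta \in D\Am$, Cramer's rule over the fraction field $\K(x_1,\ldots,x_\ell)$ writes $\eta = \sum_{i=1}^\ell g_i \theta_i$ with $g_i = \det M_i / \det M$, where $M_i$ is obtained from $M$ by substituting the column of $\eta$ into column $i$. The columns of $M_i$ are all derivations in $D\Am$, so the first stage applied to $M_i$ gives $\Q\Am \mid \det M_i$. Because $\det M = f\,\Q\Am$ with $f \in \K^*$, we conclude $g_i \in S$, so $\eta$ lies in the $S$-span, completing the proof that $\{\theta_i\}$ is a basis.

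For the homogeneous case, the $(i,j)$-entry $\theta_j(x_i)$ is homogeneous of degree $\pdeg \theta_j$, hence $\det M$ is homogeneous of degree $\sum_i \pdeg \theta_i$, while $\Q\Am$ has degree $|m|$. If $\{\theta_i\}$ is a basis, then $f \in \K^*$ forces $\sum \pdeg \theta_i = |m|$, and independence is automatic. Conversely, assuming (i) and (ii), independence yields $\det M \ne 0$, so $\det M = f\,\Q\Am$ with $f$ homogeneous of degree $\sum \pdeg \theta_i - |m| = 0$, i.e., $f \in \K^*$, and the previous stage applies. The main point to handle carefully is the Cramer's rule argument: one must notice that the perturbed matrices $M_i$ still have all columns lying in $D\Am$, which is exactly what licenses reapplying the divisibility lemma and concluding $g_i \in S$ rather than only in the fraction field.
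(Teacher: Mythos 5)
Your divisibility argument and your Cramer's-rule argument are both correct and are the standard route (the paper itself does not prove this theorem; it quotes it from Saito and Ziegler, so there is no internal proof to compare against). However, you have only proved one direction of the central equivalence: you show that $f\in\K^*$ implies that $\{\theta_1,\dots,\theta_\ell\}$ is a basis, but nowhere do you prove the converse, that a basis forces $f\in\K^*$. You then invoke exactly this unproven converse in the homogeneous case, when you write ``If $\{\theta_i\}$ is a basis, then $f\in\K^*$ forces $\sum\pdeg\theta_i=|m|$.'' From a basis you only get $\det M\neq 0$, hence $f\neq 0$ and $\deg\det M\ge |m|$; concluding $\sum_i\pdeg\theta_i=|m|$ is equivalent to $f\in\K^*$ and cannot be obtained by the degree count alone. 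This is the genuinely harder half of Saito's criterion, and it is the half the paper actually uses (e.g.\ in the proof of Theorem \ref{Th1}, both $\det M=\Q\Am$ and $\sum_i\pdeg\theta_i=|m|$ are extracted from the assumption that the $\theta_i$ form a basis).

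To close the gap one needs an extra idea. A standard route: since $\Q\Am\,\partial_{x_j}\in D\Am$ for each $j$, writing these in terms of the basis gives $\det M\mid\det M(\Q\Am\partial_{x_1},\dots,\Q\Am\partial_{x_\ell})=\Q\Am^\ell$, so $f\mid\Q\Am^{\ell-1}$ and hence $f$ is, up to a unit, a product of the $\alpha_H$. One must then rule out $\alpha_H\mid f$ for each $H$, e.g.\ by comparing $\{\theta_i\}$ with the explicit free basis $\alpha_H^{m(H)}\partial_{x_1},\partial_{x_2},\dots,\partial_{x_\ell}$ of $D(\{H\},m(H))\supseteq D\Am$ (in coordinates with $\alpha_H=x_1$): if the transition determinant were divisible by $\alpha_H$, a dependence modulo $\alpha_H$ would produce $\theta\in D\Am$ with $\theta=\alpha_H\theta'$, $\theta'\in D\Am$, contradicting that the coefficients of $\theta$ in the basis are not all divisible by $\alpha_H$. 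Without some argument of this kind the ``only if'' statements in the theorem remain unproven.
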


One of the most important invariants of a (multi-) arrangement is its characteristic polynomial. It was first defined for simple arrangements combinatorially. We only use the following algebraic characterization, which is shown in~\cite{ST} for simple arrangements and in~\cite{ATW} for multiarrangements.
For a multiarrangement $\Am$, we define a function in $x$ and $t$.
\[\psi(\A,m;x,t):= \sum_{p=0}^\ell H(D^p(\A,m),x)(t(x-1)-1)^p ,\]
where $H(D^p(\A,m),x)$ is the Hilbert series of the graded $S$-module $D^p\Am$. 
Although $\psi(\Am;x,t)$ is a priori a rational function with poles along $x=1$, it is shown in~\cite[Theorem 2.5]{ATW} (see also \cite{ST}) that in fact $\psi(\A,m;x,t)$ is a polynomial in 
$x$ and $t$, and we can define the following.

\begin{definition}\cite[Definition 2,1]{ATW} 
A \textbf{characteristic polynomial} of a multiarrangement $\Am$ is defined as
\[\chi (\A,m;t) = \lim_{x\rightarrow 1 } (-1)^\ell \psi(\A,m;x,t). \]
Define the \textbf{Betti numbers} $b_i\Am \in \Z_{\ge 0}$ of $\Am$ by
\[\chi(\A,m;t) = t^\ell -b_1\Am t^{\ell-1} +b_2\Am t^{\ell-2}- \cdots + (-1)^\ell b_\ell \Am.\]
\end{definition}

Also in~\cite{ATW}, the following local-global formula is shown:

\begin{prop}\cite[Theorem 3.3]{ATW}\label{PrePro2}
Let $\Am$ be a multiarrangement. Then
\[b_k \Am= \sum_{X\in L_k(\A) }b_k(\A_X,m_X).\]
\end{prop}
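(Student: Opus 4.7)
The plan is to prove this local-global formula via the algebraic definition of $\chi(\A,m;t)$, comparing Hilbert series of logarithmic derivation modules globally and locally. The central idea is that the $k$-th Betti number, being a specific Laurent coefficient at $x=1$ of the generating function $\psi(\A,m;x,t)$, should be sensitive only to the codimension-$k$ strata of the intersection lattice $L(\A)$.

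First, I would analyze the localization of $D^p(\A,m)$ at flats. For $X \in L_k(\A)$, there is a natural inclusion $D^p(\A,m) \hookrightarrow D^p(\A_X, m_X)$, since any derivation preserving $\alpha_H^{m(H)}S$ for all $H \in \A$ preserves it for all $H \in \A_X \subseteq \A$. Localizing at the generic point of $X$, this inclusion becomes an isomorphism, because only hyperplanes in $\A_X$ impose nontrivial conditions near $X$. This gives a natural sheaf-theoretic structure relating $D^p(\A,m)$ to the family of local modules indexed by $L(\A)$, and yields an expression for the Hilbert series $H(D^p(\A,m);x)$ in terms of local Hilbert series up to corrections supported on higher-codimension strata.

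Second, I would expand $\psi(\A,m;x,t) = \sum_p H(D^p(\A,m);x)(t(x-1)-1)^p$ as a Laurent series in $(x-1)$ at $x=1$. The pole of $H(D^p(\A,m);x)$ at $x=1$ has order $\ell$, reflecting the Krull dimension of $S$, while the factor $(t(x-1)-1)^p$ contributes zeros of increasing order. Matching these, the coefficient of $t^{\ell-k}$ in $\chi(\A,m;t)=\lim_{x\to 1}(-1)^\ell\psi(\A,m;x,t)$ isolates a specific Laurent datum of the Hilbert series, which is precisely the piece that decomposes additively over codimension-$k$ strata; higher-codimension contributions enter with higher-order zeros at $x=1$ and vanish in the limit at the relevant order in $t$.

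The main obstacle is to rigorously justify this cancellation, namely that the $t^{\ell-k}$-coefficient equals exactly $\sum_{X\in L_k(\A)}b_k(\A_X, m_X)$ with no cross-contribution from flats of codimension $\ne k$. I expect this to follow either from a Möbius-type inversion on $L(\A)$, generalizing the classical simple-arrangement argument where $b_k = \sum_{X\in L_k(\A)}|\mu(\hat{0},X)|$, or from a \v{C}ech-type spectral sequence applied to the sheafifications $\widetilde{D^p(\A,m)}$ on $\P(V)$ induced by the stratification. Specialized to each $X \in L_k(\A)$ separately, the identity reduces to the tautological definition of $b_k$ for the rank-$k$ multiarrangement $(\A_X,m_X)$, and the inductive skeleton is provided by Proposition~\ref{PrePro1}, which ensures all rank $\le 2$ localizations are free with combinatorially determined Betti data.
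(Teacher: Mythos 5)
First, note that the paper itself offers no proof of this proposition: it is quoted verbatim from \cite[Theorem 3.3]{ATW}, so the only meaningful comparison is with the argument in that source, whose strategy (Laurent expansion of $\psi(\A,m;x,t)$ at $x=1$, plus localization of the modules $D^p$ at flats) you have correctly identified. Your setup is sound: $D^p(\A,m)\hookrightarrow D^p(\A_X,m_X)$ does become an isomorphism at the generic point of $X$, and the coefficient of $t^{\ell-k}$ in $\chi$ is indeed governed by the order-$(\ell-k)$ Laurent data of the alternating sums $\sum_p(-1)^p\binom{p}{\ell-k}H(D^p(\A,m),x)$, so that modules differing only on loci of codimension $>k$ contribute identically. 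But the decisive step is missing: you must actually exhibit a comparison (e.g.\ a \v{C}ech/Mayer--Vietoris--type complex $0\to D^p(\A,m)\to\bigoplus_{X\in L_k(\A)}D^p(\A_X,m_X)\to\cdots$ whose higher terms are supported in codimension $>k$, which is essentially the same device \cite{ATW} use to prove polynomiality of $\psi$ in Theorem 2.5) and then verify that each local summand $(\A_X,m_X)$, being of rank $k$, contributes exactly $(-1)^kb_k(\A_X,m_X)$ to the $t^{\ell-k}$ coefficient while contributing nothing in degrees $t^{\ell-j}$ for $j>k$. Saying you ``expect'' the cancellation to follow is precisely where the proof has to happen.

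Moreover, one of your two proposed fallbacks is a dead end. A M\"obius-type inversion generalizing $b_k=\sum_{X\in L_k}|\mu(\hat 0,X)|$ cannot exist for multiarrangements: already for $k=2$ the paper's formula \eqref{PreEq3} gives $b_2(\A,m)=\sum_X e_1(X)e_2(X)$, and the exponents of a rank-two multiarrangement with four or more hyperplanes are \emph{not} determined by $L(\A)$ and $m$ (cf.\ \cite{W}, \cite{AN}), so no purely lattice-theoretic expression for $b_k(\A,m)$ is available. The sheaf-theoretic/\v{C}ech route is the one that works, and it is the one you should carry out in detail; the appeal to Proposition~\ref{PrePro1} at the end is also not the right inductive engine, since freeness of rank-$\le 2$ localizations plays no role in the local--global formula (which holds for all $k$, free or not).
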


Let $X\in L_2(\A)$. Since $(\A_X,m_X)$ is an arrangement of rank two, it is free. We denote its exponents by $(e_1(X),e_2(X),0,\ldots,0)$. 
With the local-global formula, we can express $b_1\Am$ and $b_2\Am$ as follows:

\begin{align}
b_1\Am =& |m| \\
b_2\Am =& \sum_{X\in L_2(\A) } e_1(X) e_2(X) \label{PreEq3}
\end{align}

If $\Am$ is free with exponents $(d_1,\ldots,d_\ell)$, Terao's factorization theorem for multiarrangements 
\cite[Theorem 4.1]{ATW} implies 
\begin{equation}\label{PreEq2}
\chi (\A,m;t) = \prod_{i=0}^\ell (t-d_i).
\end{equation}

Next, we review the addition-deletion theorem for multiarrangements. Let $\Am$ be a multiarrangement and $H_0\in\A$ a fixed hyperplane. 
The deletion $(\A',m')$ of $\Am$ with respect to $H_0$ is defined as follows:

\begin{enumerate}
\item[(1)] If $m(H_0)=1$, then $\A':=\A\setminus \lbrace H_0 \rbrace$ and $m'(H)=m(H)$ for all $H\in\A'$.
\item[(2)] If $m(H_0)\geq 2$, then $\A':=\A$ and for $H\in\A'=\A$, we set
\[m'(H)=
\left\{
	\begin{array}{ll}
		m(H)  & \mbox{if }  H \neq H_0,\\
		m(H) -1  & \mbox{if } H = H_0.
	\end{array}
\right.
\]
\end{enumerate}
The restricted arrangement is defined by $\A^{H_0}=\lbrace H_0 \cap H\mid H\in \A\setminus \lbrace H_0\rbrace \rbrace$. 
Now for $X\in \A^{H_0}$, $(\A_X,m_X)$ is a rank 2 arrangement and we can choose a basis $\lbrace \zeta_1,\ldots,\zeta_{\ell -2}, \theta_X, \psi_X \rbrace$ of $D(\A_X,m_X)$ such that $\pdeg \zeta_i =0$, $\theta_X \not\in \alpha_{H_0} \Der_\K (S)$ and $\theta_X \in \alpha_{H_0} \Der_\K (S)$ as shown in~\cite{ATW2}.
Then the \textbf{Euler multiplicity} $m^*$ 
on $\A^{H_0}$ is defined as $m^*(X) := \pdeg \theta_X$ and $(\A^{H_0},m^*)$ is called the \textbf{Euler restriction} of $\Am$. 
If $H_0$ is a locally heavy hyperplane in $\Am$, we can determine $m^*$ by~\cite[Proposition 4.1]{ATW2} as 
\begin{equation}\label{PreEq1}
m^*(X) = |m_X| - m(H_0)
\end{equation} 
for any $X\in \A^{H_0}$. Hence in the locally heavy case, the Euler restriction $(\A^{H_0},m^*)$ coincides with the Euler-Ziegler restriction $(\A^{H_0},m^{H_0})$ (this is the reason why we 
call our new restriction the Euler-Ziegler restriction). For the Euler-restriction one can prove the following theorem.

\begin{theorem}\cite{ATW2}\label{PreTh3}\textbf{(Addition-Deletion Theorem for multiarrangements)}
Let $\Am$ be a multiarrangment and $H_0\in\A$.
Then any two of the following three statements imply the third:
\begin{enumerate}[(1)]
\item\label{6.5a} $\Am$ is free with $\exp\Am=(d_1,...,d_{\ell-1},d_\ell)$.
\item\label{6.5b} $(\A',m')$ is free with $\exp(\A',m') =(d_1,...,d_{\ell-1},d_\ell-1)$.
\item\label{6.5c} $(\A^{H_0},m^*)$ is free with $\exp(\A^{H_0},m^*)=(d_1,...,d_{\ell-1})$.
\end{enumerate}
\end{theorem}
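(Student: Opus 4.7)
The plan is to establish the short exact sequence of graded $S$-modules
$$0 \to D(\A',m') \xrightarrow{\cdot\alpha_{H_0}} D(\A,m) \xrightarrow{\rho} D(\A^{H_0},m^*) \to 0,$$
where $\rho$ denotes reduction modulo $\alpha_{H_0}$, and then read off the equivalence of any two of (1)--(3) from it via Hilbert series and Saito's criterion (Theorem~\ref{PreTh2}). The Euler multiplicity is tailor-made for this: $m^*(X)$ is the lowest $\pdeg$ of a derivation in $D(\A_X,m_X)$ whose reduction mod $\alpha_{H_0}$ survives.

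For well-definedness of $\rho$, given $\theta \in D(\A,m)$ and $X \in \A^{H_0}$, I would localize to the rank-$2$ multiarrangement $(\A_X,m_X)$ (free by Proposition~\ref{PrePro1}) and decompose $\theta$ in the basis $\{\theta_X,\psi_X\}$ fixed in the definition of $m^*$. Since $\psi_X \in \alpha_{H_0}\Der(S)$, its reduction vanishes on $H_0$, so $\bar\theta$ near $X$ equals $\bar f\,\bar\theta_X$, which lies in $\alpha_X^{m^*(X)}\Der(S/(\alpha_{H_0}))$ as required. For exactness at the middle term, injectivity of $\cdot\alpha_{H_0}$ is automatic; a direct coefficient check carried out separately in the cases $m(H_0)=1$ and $m(H_0)\ge 2$ places $\alpha_{H_0}D(\A',m')\subseteq\ker\rho$; and conversely any $\theta\in\ker\rho$ factors as $\alpha_{H_0}\eta$ with $\eta\in D(\A',m')$ because each $\theta(x_i)$ is divisible by $\alpha_{H_0}$.

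The main obstacle is surjectivity of $\rho$, which does not hold in general and is really the content of the theorem. I would avoid proving it abstractly and instead force it from the freeness hypotheses. Taking Hilbert series of the complex (exact on the left) yields
$$H(D(\A,m);x) = x\cdot H(D(\A',m');x) + H(\im\rho;x),$$
the factor $x$ coming from $\deg\alpha_{H_0}=1$. Under the exponent data of any two of (1)--(3), Saito's criterion pins down two of the three Hilbert series above, and the algebraic identity
$$\frac{\sum_{i=1}^\ell x^{d_i}}{(1-x)^\ell}-x\cdot\frac{\sum_{i=1}^{\ell-1}x^{d_i}+x^{d_\ell-1}}{(1-x)^\ell}=\frac{\sum_{i=1}^{\ell-1}x^{d_i}}{(1-x)^{\ell-1}},$$
together with $|m^*|=|m|-m(H_0)$ from~\eqref{PreEq1}, matches the third. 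This forces $\im\rho$ to coincide with the free $S/(\alpha_{H_0})$-module on exponents $(d_1,\dots,d_{\ell-1})$, simultaneously giving surjectivity of $\rho$ and a homogeneous basis of the missing module, obtained by lifting a basis of $D(\A^{H_0},m^*)$ through $\rho$ and adjoining $\alpha_{H_0}$ times a basis of $D(\A',m')$; Saito's criterion then certifies this candidate as a genuine basis with the claimed exponents, uniformly covering $(1)+(2)\Rightarrow(3)$, $(1)+(3)\Rightarrow(2)$, and $(2)+(3)\Rightarrow(1)$.
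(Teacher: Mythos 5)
The paper does not prove this theorem; it is quoted from \cite{ATW2}, so your attempt has to be measured against the proof there. Your setup is sound and agrees with that source: the restriction map $\rho$ is well defined via the local decomposition of $\theta$ in the basis $\{\zeta_i,\theta_X,\psi_X\}$ of $D(\A_X,m_X)$, the kernel of $\rho$ is exactly $\alpha_{H_0}D(\A',m')$, and the Hilbert-series identity you display is correct arithmetic. The gap is in the final step, where you try to ``force'' surjectivity of $\rho$ from the freeness hypotheses.

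Two things go wrong there. First, in the implications $(1)+(3)\Rightarrow(2)$ and $(2)+(3)\Rightarrow(1)$ the hypotheses control $H(D(\A^{H_0},m^*);x)$ but not $H(\im\rho;x)$, which is all the left-exact sequence sees; so the bookkeeping does not determine the missing Hilbert series, and ``lifting a basis of $D(\A^{H_0},m^*)$ through $\rho$'' presupposes exactly the surjectivity you are trying to establish. Second, even in the one case where the hypotheses do pin down $H(\im\rho;x)$, namely $(1)+(2)\Rightarrow(3)$, you only obtain that $\im\rho$ is a graded submodule of $D(\A^{H_0},m^*)$ with the Hilbert series of a free module on degrees $d_1,\dots,d_{\ell-1}$ (freeness of $\im\rho$ over $S/(\alpha_{H_0})$ does follow, via a depth argument you omit); to upgrade the inclusion to equality via Saito's criterion on $H_0$ you need the degree identity $|m^*|=d_1+\cdots+d_{\ell-1}=|m|-d_\ell$. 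You try to supply this with ``$|m^*|=|m|-m(H_0)$ from \eqref{PreEq1}'', but \eqref{PreEq1} is valid only when $H_0$ is locally heavy --- the theorem is stated for arbitrary $H_0$ --- and even then it yields $|m|-m(H_0)$, not $|m|-d_\ell$. The identity $|m^*|=\sum_{i<\ell}d_i$ is the genuine content of the Euler multiplicity: it comes from the local computation that $\exp(\A_X,m_X)=(m^*(X),\,|m_X|-m^*(X))$ while $\exp(\A_X,m_X-\delta_{H_0})=(m^*(X),\,|m_X|-m^*(X)-1)$, whence $b_2(\A,m)-b_2(\A',m')=|m^*|$ by the local--global formula of Proposition~\ref{PrePro2}, combined with Terao factorization~\eqref{PreEq2} under hypotheses (1) and (2). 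That local exponent analysis, together with the separate arguments needed to make the other two implications close, is the heart of \cite{ATW2} and is missing from your proposal.
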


\begin{prop}\label{PrePro3}\cite[Proposition 1.6]{ATW}
Let $\Am$ be a multiarrangement and $X \in L(\A)$. If $\Am$ is free, then so is $(\A_X,m_X)$.
\end{prop}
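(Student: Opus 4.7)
The plan is to prove freeness of $D(\A_X, m_X)$ in three steps: localize $\Der(S)$ at the prime ideal of $X$, split $D(\A_X, m_X)$ into an essentialization part and a trivially free part, and finally descend freeness from the localization back to the polynomial ring.

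First, I would choose coordinates $x_1,\ldots,x_\ell$ so that $X=V(x_1,\ldots,x_r)$ with $r=\codim X$, and set $P_X=(x_1,\ldots,x_r)\subset S$. For any $H\in\A\setminus\A_X$, $\alpha_H$ does not vanish on $X$, so $\alpha_H\notin P_X$ becomes a unit in $S_{P_X}$, making the logarithmic condition $\theta(\alpha_H)\in\alpha_H^{m(H)}S_{P_X}$ automatic. This gives
\[
D(\A,m)\otimes_S S_{P_X}\;=\;D(\A_X,m_X)\otimes_S S_{P_X},
\]
which is a free $S_{P_X}$-module since $\Am$ is free.

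Second, I would observe that $\alpha_H\in R:=\K[x_1,\ldots,x_r]$ for every $H\in\A_X$, so the partial derivatives $\partial_{x_{r+1}},\ldots,\partial_{x_\ell}$ annihilate every $\alpha_H$. A direct computation then yields a direct-sum decomposition of $S$-modules
\[
D(\A_X,m_X)\;=\;\bigl(D_0\otimes_R S\bigr)\,\oplus\,\bigoplus_{i=r+1}^{\ell} S\,\partial_{x_i},
\]
where $D_0\subset\Der(R)$ is the logarithmic module of $(\A_X,m_X)$ viewed as an essential multiarrangement in $\K^r$. The second summand is free, so freeness of $D(\A_X,m_X)$ over $S$ reduces, via the faithful flatness of $R\hookrightarrow S$, to freeness of $D_0$ over $R$.

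Finally, combining the two preceding steps, $D_0\otimes_R S_{P_X}$ is free over $S_{P_X}$. Since $\mathfrak{m}_R:=(x_1,\ldots,x_r)R$ extends to $P_X$ in $S$, the local ring homomorphism $R_{\mathfrak{m}_R}\to S_{P_X}$ is faithfully flat, so faithfully flat descent gives that $D_0\otimes_R R_{\mathfrak{m}_R}$ is free over $R_{\mathfrak{m}_R}$. Because $D_0$ is a finitely generated graded module over the graded polynomial ring $R$, freeness at the irrelevant maximal ideal implies global freeness, proving the proposition. I expect the main subtlety to lie in setting up the module splitting of the second step correctly and in tracking exactly which modules are free over which local rings during the descent; once this bookkeeping is in place, the argument is a standard combination of localization and flat descent.
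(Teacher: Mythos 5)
Your proof is correct: the paper itself gives no argument for this proposition (it is quoted from \cite{ATW}), and your localization-at-$P_X$ identity, the product-type splitting $D(\A_X,m_X)=(D_0\otimes_R S)\oplus\bigoplus_{i>r}S\,\partial_{x_i}$, and the descent from $S_{P_X}$ to $R_{\mathfrak{m}_R}$ followed by graded Nakayama constitute exactly the standard proof behind the cited result. The only steps you leave implicit are routine: localization commutes with the finite intersection defining $D(\A,m)$ (clear denominators by $\prod_{H\notin\A_X}\alpha_H^{m(H)}\notin P_X$), which justifies your first displayed equality, and a finitely generated direct summand of a free module over the local ring $S_{P_X}$ is free, which is what makes $D_0\otimes_R S_{P_X}$ free before you descend.
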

Next we introduce supersolvable arrangements, which are defined as follows.

\begin{definition}\label{PreDef1}
An arrangement $\A$ is \textbf{supersolvable} if there exists a filtration 
\[ \A=\A_r \supset \A_{r-1} \supset \ldots \supset\A_2\supset\A_1\] such that
\begin{itemize}
\item[(1)] $\rank (\A_i ) = i$ for $i = 1, \ldots , r$.
\item[(2)] For any $H, H' \in \A_i \setminus \A_{i-1} $ with $H\neq H'$, there exists some $H'' \in \A_{i-1}$ such that $H \cap H' \subset H''$ .
\end{itemize}
\end{definition}

With Theorem~\ref{PreTh3} the following theorem is shown in~\cite{ATW2}.
\begin{theorem}\cite[Theorem 5.10]{ATW2}\label{PreTh1}
Let $\Am$ be a multiarrangement such that $\A$ is supersolvable with a filtration $\A=\A_r \supset \A_{r-1} \supset \ldots \supset\A_2\supset\A_1$ and $r\ge 2$.
Let $m_i$ denote the multiplicity $m \mid_{ \A_i }$ and $\exp(\A_2,m_2)=(d_1,d_2,0,\ldots,0)$.
Assume that for each $H' \in \A_d \setminus \A_{d -1 }$, $ H'' \in \A_{d -1 }$ for $d = 3, \ldots , r$ and $X := H' \cap H''$, it holds that
\[\A_X=\lbrace H',H'' \rbrace,\]
or that
\[m(H'')\ge \left( \sum_{ H\in (\A_d)_X \setminus (\A_{d - 1})_X } m(H)\right) -1.\]
Then $\Am$ is free with $\exp \Am = (d_1,d_2,|m_3| - |m_2|, \ldots , |m_r| - |m_{ r - 1}|,0 ,\ldots ,0)$.
\end{theorem}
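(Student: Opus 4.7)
The plan is to induct on the length $r$ of the supersolvable filtration. The base case $r=2$ is immediate from Proposition~\ref{PrePro1}, which asserts that every rank-$\leq 2$ multiarrangement is free, with exponents $(d_1,d_2,0,\ldots,0)$ by the assumption on $(\A_2,m_2)$.

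For the inductive step, I would first apply the induction hypothesis to the truncated filtration $\A_{r-1}\supset\cdots\supset\A_1$ to conclude that $(\A_{r-1},m_{r-1})$ is free with exponents $(d_1,d_2,|m_3|-|m_2|,\ldots,|m_{r-1}|-|m_{r-2}|,0,\ldots,0)$. To bridge from $(\A_{r-1},m_{r-1})$ to $(\A_r,m_r)$, I would iteratively invoke the addition-deletion theorem (Theorem~\ref{PreTh3}), increasing the multiplicity by one at a time on the hyperplanes of $\A_r\setminus\A_{r-1}$. Each such application increments a single exponent by $1$, so after $|m_r|-|m_{r-1}|$ steps we arrive at $(\A_r,m_r)$ with the predicted exponents.

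The central computation is to show that at every such step the Euler restriction $((\A_r)^{H'},m^*)$ onto the hyperplane $H'\in\A_r\setminus\A_{r-1}$ being incremented is free with exponents $(d_1,d_2,|m_3|-|m_2|,\ldots,|m_{r-1}|-|m_{r-2}|)$. By supersolvability, any codimension-$2$ flat $H'\cap H''$ with $H''\in\A_r\setminus\A_{r-1}$ is contained in some $H'''\in\A_{r-1}$, which forces $H'\cap H''=H'\cap H'''$; hence the underlying set $(\A_r)^{H'}$ consists exactly of the flats $H'\cap H''$ with $H''\in\A_{r-1}$. Consequently $(\A_r)^{H'}$ is the pull-back of $\A_{r-1}$ to $H'$ and inherits a supersolvable filtration of length $r-1$ obtained by intersecting $\A_{r-1}\supset\cdots\supset\A_1$ with $H'$. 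The dichotomy in the hypothesis -- either $\A_X=\{H',H''\}$ or $m(H'')\ge|m_{(\A_r)_X\setminus(\A_{r-1})_X}|-1$ -- combined with \eqref{PreEq1} furnishes an explicit formula for $m^*(X)$ at each $X\in(\A_r)^{H'}$. One then checks that the resulting $m^*$ satisfies the same multiplicity hypothesis of the theorem for the pulled-back filtration on $H'$, so the inductive hypothesis applied to $((\A_r)^{H'},m^*)$ yields its freeness with the claimed exponents.

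The hard part will be verifying that the multiplicity condition truly propagates under restriction: one must show that at every codimension-$2$ flat of $(\A_r)^{H'}$ -- which corresponds to a codimension-$3$ flat of $\A_r$ -- the locally heavy inequality or the two-hyperplane condition continues to hold after pulling back. This requires careful bookkeeping of how $m_X$ and $m^*$ interact across successive levels of the filtration, using both clauses of the hypothesis simultaneously, and constitutes the technical heart of the argument.
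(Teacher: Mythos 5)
The paper does not actually prove this statement: it is quoted as a preliminary from \cite[Theorem 5.10]{ATW2}, so there is no in-paper argument to compare against. Your overall strategy --- induction on the length $r$ of the filtration, building $(\A_r,m_r)$ from $(\A_{r-1},m_{r-1})$ by raising the multiplicities on $\A_r\setminus\A_{r-1}$ one unit at a time via Theorem~\ref{PreTh3} --- is the strategy of the cited proof, and your observation that supersolvability forces every flat of $(\A_r)^{H'}$ to be of the form $H'\cap H'''$ with $H'''\in\A_{r-1}$ is correct and necessary.

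The problem is that what you label ``the hard part'' and defer is not bookkeeping; it is the entire content of the theorem, and it is exactly where the two-case hypothesis (in particular the ``$-1$'' in the inequality) is consumed. For each addition step at $H'\in\A_r\setminus\A_{r-1}$ you must actually prove that the Euler multiplicity satisfies $m^*(X)=\sum_{H\in(\A_{r-1})_X}m(H)$ for every $X\in(\A_r)^{H'}$, so that the Euler restriction is the restriction of $(\A_{r-1},m_{r-1})$ to $H'$. Note that the map $\A_{r-1}\to(\A_r)^{H'}$, $H\mapsto H\cap H'$, need not be injective, so ``pull-back'' is misleading, and \eqref{PreEq1} only covers the locally heavy case of \cite[Proposition 4.1]{ATW2}; the hypothesis here ($m(H'')\ge\nu(X)-1$ rather than $\ge\nu(X)$) forces you into the other cases of that proposition, which you never invoke. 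Moreover this identity must hold for every \emph{intermediate} multiplicity along the chain of additions (which does work, since lowering multiplicities on $\A_r\setminus\A_{r-1}$ only weakens the right-hand side of the inequality, but this needs to be said), and before the induction hypothesis can be applied to $((\A_r)^{H'},m^*)$ you must verify both that the induced filtration on $H'$ is supersolvable and that $m^*$ again satisfies the two-case hypothesis for that filtration --- a claim about codimension-three flats of $\A$ that does not follow formally from the stated hypothesis on codimension-two flats. As written, the proposal is a faithful outline of the known proof, but not a proof.
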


The following theorems are fundamental in the study of freeness of simple arrangements. In the 
rest of this paper, we generalize these results to the setup of unbalanced multiarrangements.

\begin{theorem}\cite[Theorem 11]{Z}\label{PreTh4}
For a free arrangement $\A$ with $\exp (\A) = (1, d_2 , \ldots , d_\ell)$, the Ziegler restriction $(\A^{H_0},m^{H_0})$ is free for any hyperplane $H_0\in\A$ and $\exp (\A^{H_0}, m^{H_0} ) = ( d_2 , \ldots , d_\ell)$. 
\end{theorem}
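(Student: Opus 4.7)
The plan is to construct an explicit basis of $D(\A^{H_0}, m^{H_0})$ by restricting a basis of $D(\A)$ modulo a defining form of $H_0$, and then verify freeness via Saito's criterion (Theorem~\ref{PreTh2}). Choose coordinates so that $\alpha_{H_0} = x_\ell$, and pick a homogeneous basis $\theta_1 = \theta_E = \sum_i x_i \partial_i, \theta_2, \ldots, \theta_\ell$ of $D(\A)$ with $\pdeg \theta_i = d_i$. For each $i \ge 2$, the logarithmic condition $\theta_i(x_\ell) \in (x_\ell)$ lets us write $\theta_i(x_\ell) = x_\ell g_i$ with $g_i \in S_{d_i - 1}$; replacing $\theta_i$ by $\theta_i - g_i \theta_E$ preserves both homogeneity and basis-status, and arranges $\theta_i(x_\ell) = 0$ for all $i \ge 2$. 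Each such $\theta_i$ then has no $\partial_\ell$-component and induces, by reducing its coefficients modulo $x_\ell$, a homogeneous derivation $\bar\theta_i$ on $\bar S := S/(x_\ell) = \K[x_1, \ldots, x_{\ell-1}]$ of polynomial degree $d_i$.

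The substantive step is to verify $\bar\theta_i \in D(\A^{H_0}, m^{H_0})$ for $i \ge 2$. Fix $X \in \A^{H_0}$, set $k := m^{H_0}(X) = |\A_X| - 1$, and choose a linear form $\alpha_X \in \K[x_1, \ldots, x_{\ell-1}]$ defining $X$ inside $H_0$. After suitable rescaling, the $k$ elements of $\A_X \setminus \{H_0\}$ are defined by linear forms $\alpha_X + b_j x_\ell$ with pairwise distinct $b_j \in \K$. Since $\theta_i(x_\ell) = 0$, we have $\theta_i(\alpha_X + b_j x_\ell) = \theta_i(\alpha_X)$, and the logarithmic conditions $\theta_i(\alpha_X + b_j x_\ell) \in (\alpha_X + b_j x_\ell)$ force $\theta_i(\alpha_X)$ to be divisible in $S$ by each of the $k$ pairwise coprime irreducible forms $\alpha_X + b_j x_\ell$, hence by their product. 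Reducing modulo $x_\ell$ then yields $\bar\theta_i(\alpha_X) \in (\alpha_X)^k$, as required. This multiplicity-jump, from the easy $(\alpha_X)$ to the full $(\alpha_X)^k$, is the only delicate point of the argument and crucially exploits all $k$ hyperplanes through $X$ simultaneously together with unique factorization in $S$.

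For freeness itself, apply Saito's criterion. The $\ell$-th row of $M(\theta_E, \theta_2, \ldots, \theta_\ell)$ is $(x_\ell, 0, \ldots, 0)$, so Laplace expansion along it gives $\det M(\theta_E, \theta_2, \ldots, \theta_\ell) = \pm\, x_\ell \det (\theta_{j+1}(x_i))_{1 \le i,j \le \ell - 1}$. By freeness of $\A$, the left side equals a nonzero scalar multiple of $\Q\A = x_\ell \prod_{H \neq H_0} \alpha_H$. Cancelling $x_\ell$ and reducing modulo $x_\ell$, we obtain $\det M(\bar\theta_2, \ldots, \bar\theta_\ell) \in \K^* \cdot \Q(\A^{H_0}, m^{H_0})$. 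Combined with $\sum_{i \ge 2} \pdeg \bar\theta_i = d_2 + \cdots + d_\ell = |\A| - 1 = |m^{H_0}|$, Saito's criterion yields that $\{\bar\theta_2, \ldots, \bar\theta_\ell\}$ is a basis of $D(\A^{H_0}, m^{H_0})$, and hence $(\A^{H_0}, m^{H_0})$ is free with exponents $(d_2, \ldots, d_\ell)$.
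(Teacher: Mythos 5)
Your proof is correct and follows essentially the same route the paper itself takes: the divisibility "multiplicity-jump" step is exactly Lemma~\ref{Le1} (for $p=1$, applied to derivations annihilating $\alpha_{H_0}$), and the normalization by the Euler derivation followed by Saito's criterion on the restricted basis is the simple-arrangement specialization of the argument proving Theorem~\ref{Th1}, with $\theta_E$ playing the role of the good summand. The only cosmetic difference is that in Theorem~\ref{Th1} the vanishing $\theta_i(x_1)=0$ is forced by degree reasons from heaviness, whereas you arrange it by subtracting $g_i\theta_E$, which is the standard device when the exponent is $1$.
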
 

\begin{theorem}\cite[Theorem 5.1]{AY2}\label{PreTh5}
Let $\A$ be an arrangment with a fixed hyperplane $H_0\in \A$ and assume that $\ell \geq 3$. Let $(\A^{H_0},m^{H_0})$ be the Ziegler restriction onto $H_0$. Then it holds that 
\[b_2(\A) - (|\A|-1) \ge b_2 \left( \A^{H_0},m^{H_0} \right).\]
Moreover, 
$\A$ is free if and only if the above inequality is an equality, and 
$(\A^{H_0},m^{H_0})$ is free.
\end{theorem}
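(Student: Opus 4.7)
The plan is to analyze the Ziegler restriction map on the ``Euler-free'' part of $D(\A)$. Define $D_0(\A) := \{\theta \in D(\A) : \theta(\alpha_{H_0}) = 0\}$. Since $\theta_E(\alpha_{H_0}) = \alpha_{H_0}$, the map $\theta \mapsto \theta(\alpha_{H_0})/\alpha_{H_0}$ yields a split exact sequence $0 \to D_0(\A) \to D(\A) \to S(-1) \to 0$, and hence a decomposition $D(\A) \cong S\theta_E \oplus D_0(\A)$ as graded $S$-modules. The Ziegler restriction map $\rho\colon D_0(\A) \to D(\A^{H_0}, m^{H_0})$, sending $\theta$ to its reduction $\bar\theta$ modulo $\alpha_{H_0}$, is a well-defined graded morphism: on each rank-$2$ localization at $X \in \A^{H_0}$, Proposition \ref{PrePro3} reduces the verification of the Ziegler multiplicity condition to a direct computation.

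Next, the kernel of $\rho$ is $\alpha_{H_0} D_0(\A)$, giving
\[0 \to D_0(\A)(-1) \xrightarrow{\cdot \alpha_{H_0}} D_0(\A) \xrightarrow{\rho} \im \rho \to 0,\]
and hence the coefficient-wise inequality
\[(1-x)\, H(D_0(\A), x) = H(\im \rho, x) \le H(D(\A^{H_0}, m^{H_0}), x).\]
I would plug both Hilbert series into the $\psi$-formula from Theorem~2.5 of \cite{ATW} and compare coefficients of $t^{\ell-2}$ in the induced characteristic polynomials. The summand $S\theta_E$ contributes the shift of $|\A|-1 = |m^{H_0}|$ between $b_1(\A)$ and the multiarrangement side, yielding the desired inequality $b_2(\A) - (|\A|-1) \ge b_2(\A^{H_0}, m^{H_0})$.

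For the freeness equivalence, the forward direction combines Theorem \ref{PreTh4} (freeness of $\A$ with $\exp\A = (1, d_2, \ldots, d_\ell)$ forces freeness of $(\A^{H_0}, m^{H_0})$ with $\exp = (d_2, \ldots, d_\ell)$) with the factorizations $\chi(\A, t) = (t-1)\prod_{i\ge 2}(t-d_i)$ and $\chi(\A^{H_0}, m^{H_0}, t) = \prod_{i\ge 2}(t - d_i)$ from \eqref{PreEq2}, which together force equality. Conversely, assume $(\A^{H_0}, m^{H_0})$ is free with exponents $(d_2, \ldots, d_\ell)$ and the $b_2$-equality holds; the Hilbert series inequality above then saturates, so $\rho$ is surjective. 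Lifting a homogeneous basis $\bar\theta_2, \ldots, \bar\theta_\ell$ through $\rho$ to $\theta_2, \ldots, \theta_\ell \in D_0(\A)$ with $\pdeg \theta_i = d_i$, Saito's criterion (Theorem \ref{PreTh2}) applied to $\{\theta_E, \theta_2, \ldots, \theta_\ell\}$---which are $S$-linearly independent because their $\bar S$-reductions are, and whose polynomial degrees sum to $1 + d_2 + \cdots + d_\ell = |\A|$---gives the freeness of $\A$.

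The hardest step is deducing surjectivity of $\rho$ from only the single $b_2$-equality together with freeness of $(\A^{H_0}, m^{H_0})$. This requires translating a coefficient equality into the full Hilbert-series equality $H(\im \rho, x) = H(D(\A^{H_0}, m^{H_0}), x)$. The key input is that freeness of $(\A^{H_0}, m^{H_0})$ pins down its entire Hilbert series, while the $\psi$-polynomial of Theorem~2.5 in \cite{ATW} has only finitely many monomials, so matching the $b_2$-coefficient together with the known Hilbert series of $D(\A^{H_0}, m^{H_0})$ forces equality in all degrees. Once surjectivity is established, the remaining verification via Saito's criterion is routine.
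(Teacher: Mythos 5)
Your setup (the splitting $D(\A)=S\theta_E\oplus D_0(\A)$, the well-definedness of the Ziegler restriction map $\rho$, the exact sequence with kernel $\alpha_{H_0}D_0(\A)$, the forward direction via Theorem~\ref{PreTh4}, and the Saito-criterion lifting once surjectivity of $\rho$ is known) is sound and matches the standard machinery. The genuine gap is exactly where you locate it, and your proposed fix does not work for $\ell>3$. The coefficient $b_2(\A)$ is not read off from $H(D^1(\A),x)$ alone: in the $\psi$-formula it arises as $\lim_{x\to1}$ of the coefficient of $t^{\ell-2}$, which mixes \emph{all} the series $H(D^p(\A),x)$ weighted by powers of $(x-1)$. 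Concretely, writing the difference $\chi_0(\A;t)-\chi(\A^{H_0},m^{H_0};t)$ as $\lim_{x\to1}\sum_p\bigl[H(M^p,x)-H(D^p(\A^{H_0},m^{H_0}),x)\bigr](t(x-1)-1)^p$ with $M^p=\operatorname{im}\mathrm{res}^p_{H_0}$, the coefficient of $t^{\ell-3}$ (which carries the $b_2$-defect) only sees the terms $p=\ell-3,\ell-2$, not $p=1$, once $\ell\ge4$. So a single numerical equality of $b_2$'s cannot force $H(\operatorname{im}\rho,x)=H(D(\A^{H_0},m^{H_0}),x)$ in every degree; "finitely many monomials in $\psi$" does not rescue this, since the two Hilbert series being compared are infinite and the $b_2$-coincidence constrains only one limit of one signed combination. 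Your argument is correct essentially only for $\ell=3$, where the difference of characteristic polynomials is a constant equal to $\dim_\K\operatorname{coker}\mathrm{res}^1_{H_0}$.

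The missing ingredients are localization and a sheaf-theoretic globalization, which is how \cite{AY2} proves this theorem and how the present paper proves its multiarrangement analogues (Propositions~\ref{Prop3} and~\ref{Prop4} and the proof of Theorem~\ref{AYheavy}). One first establishes both the inequality and the equivalence in rank $3$ by your Hilbert-series/cokernel computation; then for general $\ell$ one uses the local--global formula of Proposition~\ref{PrePro2} to write $b_2(\A)-(|\A|-1)=\sum_{Y\in L_2(\A^{H_0})}\sum_{X\cap H_0=Y}b_2(\A_X)$ and compares summand by summand with $b_2(\A^{H_0},m^{H_0})=\sum_Y b_2(\A^{H_0}_Y,m^{H_0}_Y)$, so that global equality forces all the local rank-$3$ equalities, i.e.\ local freeness in codimension $3$ along $H_0$. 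Finally, an induction on $\ell$ upgrades this to local freeness along all of $H_0\setminus\{0\}$, which makes the sheafified residue map surjective; since $\widetilde{D_0(\A)}|_{H_0}\simeq\widetilde{D(\A^{H_0},m^{H_0})}$ then splits, the splitting criterion for reflexive sheaves of \cite{AY} yields freeness of $\A$. Without this local-to-global step your converse direction does not close.
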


\section{Proof of Theorem~\ref{AYheavy}}

The Ziegler restrictions of 
free simple arrangements are also free by Theorem~\ref{PreTh4}. However, 
the multi-Ziegler restriction of a free multiarrangement is not necessarily free in general 
(e.g. see~\cite[Example 5.7]{K}). 
In the following, 
our first goal is to  generalize 
Theorem~\ref{PreTh4} to free unbalanced multiarrangements.

\begin{definition}
Fix a hyperplane $H_0\in \Am$ with $H_0=\ker \alpha_{H_0}$. Define the submodules $D_0^p\Am$ of $D^p\Am$ by
\[D_0^p\Am := \lbrace \delta \in D^p\Am \mid \delta (\alpha_{H_0},f_2,\ldots,f_p ) = 0 \mbox{ for any }f_i\in S \rbrace.\]
\end{definition}
The next Lemma was already shown in \cite[Theorem 11]{Z} for the Ziegler restriction of a simple arrangement and $p=1$. The proof transfers directly. We give it here for the completeness.
\begin{lemma}\label{Le1}
Let $\Am$ be a multiarrangement and fix a hyperplane $H_0\in \A$ with $H_0=\ker \alpha_{H_0}$.
If $\delta\in D_0^p\Am$, then $\delta\mid_{H_0} \in D^p(\A^{H_0},m^{H_0})$. In particular, this gives well-defined restriction maps $\mbox{res}_{H_0}^p : D_0^p \Am \rightarrow D^p (\A^{H_0},m^{H_0})$.
\end{lemma}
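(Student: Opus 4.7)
The plan is to verify two things for any $\delta \in D_0^p(\A,m)$: that $\delta$ descends to a $p$-derivation $\delta|_{H_0}$ on $S/(\alpha_{H_0})$, and that this descended derivation satisfies the multiarrangement condition for $(\A^{H_0}, m^{H_0})$ at every $X \in \A^{H_0}$. The $p=1$ version is essentially Ziegler's original argument in \cite[Theorem 11]{Z}; the only new ingredient needed to pass to higher $p$ is the alternating property of $\Der^p(S) = \wedge^p \Der(S)$, which ensures that the hypothesis $\delta(\alpha_{H_0}, \cdot) = 0$ in the first slot actually controls every slot.

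For well-definedness, I would first note that alternation extends the vanishing $\delta(\alpha_{H_0}, f_2, \ldots, f_p) = 0$ to $\delta(f_1, \ldots, \alpha_{H_0}, \ldots, f_p) = 0$ regardless of where $\alpha_{H_0}$ appears. Combined with the Leibniz rule in each argument, this yields $\delta(\alpha_{H_0} g, f_2, \ldots, f_p) \in (\alpha_{H_0})$ and similarly for any slot, so replacing any $f_i$ by $f_i + \alpha_{H_0} h_i$ changes $\delta(f_1, \ldots, f_p)$ only by an element of $(\alpha_{H_0})$. Setting $\delta|_{H_0}(\bar f_1, \ldots, \bar f_p) := \delta(f_1, \ldots, f_p) \bmod \alpha_{H_0}$ therefore defines an alternating $p$-derivation of $S/(\alpha_{H_0})$, and the assignment $\delta \mapsto \delta|_{H_0}$ is manifestly $S$-linear and well-defined.

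To verify the multiarrangement condition, I would fix $X \in \A^{H_0}$ and choose a lift $\tilde\alpha_X \in V^*$ of the defining form $\alpha_X$ of $X$ in $H_0$. Since $X$ has codimension $2$ in $V$, each $H \in \A_X \setminus \{H_0\}$ has a defining form that can be written as $\alpha_H = c_H \tilde\alpha_X + b_H \alpha_{H_0}$ with $c_H \in \K^*$ (otherwise $H = H_0$). The hypothesis $\delta(\alpha_{H_0}, \cdot) = 0$ then gives $\delta(\alpha_H, f_2, \ldots, f_p) = c_H \delta(\tilde\alpha_X, f_2, \ldots, f_p)$, and the condition $\delta \in D^p(\A, m)$ forces $\delta(\tilde\alpha_X, f_2, \ldots, f_p) \in \alpha_H^{m(H)} S$ for every $H \in \A_X \setminus \{H_0\}$. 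Since the linear forms $\{\alpha_H : H \in \A_X \setminus \{H_0\}\}$ are pairwise non-proportional, they are pairwise coprime in the UFD $S$, so the intersection of the principal ideals $(\alpha_H^{m(H)})$ equals their product, which yields
$$
\delta(\tilde\alpha_X, f_2, \ldots, f_p) \in \prod_{H \in \A_X \setminus \{H_0\}} \alpha_H^{m(H)} \, S.
$$
Reducing modulo $\alpha_{H_0}$ and using $\alpha_H \equiv c_H \tilde\alpha_X \pmod{\alpha_{H_0}}$ converts the right-hand side, up to a unit, into $\alpha_X^{m^{H_0}(X)} \cdot S/(\alpha_{H_0})$, which is precisely the condition defining $D^p(\A^{H_0}, m^{H_0})$ at $X$.

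I expect no significant obstacle. The only non-formal step is combining the separate divisibilities $\alpha_H^{m(H)} \mid \delta(\tilde\alpha_X, \cdots)$ into the single product divisibility, and this is immediate from pairwise coprimeness of the linear forms in the UFD $S$. Everything else is routine bookkeeping with the alternating and Leibniz properties of $\wedge^p \Der(S)$ together with the codimension-$2$ linear-algebra identity $\alpha_H = c_H \tilde\alpha_X + b_H \alpha_{H_0}$.
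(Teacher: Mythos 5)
Your proposal is correct and follows essentially the same route as the paper's proof: express each $\alpha_H$ for $H\in\A_X\setminus\{H_0\}$ as a combination of $\alpha_{H_0}$ and a lift of the defining form of $X$, use $\delta(\alpha_{H_0},\cdot)=0$ to transfer the divisibility $\alpha_H^{m(H)}\mid\delta(\tilde\alpha_X,f_2,\ldots,f_p)$, combine by coprimeness into the product, and reduce modulo $\alpha_{H_0}$. The only difference is that you spell out the well-definedness of $\delta\mapsto\delta|_{H_0}$ via alternation and the Leibniz rule, which the paper leaves implicit; this is a harmless (and welcome) addition.
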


\begin{proof}
We may assume $\alpha_{H_0}=x_1$ by choosing suitable coordinates. Let $X\in\A^{H_0}$ and set
\[\A_X=\lbrace H_0,H_1,...,H_k\rbrace.\]
Hence $H_0 \cap H_{i}=X$ for all $i=1,...,k$.
We may assume that the defining linear forms $\alpha_i$ for $H_i$ are of the form
\[\alpha_i (x_1,...,x_\ell)=c_i x_1 + \alpha'(x_2,...,x_\ell)\]
for $i=1,...,k$, $\alpha' \in H_0^*$ a linear form and suitable $c_i\in \K$ which are pairwise distinct. In this situation, we have $\ker_{H_0} \alpha' = X$, i.e., $\alpha'$ is the defining equation of $X$ in $H_0$.
Let $\delta \in D_0^p \Am$ and $f_2,\ldots, f_p\in V^*$. By definition of the derivation module, 
it holds for $i=1,...,k$,
\[\delta (c_i x_1 + \alpha'(x_2,...,x_\ell),f_2,\ldots,f_p)\in (c_i x_1 + \alpha'(x_2,...,x_\ell))^{m(H_i)}S.\] 
Since $\delta(x_1,f_2,\ldots,f_p)=0$ due to the definition of $D_0^p\Am$, $(c_i x_1 + \alpha'(x_2,...,x_\ell))^{m(H_i)}$ is a divisor of $\delta ( \alpha'(x_2,...,x_\ell),f_2,\ldots,f_p)$ for all $i=1,...,k$. This fact yields
\[\delta ( \alpha'(x_2,...,x_\ell),f_2,\ldots,f_p)\in \prod_{i=1}^k (c_i x_1 + \alpha'(x_2,...,x_\ell))^{m(H_i)}S.\]
Now, the restricting onto $H_0$ gives us
\[\delta\mid_{H_0} ( \alpha',\overline{f_2},\ldots,\overline{f_p})\in (\alpha')^{\sum_{i=1}^k m(H_i)} \overline{S}=(\alpha')^{m^{H_0}(X)}\overline{S},\]
where $\overline{S}:=S/S x_1$ and $\overline{f_i}$ is the image of $f_i \in S$ under 
the canonical surjection $S \rightarrow \overline{S}$. Since $f_2,\ldots,f_p$ are chosen 
arbitrarily, it holds that $\delta|_{
H_0} \in D^p(\A^{H_0},m^{H_0})$.
\end{proof}

Next we introduce a particular type of derivation and show its existence for two special classes of multiarrangements. This derivation will play the role of the Euler derivation of a simple arrangement.   
\begin{definition}
Let $\Am$ be a multiarrangement with $H_0\in\A$. A derivation $\theta_0 \in D\Am$ is called a \textbf{good summand to $H_0$}, if $\pdeg \theta_0 =m(H_0)$ and $\theta_0 (\alpha_{H_0}) = \alpha_{H_0}^{m(H_0) }$. In this case, we also say $\Am$ has a good summand to $H_0$.
\end{definition}

Let $\theta_0$ be a good summand to $H_0$ of a multiarrangement $\Am$ with $H_0\in\A$.
Then we can decompose a general $\theta\in D\Am$ into 
\[\theta=\left( \theta-\frac{\theta(\alpha_{H_0})}{\alpha_{H_0}^{m(H_0)}}\theta_0 \right)+\frac{\theta(\alpha_{H_0})}{\alpha_{H_0}^{m(H_0)}}\theta_0.\]
Since $\theta-\frac{\theta(\alpha_{H_0})}{\alpha_{H_0}^{m(H_0)}}\theta_0 \in D_0\Am$, we obtain a decomposition
\[D\Am=S\theta_0 \oplus D_0\Am.\]
The next theorem shows that an unbalanced free multiarrangement has such a good summand. 

\begin{theorem}\label{Th1}
Suppose $\Am$ is a free multiarrangement with $\exp \Am =(d_1,\ldots,d_\ell)$ such that $H_0 \in \A$ is the heavy hyperplane with multiplicity $m_0=m(H_0)$. Then it holds that
\begin{itemize}
\item[(1)] \label{Th1.1} $\Am$ has a good summand to $H_0$, and %after a suitable renaming of some $d_i$ we find $\exp \Am=(m_0,d_2,...,d_\ell)$.
$m_0 \in \exp(\A,m)$. Let $\exp(\A,m)=(m_0,d_2,\ldots,d_\ell)$.
\item[(2)] \label{Th1.2} The Euler-Ziegler restriction $(\A^{H_0},m^{H_0})$ of $\A$ 
onto $H_0$ is a free multiarrangement with exponents $\exp(\A^{H_0},m^{H_0})=(d_2,...,d_\ell)$.
\end{itemize}
\end{theorem}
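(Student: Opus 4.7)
The plan is to mirror the proof of Ziegler's Theorem~\ref{PreTh4} with the good summand $\theta_0$ playing the role of the Euler derivation. The argument breaks into three stages: (i) produce $\theta_0 \in D(\A,m)$ with $\pdeg \theta_0 = m_0$ and $\theta_0(\alpha_{H_0}) = \alpha_{H_0}^{m_0}$; (ii) use $\theta_0$ to decompose $D(\A,m) = S\theta_0 \oplus D_0(\A,m)$ and read off the exponent $m_0$; (iii) apply Lemma~\ref{Le1} together with Saito's criterion on $(\A^{H_0},m^{H_0})$ to conclude that the restriction is free with exponents $(d_2,\ldots,d_\ell)$.

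Stage (i) is the heart of the matter. Starting from a homogeneous basis $\theta_1,\ldots,\theta_\ell$ of $D(\A,m)$ with $d_1 \le \cdots \le d_\ell$, the element $\theta_i(\alpha_{H_0})$ is homogeneous of degree $d_i$ and lies in $\alpha_{H_0}^{m_0}S$, so it vanishes when $d_i < m_0$ and otherwise can be written $\theta_i(\alpha_{H_0}) = \alpha_{H_0}^{m_0} g_i$ with $g_i \in S_{d_i - m_0}$. Saito's criterion together with factoring $\alpha_{H_0}^{m_0}$ out of the first row of $M(\theta_1,\ldots,\theta_\ell)$ yields
\[cQ_0 = \sum_{d_i \ge m_0}(-1)^{i+1}g_i \tilde M_i,\]
where $Q_0 := \Q(\A,m)/\alpha_{H_0}^{m_0}$ and $\tilde M_i$ is the $(1,i)$-minor of degree $|m|-d_i$. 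The image of the $S$-linear evaluation map $\rho\colon D(\A,m) \to \alpha_{H_0}^{m_0}S$, $\theta \mapsto \theta(\alpha_{H_0})$, is reflexive of rank one over the UFD $S$, hence principal. Heaviness, i.e.\ $\deg Q_0 = |m|-m_0 \le m_0$, combined with the identity above, will force the principal generator to be $\alpha_{H_0}^{m_0}$ itself -- equivalently, $(g_1,\ldots,g_\ell) = S$ -- so some $d_i$ must equal $m_0$ with the corresponding $g_i$ a nonzero scalar. A suitable rescaling of this basis element furnishes $\theta_0$. I expect this degree-and-principality argument to be the most delicate step, because the heaviness inequality must be leveraged non-trivially to rule out the scenario that all $g_i$ lie in the maximal ideal of $S$.

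For (ii), given $\theta_0$, setting
\[\theta' := \theta - \frac{\theta(\alpha_{H_0})}{\alpha_{H_0}^{m_0}}\theta_0\]
for $\theta \in D(\A,m)$ produces an element of $D_0(\A,m)$: the quotient lies in $S$ since $\theta(\alpha_{H_0}) \in \alpha_{H_0}^{m_0}S$, and both $\theta$ and $\theta_0$ satisfy the defining divisibility conditions at each $H \neq H_0$. The sum $D(\A,m) = S\theta_0 + D_0(\A,m)$ is direct -- apply any relation to $\alpha_{H_0}$ -- so $D_0(\A,m)$ is a direct summand of the free module $D(\A,m)$, hence free by Quillen--Suslin. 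A homogeneous basis $\theta_2,\ldots,\theta_\ell$ of $D_0(\A,m)$ together with $\theta_0$ then forms a basis of $D(\A,m)$, confirming $\exp(\A,m) = (m_0,d_2,\ldots,d_\ell)$.

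Stage (iii) is routine. By Lemma~\ref{Le1}, $\bar\theta_i := \theta_i|_{H_0} \in D(\A^{H_0},m^{H_0})$ for $i \ge 2$. Choosing coordinates so that $\alpha_{H_0} = x_1$, the Saito matrix $M(\theta_0,\theta_2,\ldots,\theta_\ell)$ has first row $(x_1^{m_0},0,\ldots,0)$ since $\theta_i \in D_0$ for $i \ge 2$, giving $\det(\theta_i(x_k))_{i,k\ge 2} = cQ_0$. Reducing modulo $x_1$ yields $\det(\bar\theta_i(\bar x_k))_{i,k\ge 2} = c\bar Q_0$, and $\bar Q_0$ coincides up to a nonzero scalar with $\Q(\A^{H_0},m^{H_0})$ because the forms $\alpha_H$ for $H \in \A_X\setminus\{H_0\}$ restrict to proportional multiples of a defining form of $X$ in $H_0$. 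Since $\sum_{i\ge 2}\pdeg \bar\theta_i = |m|-m_0 = |m^{H_0}|$, Saito's criterion (Theorem~\ref{PreTh2}) confirms that $\{\bar\theta_i\}_{i=2}^\ell$ is a basis of $D(\A^{H_0},m^{H_0})$, yielding freeness with exponents $(d_2,\ldots,d_\ell)$.
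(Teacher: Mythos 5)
Your stages (ii) and (iii) are sound and coincide with the paper's argument, but stage (i) -- the existence of the good summand, which is the heart of the theorem -- has a genuine gap. First, the claim that the image of $\rho\colon D\Am \to \alpha_{H_0}^{m_0}S$ is reflexive, hence principal, is unjustified: that image is $\alpha_{H_0}^{m_0}(g_1,\ldots,g_\ell)$, and an image of a module map into $S$ is merely an ideal; ideals of a polynomial ring are torsion-free but generally neither reflexive nor principal (e.g.\ $(x_1,x_2)$). Knowing that the kernel $D_0\Am$ is reflexive tells you nothing about the cokernel of the inclusion. Second, even granting principality, the decisive assertion ``heaviness forces the generator to be $\alpha_{H_0}^{m_0}$'' is exactly what must be proved, and the cofactor identity $c\,Q_0=\sum_{d_i\ge m_0}(-1)^{i+1}g_i\tilde M_i$ together with $\deg Q_0\le m_0$ does not by itself rule out that every nonzero $g_i$ has positive degree.

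The mechanism that actually closes this is twofold, and both pieces are missing from your stage (i). (a) Heaviness enters through Saito's degree condition: since $\sum_i d_i=|m|\le 2m_0$ and at least one $d_i\ge m_0$ (otherwise all $\theta_i(\alpha_{H_0})=0$ and $\det M=0$), at most one index -- say $i=1$ -- can have $d_1\ge m_0$, so all other $\theta_i$ kill $\alpha_{H_0}$ and the cofactor sum collapses to the single term $g_1\tilde M_1$. (b) One then needs a \emph{lower} bound on $\deg\tilde M_1|_{H_0}$: by Lemma~\ref{Le1} the restrictions $\theta_i|_{H_0}$ ($i\ge 2$) lie in $D(\A^{H_0},m^{H_0})$, so by Saito's criterion the nonzero restricted minor is divisible by $\Q(\A^{H_0},m^{H_0})$, giving $|m|-d_1=\deg\tilde M_1\ge |m|-m_0$, whence $d_1=m_0$ and $g_1$ is a nonzero scalar. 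You have deferred all restriction arguments to stage (iii), but step (b) is needed already to produce $\theta_0$; without it your stage (i) does not go through. Incorporating (a) and (b) turns your outline into precisely the paper's proof.
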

\begin{proof}
We may choose coordinates such that $H_0=\{ x_1=0\}$. Let $\theta_1,...,\theta_\ell$ be a homogeneous basis of $D\Am$. By definition of $D\Am$ it holds for $i=1,...,\ell$
\begin{equation}\label{Eq1}
\theta_i (x_1) \in  x_1^{m_0}S.
\end{equation}
Assume $\pdeg \theta_i <m_0$ for all $i=1,...,\ell$. Thus~\eqref{Eq1} implies $\theta_i (x_1)=0$ for all $i=1,...,\ell$. 
Therefore,
\begin{equation}\label{Eq3}
\det M(\theta_1,...,\theta_\ell)=\det 
\begin{bmatrix}
0	&  \cdots	 & 0     \\ 
\theta_1(x_2)	&  \cdots	 & \theta_\ell(x_2)   \\
\vdots &  & \vdots \\
\theta_1(x_\ell)	&  \cdots	 & \theta_\ell(x_\ell)      
\end{bmatrix} =0.
\end{equation} 
However, with Theorem~\ref{PreTh2} we may assume 
\begin{equation}\label{Eq4}
\det M(\theta_1,...,\theta_\ell)= \Q\Am. 
\end{equation}  
In particular, $\det M(\theta_1,...,\theta_\ell)\neq 0$, which is a contradiction to~\eqref{Eq3}, and hence we may assume that $\theta_1(x_1) \neq 0$ with $\pdeg\theta_1 \geq m_0$.
Theorem~\ref{PreTh2} further yields
\begin{equation}\label{Eq5}
|m|=\sum_{i=1}^\ell \pdeg \theta_i.
\end{equation}
Together with our assumption $2m_0 \ge |m|$, this yields $\pdeg \theta_i <m_0$ for all $i=2,...,\ell$. Thus %by
~\eqref{Eq1} shows that $\theta_i (x_1)=0$ for all $i=2,...,\ell$, and 
%Again by~\eqref{Eq1} and $\pdeg\theta_1 \geq m_0$ we can write 
$\theta_1(x_1)=p  x_1^{m_0} $ for some $p\in S$. Hence 
%Thus, we obtain by~\eqref{Eq4} and expanding the determinant along the first row 
\begin{equation}\label{Eq6}
\Q\Am=\det M(\theta_1,...,\theta_\ell)=p x_1^{m_0} \det
\begin{bmatrix}
 \theta_2(x_2)& \cdots	 & \theta_\ell(x_2)   \\
 \vdots & & \vdots \\
 \theta_2(x_\ell)	&  \cdots	 & \theta_\ell(x_\ell)      
\end{bmatrix}.
\end{equation}
Let $\theta_i':=\theta_i\mid_{x_1=0}$ be the restricted derivations for $i=2,...,\ell$. Due to Lemma~\ref{Le1}, $\theta_i' \in  D(\A^{H_0},m^{H_0})$ for all $i=2,...,\ell$. Therefore, we obtain from Theorem~\ref{PreTh2}
\begin{equation}\label{Eq7}
\det M(\theta_2',...,\theta_\ell') \in \Q(\A^{H_0},m^{H_0}) S.
\end{equation}
Since $x_1^{m_0}$ is the highest power of $x_1$ in $\Q\Am$,~\eqref{Eq6} implies
\begin{equation}\label{Eq8}
\det M(\theta_2',...,\theta_\ell') =\det 
\begin{bmatrix}
 \theta_2(x_2)& \cdots	 & \theta_\ell(x_2)   \\
 \vdots & & \vdots \\
 \theta_2(x_\ell)	&  \cdots	 & \theta_\ell(x_\ell)      
\end{bmatrix}\vert_{x_1=0}
\neq 0.
\end{equation}
This shows already $\pdeg \theta_i = \pdeg \theta_i'$ for $i=2,...,\ell$ and $\theta_2',\dots,\theta_\ell'$ are independent over $S/ (x_1)$. By~\eqref{Eq6} we have
\begin{equation}\label{Eq9}
\deg \det M(\theta_2',...,\theta_\ell') = \deg\Q\Am- \deg p x_1^{m_0} =|m|-m_0 -\deg p.
\end{equation}
On the other hand, we obtain by~\eqref{Eq7} together with~\eqref{Eq8}
\[ \deg \det M(\theta_2',...,\theta_\ell') \geq \deg \Q(\A^{H_0},m^{H_0})= |m| -m_0.\]
This inequality, together with~\eqref{Eq9}, yields $\deg p =0$, which shows that $\theta_1$ is a good summand to $H_0$. 
By~\eqref{Eq5} it holds that
\[|m^{H_0}|=|m|-m_0=\sum_{i=2}^\ell \deg \theta_i=\sum_{i=2}^\ell \deg \theta_i'.\]
So Theorem~\ref{PreTh2} immediately implies that 
$\{ \theta_2',...,\theta_\ell'\}$ forms a basis of $D(A^{H_0},m^{H_0})$.
\end{proof}

For general multiarrangements, we can show the following:

\begin{prop}\label{Prop2}
Let $\Am$ multiarrangement with $H_0\in\A$. Then $(\A,m+k\delta_{H_0})$ has a good summand to $H_0$ for all $k\gg 0$.
\end{prop}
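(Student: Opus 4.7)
The plan is to reformulate the existence of a good summand as a membership statement in a graded ideal, and then verify it geometrically. Set $\alpha_0 := \alpha_{H_0}$, $m_0 := m(H_0)$, and $\tilde D := D(\A \setminus \{H_0\}, m|_{\A \setminus \{H_0\}})$, so that $D(\A, m + k\delta_{H_0}) = \{\theta \in \tilde D : \theta(\alpha_0) \in \alpha_0^{m_0+k} S\}$. Consider the graded ideal $I := \{\theta(\alpha_0) : \theta \in \tilde D\} \subset S$. A good summand to $H_0$ in $(\A, m + k\delta_{H_0})$ amounts (up to a nonzero scalar) to a homogeneous $\theta \in \tilde D$ of polynomial degree $m_0+k$ with $\theta(\alpha_0) = \alpha_0^{m_0+k}$; since $\tilde D$ is graded, extracting the correct homogeneous part shows this is equivalent to $\alpha_0^{m_0+k} \in I$. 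Hence it suffices to find some $N$ with $\alpha_0^N \in I$, as then all $k \ge N - m_0$ work.

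By Hilbert's Nullstellensatz (applied over $\overline{\K}$ and descended by faithful flatness, if $\K$ is not algebraically closed), it suffices to show $V(I) \subset H_0$. Fix any $p \in V \setminus H_0$ and set $\A_p := \{H \in \A : H \ni p\}$ (so $H_0 \notin \A_p$) and $W := \mathrm{span}(\alpha_H : H \in \A_p) \subset V^*$. Since $p \in X_p := \bigcap_{H \in \A_p} H$ and $p \notin H_0$, the linear subspace $X_p$ is not contained in $H_0$; equivalently, $\alpha_0 \notin \mathrm{Ann}(X_p) = W$. Thus there exists $v \in V$ with $\alpha_H(v) = 0$ for every $H \in \A_p$ and $\alpha_0(v) \neq 0$.

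Setting $Q_p := \prod_{H \in \A \setminus (\A_p \cup \{H_0\})} \alpha_H^{m(H)}$ and $\theta := Q_p \cdot v$ (viewing $v$ as a constant-coefficient derivation on $S$), one checks $\theta \in \tilde D$: for $H \notin \A_p \cup \{H_0\}$, the factor $\alpha_H^{m(H)}$ divides $Q_p$; for $H \in \A_p$, $v(\alpha_H) = 0$, so $\theta(\alpha_H) = 0$. Evaluating at $p$ yields $\theta(\alpha_0)(p) = Q_p(p) \cdot \alpha_0(v) \neq 0$, since $\alpha_H(p) \neq 0$ for $H \notin \A_p$ and $\alpha_0(v) \neq 0$ by construction. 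Therefore $p \notin V(I)$, completing the proof that $V(I) \subset H_0$.

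The main obstacle is identifying the right framework: recognizing that the existence of a good summand reduces to $\alpha_0^N \in I$ for some $N$, and then geometrically to $V(I) \subset H_0$. Once reformulated, the construction of $\theta = Q_p \cdot v$ is natural---$v$ is forced to annihilate the hyperplanes passing through $p$, while multiplication by $Q_p$ imposes the conditions from the remaining hyperplanes.
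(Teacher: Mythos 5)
Your proof is correct, but it takes a genuinely different route from the paper's. The paper argues by completion to a supersolvable arrangement: it builds $\B\supset\A$ with a supersolvable filtration whose first piece is $\{H_0\}$, chooses a multiplicity $m_\B\ge m$ large enough that the supersolvable freeness criterion (Theorem~\ref{PreTh1}) applies and $H_0$ is heavy in $(\B,m_\B)$, invokes Theorem~\ref{Th1} to get a good summand $\theta_0\in D(\B,m_\B)$, and then uses the inclusion $D(\B,m_\B)\subset D(\A,m+k\delta_{H_0})$. You instead reduce the existence of a good summand to the membership $\alpha_{H_0}^{m_0+k}\in I$, where $I$ is the graded ideal of values $\theta(\alpha_{H_0})$ as $\theta$ ranges over the derivation module of $(\A\setminus\{H_0\},m)$, and then verify $V(I)\subset H_0$ pointwise: for $p\notin H_0$ your derivation $Q_p\cdot v$ does lie in that module and satisfies $\theta(\alpha_{H_0})(p)\ne 0$, because the linear-algebra step ($\alpha_{H_0}\notin\operatorname{span}\{\alpha_H : p\in H\}$, equivalently $\bigcap_{p\in H}H\not\subset H_0$) is exactly what is needed to choose $v$. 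The extraction of the homogeneous component of degree $m_0+k$ and the fact that $I$ is an ideal (so that one exponent $N$ suffices for all larger ones) are both handled correctly, and you rightly flag the base change to $\overline{\K}$ required to apply the Nullstellensatz over a general field. Your argument is more elementary and self-contained, avoiding the supersolvable addition--deletion machinery and the use of Saito's criterion hidden inside Theorem~\ref{Th1}; it is non-effective in $N$, but the paper's choice of $k$ is equally non-effective, so nothing is lost for a statement about $k\gg 0$.
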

\begin{proof}
Set $X_i:=\cap_{j=1}^i \{x_j=0\}$ ($i=1,\ldots,r$), where 
$x_1,\ldots,x_\ell$ form a $\K$-basis of $V^*$ and $r=\rank (\A)$.
We may assume that $H_0 = X_1$ and $H \supset X_r$ for all $H\in\A$.
By definition, for $H \in \A_{X_i} \setminus \A_{X_{i-1}}$, $\alpha_H$ can be chosen to be of the form $\alpha_H = a_1x_1+\ldots +a_{i-1}x_{i-1}+x_i$. 
Then $\A_i:=\A_{X_i}$ ($i=1,\ldots,r$) defines a filtration $\A=\A_r \supset \A_{r-1} \supset \ldots \supset\A_2\supset\A_1$ of $\A$. \\
We can obtain a supersolvable arrangement $\B$ with a filtration $\B=\B_r \supset \B_{r-1} \supset \ldots \supset\B_2\supset\B_1$ such that $\A_i \subset \B_i$ for $i=1,\ldots,r$ by adding hyperplanes to the filtered pieces $\A_i$ as follows:
First set $\B_r := \A_r$. Now inductively define $\B_{i}$ ($i=r-1,\ldots,2$) to be $\A_i$ with the additional hyperplanes $H'$
with $H'' \supset H \cap H'$ if $H'' \notin \B_{i}$, where $H,H'\in \B_{i+1} \setminus \A_{i}$ are distinct hyperplanes.
Finally, set $\B_1:=\A_1$ and the arrangement $\B$ is supersolvable by Definition~\ref{PreDef1}.\\
Now choose a multiplicity $m_\B$ on $\B$ such that $m_\B(H) \ge m(H)$ for any $H\in\A$, the hyperplane $H_0$ is heavy in $(\B,m_\B)$, and $(\B,m_\B)$ satisfies the assumptions of Theorem~\ref{PreTh1} as follows. First, we note that, for any distinct hyperplanes $H',H'' 
\in \A_r\setminus \A_{r-1}$, there exists the unique $L 
\in \A_{r-1}$ such that $L \supset H' \cap H''=:X$. Now replace 
$m(L)$ by $m(L) +k$ for sufficiently large $k$ in such a way that 
$\sum_{X \subset H \in \A_r \setminus \A_{r-1}} m(H) \le m(L)+k$. Proceed this 
replacement to obtain the desired new multiplicity $m_\B$.
Hence Theorem~\ref{PreTh1} implies that $(\B,m_\B)$ is free and Theorem~\ref{Th1} yields that the free and unbalanced multiarrangement $(\B,m_\B)$ has a good summand $\theta_0\in D(\B,m_\B)$ 
to $H_0$. 
Now choose $k\in\Z$ such that $m_\B(H_0)=(m+k\delta_{H_0})(H_0)$. Since 
$D(\A,m+k\delta_{H_0})\supset D(\B,m_\B)$ by definition of logarithmic vector fields, 
it holds that $\theta_0 \in D(\A,m+k\delta_{H_0})$, 
which completes the proof. 
\end{proof}

By~\eqref{PreEq1} the Euler multiplicity coincides with the Ziegler multiplicity for unbalanced multiarrangements.

\begin{prop}\label{Prop1}
Let $\Am$ be a multiarrangement with $H\in \A$ such that $H$ is the heavy hyperplane with multiplicity $m_0=m(H)$. Furthermore choose $k \in  \Z$ in such a way that $(\A,m+k\delta_H)$ is still unbalanced. Note that $k$ is allowed to be negative. Then $\Am$ is free if and only if $(\A,m+k\delta_H)$ is free.
\end{prop}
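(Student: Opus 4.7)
The plan is to induct on $|k|$, reducing to the unit case $|k|=1$, and to invoke the addition--deletion theorem (Theorem~\ref{PreTh3}) together with Theorem~\ref{Th1}. The pivotal observation is that the Euler--Ziegler multiplicity $m^H(X)=\sum_{H'\in\A_X\setminus\{H\}}m(H')$ depends only on the values of $m$ away from $H$, so $(m+j\delta_H)^H=m^H$ for every integer $j$; moreover, by~\eqref{PreEq1}, whenever $H$ remains heavy the Euler multiplicity $m^*$ appearing in Theorem~\ref{PreTh3} agrees with $m^H$. Thus both the restricted arrangement and its multiplicity are \emph{invariant} along the path $m+j\delta_H$, and only the exponent attached to $H$ itself changes.

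First I would check that every intermediate multiplicity $m+j\delta_H$ (with $j$ between $0$ and $k$) is itself unbalanced with $H$ heavy. Rewriting the unbalanced condition as $2m(H)+j\ge |m|$ shows that it is monotone in $j$, so it propagates automatically from the two endpoint hypotheses to every intermediate $j$; hence at every step Theorem~\ref{Th1} is applicable.

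For the inductive step, write $m_j:=m+j\delta_H$ and treat $k>0$ (the case $k<0$ is symmetric). Suppose $(\A,m_j)$ is free; by Theorem~\ref{Th1}, $\exp(\A,m_j)=(m_j(H),e_2,\ldots,e_\ell)$ for some $e_i$, and $(\A^H,m^H)$ is free with exponents $(e_2,\ldots,e_\ell)$. Since $m_{j+1}(H)\ge 2$, Theorem~\ref{PreTh3} applies to $(\A,m_{j+1})$ with deletion $(\A,m_j)$ and Euler restriction $(\A^H,m^*)=(\A^H,m^H)$; conditions (2) and (3) of that theorem then yield (1), namely that $(\A,m_{j+1})$ is free with exponents $(m_{j+1}(H),e_2,\ldots,e_\ell)$. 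The converse direction is symmetric: freeness of $(\A,m_{j+1})$ together with Theorem~\ref{Th1} again gives freeness of $(\A^H,m^H)$, whereupon conditions (1) and (3) of Theorem~\ref{PreTh3} deliver (2), i.e.\ freeness of $(\A,m_j)$.

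The only genuine obstacle is bookkeeping: one has to confirm that $m_{j+1}(H)\ge 2$ at every stage (so that the ``deletion'' in Theorem~\ref{PreTh3} is a multiplicity decrease and not the removal of $H$ from $\A$), and that the $\ell-1$ trailing exponents produced by Theorem~\ref{Th1} genuinely match those of $(\A^H,m^H)$ in both directions of the argument. Both are automatic from the endpoint unbalanced hypothesis: $m_{j+1}(H)\ge m(H)+1\ge 2$ for $k>0$, while for $k<0$ one has $m_j(H)\ge 1$ throughout, so $m_{j+1}(H)=m_j(H)+1\ge 2$; and the restriction $(\A^H,m^H)$ is $j$-independent by the observation above. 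The induction therefore closes.
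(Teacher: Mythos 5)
Your proof is correct and follows essentially the same route as the paper: freeness of the Euler--Ziegler restriction via Theorem~\ref{Th1}, identification of $m^*$ with $m^H$ through~\eqref{PreEq1} under heaviness, and repeated application of Theorem~\ref{PreTh3} along the path $m+j\delta_H$. Your explicit check that heaviness propagates monotonically in $j$ and that $m_{j+1}(H)\ge 2$ simply fills in bookkeeping the paper leaves implicit.
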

\begin{proof}
Assume that $\Am$ is free. 
Theorem~\ref{Th1} then implies that $\exp \Am = (m_0,d_2,\ldots d_\ell)$ holds and $(\A^{H_0},m^{H_0})$ is also free with $\exp (\A^{H_0},m^{H_0}) = (d_2,\ldots d_\ell)$. 
This Euler-Ziegler restriction 
$(\A^{H_0},m^{H_0})$ coincides with the Euler
restriction $(\A^{H_0},m^*)$ on $\A^{H_0}$ since $H_0$ is a heavy hyperplane. 
So repeated applications of Theorem~\ref{PreTh3} yields the freeness of $(\A,m+k\delta_H)$ since this arrangement is still heavy by assumption. Conversely, if one assumes that $(\A,m+k\delta_H)$ is free, the same argument shows the freeness of $\Am$.
\end{proof}

\begin{rem}\label{Rem2}
With the above notation, the Euler-Ziegler multiplicity $(\A^{H_0},m^{H_0})$ coincides with the Euler multiplicity $(\A^{H_0},m^*)$ on $\A^{H_0}$ by~\eqref{PreEq1} as long as $H_0$ is a locally heavy hyperplane. So, if $\Am$ is a free unbalanced multiarrangement, the above proof shows that $(\A,m+k\delta_H)$ is also a free multiarrangement for all $k \in \Z$ such that $H_0$ is a locally heavy hyperplane in $(\A,m+k\delta_H)$.
\end{rem}

\begin{theorem}\label{Th2}
Let $\Am$ be a multiarrangement with a fixed hyperplane $H_0 \in \A$. Assume that $\Am$ has a good summand $\theta_0\in D\Am$ with respect to $H_0$. Then
\[
(t-m_0) | \chi(\A,m;t).
\]
In this case define $\chi_0(\A,m;t):= \frac{1}{ t-m_0 }\chi(\A,m;t)$ to be the \textbf{reduced characteristic polynomial} of $\Am$.
\end{theorem}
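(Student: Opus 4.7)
The plan is to use the good summand $\theta_0$ as a substitute for the Euler derivation and derive a factorization of $\psi(\A,m;x,t)$ that explicitly carries the factor $(t - m_0)$. As a first step I establish, for every $p$, the graded $S$-module decomposition
\[
D^p(\A,m) = \theta_0 \wedge D_0^{p-1}(\A,m) \oplus D_0^p(\A,m).
\]
Given $\delta \in D^p(\A,m)$, the definition of $D^p$ gives $\delta(\alpha_{H_0}, f_2, \ldots, f_p) \in \alpha_{H_0}^{m_0} S$, so $\eta(f_2,\ldots,f_p) := \delta(\alpha_{H_0}, f_2, \ldots, f_p)/\alpha_{H_0}^{m_0}$ is a well-defined alternating $(p-1)$-derivation. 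A direct check using the alternating property of $\delta$ and coprimality of the $\alpha_H$'s shows $\eta \in D_0^{p-1}(\A,m)$, and using $\theta_0(\alpha_{H_0}) = \alpha_{H_0}^{m_0}$ together with $\eta \in D_0^{p-1}$ one computes $(\theta_0 \wedge \eta)(\alpha_{H_0}, f_2, \ldots, f_p) = \alpha_{H_0}^{m_0}\eta(f_2,\ldots,f_p) = \delta(\alpha_{H_0}, f_2, \ldots, f_p)$, so $\delta - \theta_0 \wedge \eta \in D_0^p(\A,m)$. Directness of the sum follows because $(\theta_0 \wedge \eta)(\alpha_{H_0}, -) = \alpha_{H_0}^{m_0}\eta$ vanishes only when $\eta = 0$. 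I also verify $D_0^\ell(\A,m) = 0$ via the identification $\Der^\ell(S) \cong S$ together with the non-vanishing of some partial derivative of $\alpha_{H_0}$.

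Since $\theta_0$ is homogeneous of polynomial degree $m_0$, the decomposition is graded and yields
\[
H(D^p(\A,m), x) = x^{m_0} H(D_0^{p-1}(\A,m), x) + H(D_0^p(\A,m), x),
\]
with the conventions $D_0^{-1} = D_0^\ell = 0$. Setting $s := t(x-1) - 1$ and $\psi_0(x,t) := \sum_{p=0}^{\ell-1} H(D_0^p(\A,m), x) s^p$, a re-indexing of the defining sum of $\psi$ gives
\[
\psi(\A,m;x,t) = (1 + x^{m_0} s) \psi_0(x,t).
\]
A telescoping identity factorizes $1 + x^{m_0} s = (x-1) Q(x,t)$, where $Q(x,t) := t x^{m_0} - (1 + x + \cdots + x^{m_0-1})$. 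Then $Q(1,t) = t - m_0$, and $Q$ is irreducible in $\K[x,t]$: it is linear in $t$ with coefficients $x^{m_0}$ and $-\sum_{i=0}^{m_0-1} x^i$, which are coprime in $\K[x]$ since the latter does not vanish at $x=0$.

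The key step is to show that the irreducible polynomial $Q(x,t)$ divides $\psi(\A,m;x,t)$ in $\K[x,t]$. By the theorem of Solomon--Terao extended to multiarrangements by ATW we have $\psi(\A,m;x,t) \in \K[x,t]$. Each Hilbert series $H(D_0^p(\A,m), x)$ has the form $P_p(x)/(1-x)^\ell$ with $P_p \in \Z[x]$, so $\psi_0(x,t)$ is a rational function regular on the open set $\{x \neq 1\}$. On the Zariski-dense open subset $\{Q = 0\} \cap \{x \neq 1\}$ of the irreducible hypersurface $\{Q = 0\}$, the factor $1 + x^{m_0} s$ vanishes and $\psi_0$ is regular, so $\psi = (1 + x^{m_0} s) \psi_0 = 0$ there; by irreducibility, $\psi$ vanishes identically on $\{Q = 0\}$, hence $Q \mid \psi$ in $\K[x,t]$. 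Evaluating the polynomial $\psi/Q$ at $x = 1$ then yields
\[
\left.\frac{\psi(\A,m;x,t)}{Q(x,t)}\right|_{x=1} = \frac{\psi(\A,m;1,t)}{Q(1,t)} = \frac{(-1)^\ell \chi(\A,m;t)}{t - m_0} \in \K[t],
\]
which proves $(t - m_0) \mid \chi(\A,m;t)$ and allows one to define $\chi_0(\A,m;t) := \chi(\A,m;t)/(t - m_0)$.

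The main obstacle is the construction of the graded splitting of $D^p(\A,m)$ in the first step: verifying that $\eta$ lands in $D_0^{p-1}$, that $\delta - \theta_0 \wedge \eta \in D_0^p$, and that $D_0^\ell = 0$ are the technical heart of the argument. Once the decomposition is established, the factorization $\psi = (1 + x^{m_0} s) \psi_0$ is bookkeeping, and the divisibility $Q \mid \psi$ drops out cleanly from the irreducibility of $Q$ together with the polynomiality of $\psi$ from ATW and the regularity of $\psi_0$ away from $x = 1$.
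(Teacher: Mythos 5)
Your proof is correct, and it reaches the conclusion by a mildly different route than the paper. The paper defines the contraction $\p\colon D^p\Am\to D^{p-1}\Am$, $\p\theta=\theta(\alpha_{H_0},-)/\alpha_{H_0}^{m_0}$, and uses the good summand via the Leibniz rule to show the resulting complex is acyclic; the Euler-characteristic identity for Hilbert series then says exactly that $\psi(\A,m;x,t)$ vanishes when $t=(\sum_{i=0}^{m_0-1}x^i)/x^{m_0}$ (i.e.\ along your curve $Q(x,t)=0$), and since $\psi$ is a polynomial one substitutes $x=1$ to get $\chi(\A,m;m_0)=0$, which already gives $(t-m_0)\mid\chi$. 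Your graded splitting $D^p\Am=\theta_0\wedge D_0^{p-1}\Am\oplus D_0^p\Am$ is equivalent data to that acyclicity (the paper itself uses the corresponding short exact sequences $0\to D_0^p\Am\to D^p\Am\to D_0^{p-1}\Am\to 0$ in the proof of Proposition~\ref{Prop4}), and your factorization $\psi=(x-1)Q\psi_0$ with $Q$ irreducible upgrades the paper's pointwise vanishing to a two-variable divisibility $Q\mid\psi$ before specializing at $x=1$; this is a stronger intermediate statement but not needed, since for a one-variable polynomial $\chi$ the root $t=m_0$ already yields the divisibility. One presentational caveat: your Zariski-density argument for $Q\mid\psi$ tacitly assumes enough $\K$-points on $\{Q=0\}$; over an arbitrary (possibly finite) field it is cleaner to argue algebraically, namely choose $N$ with $(1-x)^N\psi_0\in\K[x,t]$, so $(1-x)^N\psi=(x-1)Q\,(1-x)^N\psi_0$ gives $Q\mid(1-x)^N\psi$, and since $Q$ is irreducible with $Q(1,t)=t-m_0\neq 0$ we get $Q\nmid(x-1)$, hence $Q\mid\psi$. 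With that small adjustment the argument is complete.
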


The proof of Theorem~\ref{Th2} 
is similar to the one for simple arrangements in Proposition 4.4, 5.4 and 5.5 in~\cite{ST} by replacing the 
role played by the Euler derivation by the derivation with respect to the good summand. 
We give it here for the completeness.
\begin{proof}
Let $\alpha$ be the defining equation of the fixed hyperplane $H_0$. By definition of $D^p\Am$ we have $\theta(\alpha , f_2 , \ldots , f_p) \in \alpha^{m_0} S$, for $\theta \in D^p\Am$ and $f_2,\ldots,f_p\in S$. Thus we may define 
$\p : D^p \Am \rightarrow D^{p-1}\Am$ by
\[
\p(\theta) (f_2,\ldots , f_p) := \frac{ (i( \alpha )\theta)( f_2 , \ldots , f_p)}{\alpha^{m_0}}=\frac{\theta(\alpha , f_2 , \ldots , f_p)}{\alpha^{m_0}},
\]
where 
$i( \alpha ): \Der^p (S) \rightarrow \Der^{p-1} (S)$ is 
defined as $(i( \alpha ) \delta)(f_2,\ldots,f_p) := 
\delta (\alpha,f_2,\ldots,f_p)$ for $\delta \in \Der^p (S)$ and $f_i \in S$.
Thus, this map $\p$ is homogeneous of degree $-m_0$ with 
$\p(\phi \wedge \psi )=(\p\phi )\wedge \psi + (-1)^p \phi 
\wedge(\p\psi)$ for $\phi \in D^p\Am$ and $\psi \in D^q\Am$.
Since $\p \circ \p = 0$, this yields a complex
\begin{equation}\label{Eq10}
\xymat{
0 \ar[r] &D^\ell \Am \ar[r]^-{\p}& D^{\ell-1} \Am  \ar[r]^-{\p}&    \cdots  \ar[r]^-{\p}& D^0\Am \ar[r]& 0
}
\end{equation}
Now we use the good summand $\theta_0$ to show that this complex is acyclic. 
Let $\delta \in D^p\Am$ be a cycle, i.e., $\p \delta=0$. 
By the definition of $D^p\Am$ we know that $\theta_0 \wedge \delta \in D^{p+1}\Am$ which yields with the above skew-commutativity
\begin{align*}
\p(\theta_0 \wedge \delta) =& \p(\theta_0) \wedge \delta +(-1)^p \theta_0 \wedge \p(\delta) \\
=& \frac{\alpha^{m_0}}{\alpha^{m_0}} \delta + 0 = \delta.
\end{align*}
Hence, $\delta$ is a boundary and the complex~\eqref{Eq10} is acylic.\\
Let $H(D^p\Am,x)$ be the Hilbert series of the module $D^p\Am$. 
Since the complex~\eqref{Eq10} is acyclic and the boundary map 
$\p$ is of degree $-m_0$, %we obtain the following for the Hilbert series
it holds that 
\[
0 =  \sum_{p=0}^\ell (-1)^p H(D^p\Am,x)x^{m_0(\ell-p)}.
\]
Simplifying and dividing by $x^{m_0\ell}$ yields
\[
0 =  \sum_{p=0}^\ell  H(D^p\Am,x)\left(\frac{-1}{x^{m_0}}\right)^p.
\]
Using the definition
$\psi(\A,m;x,t):= \sum_{p=0}^\ell H(D^p(\A,m),x)(t(x-1)-1)^p $, we obtain:
\[
0= \psi \left( \A,m;x,\frac{\sum_{i=0}^{m_0-1} x^i}{x^{m_0}}\right)
\]
Since $\psi(\A,m;x,t)$ is a polynomial in $t$ and $x$ by \cite[Theorem 2.5]{ATW2},
we can substitute $x=1$ to obtain 
\[0 = (-1)^\ell \psi \left( \A,m;1,m_0\right)=\chi(\A,m;m_0)\]
by the definition of the characteristic polynomial $\chi (\A,m;t)=(-1)^\ell \psi (\A,m;1,t)$.
\end{proof}

In the situation of a multiarrangement with a good summand we can show the following.

\begin{lemma}\label{Le2}
Let $\Am$ be a multiarrangement with a good summand with respect to $H_0\in\A$ and $\chi_0(\A,m;t)$ the reduced characteristic polynomial. Then it holds that
\[\chi_0(\A,m;t) = t^{\ell-1} - (|m|-m_0)t^{\ell-1} +  b_2^{H_0} \Am t^{\ell-2} + \cdots +(-1)^\ell \chi_0(\A,m;0).\]
\end{lemma}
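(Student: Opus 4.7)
The plan is to derive the coefficients of $\chi_0(\A,m;t)$ by polynomial division, using the divisibility guaranteed by Theorem~\ref{Th2}. Since the Betti numbers of $(\A,m)$ already encode the coefficients of $\chi(\A,m;t)$, the argument reduces to matching coefficients on both sides of the identity $\chi(\A,m;t)=(t-m_0)\chi_0(\A,m;t)$.

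Concretely, I would expand
\[
\chi(\A,m;t) = t^\ell - b_1(\A,m)t^{\ell-1} + b_2(\A,m)t^{\ell-2} - \cdots + (-1)^\ell b_\ell(\A,m),
\]
and recall that $b_1(\A,m)=|m|$ by the local-global formula in Proposition~\ref{PrePro2} (which in rank one gives exactly $m(H)$ at each $H$). Writing
\[
\chi_0(\A,m;t) = t^{\ell-1} + c_1 t^{\ell-2} + c_2 t^{\ell-3} + \cdots + c_{\ell-1},
\]
with leading coefficient $1$ (forced by matching leading terms of both sides), I would then expand $(t-m_0)\chi_0(\A,m;t)$ and read off coefficients degree by degree.

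The $t^{\ell-1}$ coefficient gives $c_1-m_0=-|m|$, so $c_1=-(|m|-m_0)$. The $t^{\ell-2}$ coefficient gives $c_2-m_0 c_1=b_2(\A,m)$, so $c_2 = b_2(\A,m) - m_0(|m|-m_0)$, which equals $b_2^{H_0}(\A,m)$ by Definition~\ref{reducedb2}. The signs of subsequent coefficients alternate in the standard way, and the constant term is of course $\chi_0(\A,m;0)$, matching the right-hand side of the claimed formula.

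There is no real obstacle: the statement is a mechanical consequence of the divisibility from Theorem~\ref{Th2} together with the definition of $b_2^{H_0}(\A,m)$. The only conceptual point is that dividing by $t-m_0$ shifts and compresses the Betti-number data in precisely the manner captured by the definition of $b_2^{H_0}(\A,m)$, so that the top three coefficients of the quotient are $1$, $-(|m|-m_0)$, and $b_2^{H_0}(\A,m)$ as asserted.
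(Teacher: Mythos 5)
Your proposal is correct and matches the paper's own argument: the paper likewise writes $\chi(\A,m;t)=t^\ell-|m|t^{\ell-1}+b_2(\A,m)t^{\ell-2}+\cdots$, uses $\chi(\A,m;t)=(t-m_0)\chi_0(\A,m;t)$ from Theorem~\ref{Th2}, and compares coefficients exactly as you do. Nothing is missing; your coefficient computation $c_1=-(|m|-m_0)$ and $c_2=b_2(\A,m)-m_0(|m|-m_0)=b_2^{H_0}(\A,m)$ is precisely the content of the lemma.
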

\begin{proof}
By definition, $\chi(\A,m;t)$ is of the form
\[
\chi(\A,m;t)=t^\ell -|m|t^{\ell-1}+b_2\Am t^{\ell-2} +\cdots +(-1)^{\ell} b_{\ell}(\A,m).
\]
Since $\chi(\A,m;t)=(t-m_0)\chi_0(\A,m;t) $, comparing coefficients completes the proof.
\end{proof}

The combination of the above results yields a result linking the freeness of an unbalanced multiarrangement with the freeness of its Euler-Ziegler restriction, 
which was first proved by Ziegler in~\cite{Z} for simple arrangements and its Ziegler restriction.

\begin{theorem}\label{Th3}
Let $\Am$ be a multiarrangement with the heavy hyperplane $H_0\in\A$. Then the following are equivalent:
\begin{itemize}
\item[(i)] $\Am$ is free.
\item[(ii)] The Euler-Ziegler restriction $(\A^{H_0},m^{H_0})$ is free and the restriction map $\mbox{res}_{H_0}^1 : D_0 \Am \rightarrow D (\A^{H_0},m^{H_0})$ defined in Lemma~\ref{Le1} is surjective.
\end{itemize}
\end{theorem}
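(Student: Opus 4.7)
The plan is to prove the two implications separately, following the template of Ziegler's original result (Theorem~\ref{PreTh4}) with a good summand to $H_0$ playing the role of the Euler derivation.

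For $(i)\Rightarrow(ii)$, I would invoke Theorem~\ref{Th1} directly: its hypotheses (freeness and heaviness) are precisely those of $(i)$, so it supplies both a good summand $\theta_0 \in D\Am$ and the freeness of $(\A^{H_0},m^{H_0})$. The good summand induces the decomposition $D\Am = S\theta_0 \oplus D_0\Am$ via the projection $\theta \mapsto \theta - \theta(\alpha_{H_0})\alpha_{H_0}^{-m_0}\theta_0$, so any homogeneous basis can be rewritten as $\theta_0,\theta_2,\ldots,\theta_\ell$ with $\theta_i \in D_0\Am$ for $i\ge 2$. The proof of Theorem~\ref{Th1} further shows that the restrictions $\theta_i|_{H_0}$ form a basis of $D(\A^{H_0},m^{H_0})$, whence $\mbox{res}_{H_0}^1$ is surjective.

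For $(ii)\Rightarrow(i)$, the crucial observation is that $D_0\Am = D_0(\A,m+k\delta_{H_0})$ for every $k\ge 0$ (the vanishing $\theta(\alpha_{H_0})=0$ renders the multiplicity at $H_0$ irrelevant) and that trivially $(\A,m+k\delta_{H_0})^{H_0} = (\A^{H_0},m^{H_0})$. Thus hypothesis $(ii)$ transfers verbatim to $(\A,m+k\delta_{H_0})$ for every $k\ge 0$. Choose $k\gg 0$ so that Proposition~\ref{Prop2} supplies a good summand $\tilde\theta_0 \in D(\A,m+k\delta_{H_0})$ of polynomial degree $m_0+k$. Using the surjective (and graded) restriction map, lift a homogeneous basis $\phi_2,\ldots,\phi_\ell$ of $D(\A^{H_0},m^{H_0})$ (of total polynomial degree $|m|-m_0$) to derivations $\theta_2,\ldots,\theta_\ell \in D_0\Am$ of the same polynomial degrees. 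I then claim $\{\tilde\theta_0,\theta_2,\ldots,\theta_\ell\}$ is a basis of $D(\A,m+k\delta_{H_0})$, which I verify via Saito's criterion (Theorem~\ref{PreTh2}): in coordinates with $\alpha_{H_0}=x_1$, the identities $\theta_i(x_1)=0$ for $i\ge 2$ let me expand $\det M(\tilde\theta_0,\theta_2,\ldots,\theta_\ell)$ along the first row to get $\alpha_{H_0}^{m_0+k}\det M'$, where $M'=(\theta_i(x_j))_{2\le i,j\le\ell}$. Since $M'\bmod x_1 = M(\phi_2,\ldots,\phi_\ell)$ has determinant a nonzero scalar multiple of $\Q(\A^{H_0},m^{H_0})$ (again by Saito for $(\A^{H_0},m^{H_0})$), the total determinant is nonzero, and the polynomial degrees add to $(m_0+k)+(|m|-m_0)=|m+k\delta_{H_0}|$, establishing freeness of $(\A,m+k\delta_{H_0})$. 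Since $m_0\ge |m|-m_0$ implies that $H_0$ remains heavy in every intermediate $(\A,m+j\delta_{H_0})$ for $0\le j\le k$, Proposition~\ref{Prop1} iteratively descends the freeness to $\Am$.

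The principal obstacle is that a good summand for $\Am$ itself is not immediately at hand: Proposition~\ref{Prop2} only produces one after inflating the multiplicity at $H_0$. The shift-and-descend strategy above sidesteps this via Proposition~\ref{Prop1}, whose applicability is guaranteed by the preservation of heaviness under increasing $m(H_0)$. Verifying Saito's criterion then reduces to checking that the residue determinant modulo $x_1$ is nonzero, and this is precisely what the surjectivity of $\mbox{res}_{H_0}^1$ secures.
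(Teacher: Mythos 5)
Your proposal is correct and follows essentially the same route as the paper: the forward direction via Theorem~\ref{Th1}, and the converse by inflating the multiplicity at $H_0$ (Proposition~\ref{Prop2}) to obtain a good summand, noting that $D_0(\A,m)$, the Euler--Ziegler restriction, and the restriction map are unchanged under this shift, lifting a basis through the surjection, applying Saito's criterion, and descending with Proposition~\ref{Prop1}. The only difference is cosmetic --- the paper reduces at the outset to the case where $(\A,m)$ itself has a good summand, whereas you work in the shifted multiarrangement and descend at the end --- and your determinant computation merely spells out what the paper leaves implicit.
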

\begin{proof}
Proposition~\ref{Prop2} enables us to choose $k>0$ such that $(\A,m+k\delta_{H_0})$ has 
a good summand to $H_0$. 
By Proposition~\ref{Prop1} $\Am$ is free if and only if $(\A,m+k\delta_{H_0})$ is free. 
Since $(\A^{H_0},m^{H_0})=(\A^{H_0},(m+k\delta_{H_0})^{H_0})$, $D_0 \Am =D_0 (\A,m+k\delta_{H_0}) $ and the restricition maps coincide, we may 
assume that $\Am$ itself has a good summand $\theta_0 \in D\Am$ to $H_0$.

The implication $(i)\Rightarrow (ii)$ was shown in Theorem~\ref{Th1}. 
For the converse, assume that the map $\mbox{res}_{H_0}^1$ is surjective, $(\A^{H_0},m^{H_0})$ is free and $\theta_1,\ldots \theta_{\ell-1}$ form a basis of $ D (\A^{H_0},m^{H_0})$. By the surjectivity of $\mbox{res}_{H_0}^1$, we can find derivations $\theta'_i \in D_0\Am$ for $i=1,\ldots,\ell-1$ such that $\mbox{res}_{H_0}^1 (\theta'_i)=\theta_i$. Then by Saito's criterion~\ref{PreTh2}, 
the derivations 
$\theta_0,\theta'_1,\ldots,\theta'_{\ell-1}$ form a basis of $D\Am$. Hence $\Am$ is free.
\end{proof}

Now we can give generalizations of Theorem 2.2 in \cite{Y2} and 
Theorem 3.2 in \cite{Y} in the following. 

\begin{prop}\label{Prop3}
Let $\Am$ be a multiarrangement in $V=\K^\ell$ with a fixed heavy hyperplane $H_0$ and $\ell \geq 4$. Assume that 
$\A$ is essential. Then $\Am$ is free if and only if $(\A^{H_0},m^{H_0})$ is free and $\Am$ is locally free along $H_0$, i.e., $(\A_X,m_X)$ is free for all $X\in L(\A)$ with $X\subset H_0$ and $X \neq \lbrace 0 \rbrace$. 
\end{prop}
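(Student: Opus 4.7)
The ``only if'' direction is immediate: by Theorem~\ref{Th3} the Euler--Ziegler restriction $(\A^{H_0},m^{H_0})$ is free, and by Proposition~\ref{PrePro3} each localization $(\A_X,m_X)$ with $X\subset H_0$ is free. All the substance is in the ``if'' direction, whose proof I would model on the proof of Theorem~\ref{Th3} with the good summand playing the role of the Euler derivation, in analogy with Yoshinaga's arguments in~\cite{Y,Y2}.

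The first step is a reduction to the good-summand case. By Proposition~\ref{Prop2} I choose $k\gg 0$ so that $(\A,m+k\delta_{H_0})$ has a good summand $\theta_0$ to $H_0$, and by Proposition~\ref{Prop1} this replacement preserves freeness of $(\A,m)$. The other two hypotheses survive the replacement as well: the Euler--Ziegler restriction onto $H_0$ depends only on $m(H)$ for $H\neq H_0$, and for each $X\subset H_0$ with $X\ne\{0\}$ the hyperplane $H_0$ is still heavy in $(\A_X,m_X)$ (because $|m_X|\le|m|$ and $2m(H_0)\ge|m|$), so Proposition~\ref{Prop1} applied at the localization preserves its freeness. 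Hence I may assume $(\A,m)$ itself has a good summand $\theta_0$, so that $D(\A,m)=S\theta_0\oplus M$ with $M:=D_0(\A,m)$, and by Theorem~\ref{Th3} it suffices to show that the restriction map
\[
\mathrm{res}_{H_0}^1\colon M\longrightarrow N:=D(\A^{H_0},m^{H_0})
\]
is surjective.

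Set $\bar M:=M/\alpha_{H_0}M$ and let $C$ denote the cokernel of the injection $\bar M\hookrightarrow N$ provided by Lemma~\ref{Le1}; the goal is $C=0$. The decomposition $D(\A,m)=S\theta_0\oplus M$ translates the local-freeness hypothesis into the statement that $M_{\mathfrak{p}}$ is a free $S_{\mathfrak{p}}$-module for every prime $\mathfrak{p}\subset S$ containing $\alpha_{H_0}$ other than the irrelevant maximal ideal $\mathfrak{m}$. Consequently $\bar M\hookrightarrow N$ is an isomorphism at every prime of $\bar S:=S/(\alpha_{H_0})$ distinct from $\bar{\mathfrak{m}}$, so $C$ is an $\bar S$-module of finite length.

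The final and principal step is to promote ``$C$ has finite length'' to ``$C=0$''; this is where the hypothesis $\ell\ge 4$ enters. Since $N$ is a free $\bar S$-module of rank $\ell-1$ over a polynomial ring in $\ell-1\ge 3$ variables, $N$ is Cohen--Macaulay of depth $\ell-1\ge 3$, so $H^i_{\mathfrak{m}}(N)=H^i_{\bar{\mathfrak{m}}}(N)=0$ for $i\le\ell-2$. Chasing local cohomology through $0\to\bar M\to N\to C\to 0$ identifies $C=H^0_{\mathfrak{m}}(C)$ with a submodule of $H^1_{\mathfrak{m}}(\bar M)$, and chasing $0\to M(-1)\xrightarrow{\alpha_{H_0}}M\to\bar M\to 0$ (together with $H^1_{\mathfrak{m}}(M)=0$ from reflexivity) identifies $H^1_{\mathfrak{m}}(\bar M)$ with a submodule of $H^2_{\mathfrak{m}}(M)(-1)$. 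Thus $C=0$ follows from the depth estimate $\mathrm{depth}(M)\ge 3$. The main obstacle of the argument is precisely this depth estimate: reflexivity of $M$ only supplies $\mathrm{depth}(M)\ge 2$, and the missing unit of depth has to be extracted from the local freeness of $M$ along the codimension-one subscheme $V(\alpha_{H_0})\setminus\{0\}$ combined with the inequality $\ell-1\ge 3$, exactly as in the simple-arrangement case of~\cite{Y,Y2} where the Euler derivation plays the role of our good summand $\theta_0$. Once $\mathrm{depth}(M)\ge 3$ is established, $C=0$, the restriction map $\mathrm{res}_{H_0}^1$ is surjective, and Theorem~\ref{Th3} delivers the freeness of $(\A,m)$.
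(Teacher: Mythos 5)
Your reductions are all sound and agree with the paper's: the ``only if'' direction via Theorem~\ref{Th3} and Proposition~\ref{PrePro3}, the passage to a good summand via Propositions~\ref{Prop2} and~\ref{Prop1} (including the check that heaviness survives in every localization along $H_0$), the reformulation via Theorem~\ref{Th3} as surjectivity of $\mathrm{res}_{H_0}^1$, and the observation that local freeness along $H_0$ (through Theorem~\ref{Th3} applied to each localization, which is again unbalanced) makes the cokernel $C$ of $\bar M\hookrightarrow N$ a module of finite length. Up to that point you are following the same road as the paper.

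The gap is in the last step, and it is exactly where the content of the proposition lies. You reduce ``$C=0$'' to $\operatorname{depth}_{\mathfrak m}(D_0(\A,m))\ge 3$ and then declare that this ``has to be extracted from the local freeness of $M$ along $V(\alpha_{H_0})\setminus\{0\}$ combined with $\ell-1\ge 3$.'' That is not a proof, and local freeness along a single hyperplane cannot by itself give the missing unit of depth at the origin: the freeness of $(\A^{H_0},m^{H_0})$ must enter, and your local cohomology chase only uses it to kill $H^0_{\mathfrak m}(N)$ and $H^1_{\mathfrak m}(N)$, which is not enough. (Indeed $\operatorname{depth}(M)\ge 3$ is, for $\ell=4$, only one step short of the freeness you are trying to prove.) The paper closes this step differently: from the local surjectivity one gets the exact sheaf sequence $0\to\widetilde{D_0\Am}(-1)\to\widetilde{D_0\Am}\to\widetilde{D(\A^{H_0},m^{H_0})}\to 0$ on $\mathbf{P}^{\ell-1}$, so $\widetilde{D_0\Am}|_{H_0}\simeq\widetilde{D(\A^{H_0},m^{H_0})}$ splits into line bundles, and then the splitting criterion for reflexive sheaves (Theorem 0.2 of \cite{AY}, which is where $\ell-1\ge 3$ is used) forces $\widetilde{D_0\Am}$ itself to split, i.e.\ $D_0\Am$ is free; the surjectivity and the depth you want come out a posteriori. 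If you want to keep your local-cohomology formulation, you would have to reprove that splitting criterion (a Horrocks-type argument propagating $H^1$-vanishing from the restriction to the ambient sheaf), so as written the proposal is missing the decisive ingredient.
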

\begin{proof}
As in the above proof, we may 
assume that $\Am$ has a good summand $\theta_0 \in D\Am$ to $H_0$ by increasing the multiplicity on $H_0$. %and using the equivalence of Proposition~\ref{Prop1}. 
Thus, there is a decomposition $D\Am = S\theta_0  
\oplus D_0\Am$. Then the proof is the same as in ~\cite[Theorem 2.2]{Y2} which shows the equivalence of the local freeness and the surjectivity of the residue map. 
Hence the result follows directly from Theorem~\ref{Th3}. Here we give another proof 
by using the result in \cite{AY}.

Since the ``only if'' part is easy, we show the ``if'' part. 
Since the $S$-module $D\Am$ is reflexive, so is its direct summand $D_0\Am$. 
From now on, consider the reflexive $\mathcal{O}_{\mathbb{P}^{\ell-1}}$-module
$\widetilde{D_0\Am}$, which is the sheafification of the graded $\K[x_1,\ldots,x_\ell]$-module 
$D_0(\A,m)$. 
It is known by~\cite{MS} that the stalk $\widetilde{D_0\Am}_{\bar{x}}$ for 
$\bar{x}\in \mathbb{P}^{\ell-1}$ is isomorphic to $\widetilde{D_0 (\A_x,m_x)}_{\bar{x}}$ for $x\in V\setminus\{ 0\}$,
where $\bar{x}\in \mathbb{P}^{\ell-1}$ is the point determined by $x\in V\setminus\{ 0\}$.
For $x\in H_0\setminus\{0\}$, the multiarrangement $(\A_x,m_x)$ is 
unbalanced and free by the assumptions. Hence 
Theorem~\ref{Th3} shows that the map $\mbox{res}_{H_0}^1$ is locally surjective, i.e., surjective as a sheaf homomorphism. Thus we obtain an exact sequence of sheaves on $\mathbb{P}^{\ell-1}$:
\begin{equation}\label{ExSheaf}
\xymat{
0 \ar[r] &\widetilde{D_0\Am}(-1)  \ar[r]^-{\cdot \alpha_{H_0}} &\widetilde{D_0\Am} \ar[r]^-{\widetilde{\mbox{res}}_{H_0}^1}&   \widetilde{D\left( \A^{H_0},m^{H_0}\right)} \ar[r]& 0
}
\end{equation}
$(\A^{H_0},m^{H_0})$ is free by the assumption, 
$\widetilde{D\left( \A^{H_0},m^{H_0}\right)}$ splits into the direct sum of line bundles. 
Also, clearly 
$$
\mbox{coker}(
\widetilde{D_0\Am}(-1) \stackrel{\cdot \alpha_{H_0}}{\rightarrow}  \widetilde{D_0\Am} 
) \simeq 
\widetilde{D_0\Am}|_{H_0}.
$$
Hence $\widetilde{D\left( \A^{H_0},m^{H_0}\right)} \simeq \widetilde{D_0\Am}|_{H_0}$ 
also splits into the direct sum of line bundles. Then apply the splitting criterion of reflexive sheaves (Theorem 0.2,  
\cite{AY}) to confirm that $\widetilde{D_0\Am}$ splits, equivalently $(\A,m)$ is free.
\end{proof}

Recall the definitions of $b_2^H(\A,m)$ and $B_2^H(\A,m)$ in Definition 
\ref{reducedb2}. Let us investigate some properties of these second Betti numbers.

\begin{prop}\label{Prop4}
Let $\Am$ be a multiarrangement in $V=\K^3$ with a fixed heavy hyperplane $H_0$. Then it holds that
\begin{equation}\label{Eq11}
b_2^{H_0} \Am \geq b_2 (\A^{H_0},m^{H_0}).
\end{equation}
Furthermore, $\Am$ is free if and only if~\eqref{Eq11} is an equality.
\end{prop}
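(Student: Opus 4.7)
The plan is to reduce to the case of a good summand via Propositions~\ref{Prop1} and~\ref{Prop2}, exploit a four--term exact sequence of graded $S$--modules produced by the restriction map $\mbox{res}_{H_0}^1$ of Lemma~\ref{Le1}, and finally invoke Theorem~\ref{Th3}. First, both sides of~\eqref{Eq11} and the freeness of $(\A,m)$ are invariant under replacing $m$ by $m+k\delta_{H_0}$ for $k\ge 0$: the invariance of freeness is Proposition~\ref{Prop1}; $b_2(\A^{H_0},m^{H_0})$ depends only on $m|_{\A\setminus\{H_0\}}$; and the invariance of $b_2^{H_0}(\A,m)$ follows from the local--global formula (Proposition~\ref{PrePro2}), using that each localization $(\A_X,m_X)$ with $X\in\A^{H_0}$ is a heavy $2$--multiarrangement with $b_2(\A_X,m_X)=m_0\cdot m^{H_0}(X)$. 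By Proposition~\ref{Prop2} I may therefore assume $(\A,m)$ has a good summand $\theta_0\in D(\A,m)$ to $H_0$.

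Second, a direct computation analogous to the proof of Lemma~\ref{Le1} shows $\ker(\mbox{res}_{H_0}^1)=\alpha_{H_0}D_0(\A,m)$, producing the exact sequence
\[
0 \to D_0(\A,m)(-1) \xrightarrow{\cdot\alpha_{H_0}} D_0(\A,m) \xrightarrow{\mbox{res}_{H_0}^1} D(\A^{H_0},m^{H_0}) \to C \to 0.
\]
For every $X\in\A^{H_0}$ the localization $(\A_X,m_X)$ is still unbalanced since $2m_0\ge |m|\ge|m_X|$, so Theorem~\ref{Th3} applied locally (as in the sheafification argument of Proposition~\ref{Prop3}) shows that $\mbox{res}_{H_0}^1$ is surjective outside the origin of $H_0$. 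Hence $C$ has finite $\K$--length $c\ge 0$.

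Third, I would extract the identity $b_2^{H_0}(\A,m)=b_2(\A^{H_0},m^{H_0})+c$. Taking Hilbert series of the exact sequence yields $(1-x)H(D_0(\A,m),x)=H(D(\A^{H_0},m^{H_0}),x)-H(C,x)$. The good summand provides the decomposition $D^p(\A,m)=\theta_0\wedge D_0^{p-1}(\A,m)\oplus D_0^p(\A,m)$, and hence $\psi(\A,m;x,t)=(1+x^{m_0}u)\,\psi_0(\A,m;x,t)$ where $\psi_0=\sum_p H(D_0^p,x)u^p$ and $u=t(x-1)-1$. Since $D^3(\A,m)\cong S(-|m|)$ and $D_0^3(\A,m)=0$ (the latter because $D^3$ has rank one and $\theta_0\wedge(-)$ is injective on $D_0^2$), one obtains $H(D_0^2(\A,m),x)=x^{|m|-m_0}/(1-x)^3$. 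A Taylor expansion of $(x-1)\psi_0$ at $x=1$, tracking the cancellation between the contributions of $H(D_0^0)$, $H(D_0^1)$, and $H(D_0^2)$, yields $\chi_0(\A,m;t)=t^2-(|m|-m_0)t+d_1d_2+c$, where $(d_1,d_2)=\exp(\A^{H_0},m^{H_0})$. Comparison with Lemma~\ref{Le2} gives the desired identity.

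The inequality~\eqref{Eq11} now follows from $c\ge 0$; equality holds if and only if $C=0$, if and only if $\mbox{res}_{H_0}^1$ is surjective, which by Theorem~\ref{Th3} is equivalent to $(\A,m)$ being free (recall that $(\A^{H_0},m^{H_0})$ is automatically free as a $2$--multiarrangement). The main technical obstacle is the Hilbert series calculation in the third step, where careful bookkeeping of the $(x-1)^{-2}$ and $(x-1)^{-1}$ singularities of the individual $H(D_0^p,x)$'s is required in order to isolate the constant coefficient of $\chi_0$.
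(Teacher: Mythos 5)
Your proposal is correct and follows essentially the same route as the paper: reduce to the good-summand case via Propositions~\ref{Prop1} and~\ref{Prop2}, use the acyclic $\p$-complex together with the restriction maps of Lemma~\ref{Le1} to identify $\chi_0(\A,m;t)-\chi(\A^{H_0},m^{H_0};t)$ with the (nonnegative) contribution of $\coker \res_{H_0}^1$, and conclude with Theorem~\ref{Th3}. The only real difference is that you establish the finite length of the cokernel $C$ up front by local surjectivity along $H_0\setminus\{0\}$ (as in Proposition~\ref{Prop3}), whereas the paper deduces this a posteriori from the fact that the difference of the two characteristic polynomials is constant in $t$; both variants are valid and the remaining Hilbert-series bookkeeping matches the paper's computation.
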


\begin{proof}
Note that 
we can prove Proposition \ref{Prop4} by using the method in the proof of Theorem 3.2 in 
\cite{Y} with a slight modification. 
Here we give the proof for the completeness. 
$b_2^{H_0} \Am$ and $ b_2 (\A^{H_0},m^{H_0})$ are independent of the heavy multiplicity on $H_0$. Hence 
to determine freeness, Proposition~\ref{Prop1} enables us to assume that 
$\Am$ has a good summand to $H_0$ by increasing the multiplicity of $H_0$.
By Lemma~\ref{Le1} there are well-defined restriction maps $\mathrm{res}_{H_0}^p: D_0^p \Am \rightarrow D^p(\A^{H_0},m^{H_0})$ for $p=0,1,2$. 
Denote by $M^p \subset D^p(\A^{H_0},m^{H_0}) $ the image of the maps $ \mathrm{res}_{H_0}^p$.
Our first aim is to deduce a relation between the Hilbert series of $M^p$ and the one of $D^p\Am$ via the modules $D_0^p\Am$.
Since $\Am$ has a good summand to $H_0$, the complex~\eqref{Eq10} is again acyclic and 
we have $\ker \p = D_0^p\Am$. 
This yields short exact sequences for $p=0,1,2,3$:
\[\xymat{
0 \ar[r] &D_0^p \Am \ar[r]& D^p \Am   \ar[r]^-{\p}&    D_0^{p-1}\Am \ar[r]& 0,
}\]
where the first map is the inclusion. 
Since the map $\p$ is of degree $-m_0$, we obtain the following for the Hilbert series:
\[ H(D^p\Am,x) = H(D_0^p\Am,x) + x^{m_0} H(D_0^{p-1}\Am,x).\]
Multiplying each equation by $y^p$ and summing yields
\begin{equation}\label{Eq12}
\sum_{p=0}^3 H(D^p\Am,x)y^p = (1+yx^{m_0}) \sum_{p=0}^2 H(D_0^{p}\Am,x)y^p.
\end{equation}
Note that 
$D_0^3\Am =0$ by the injectivity of the map $\p : D^3\Am \rightarrow D^2\Am$.
On the other hand, we also have the following exact sequences for $p=0,1,2$:
\[\xymat{
0 \ar[r] & \alpha_{H_0} D_0^p \Am \ar[r]& D_0^p \Am   \ar[r]^-{ \mbox{res}_{H_0}^p}&    M^p\ar[r]& 0,
}\]
where the first map is the inclusion. %In this case, 
Since these maps are all of degree $0$, 
\[ H(D_0^p\Am,x) = H(\alpha_{H_0} D_0^p\Am,x) +H(M^p,x). \]
Thus we obtain for $p=0,1, 2$
\begin{equation}\label{Eq13}
H(M^p,x)= (1-x) H( D_0^p\Am,x).
\end{equation}
Combining Equations~\eqref{Eq12} and~\eqref{Eq13} gives us the desired relation
\begin{equation}\label{Eq14}
\sum_{p=0}^3 H(D^p\Am,x)y^p = \frac{1+yx^{m_0}}{1-x} \sum_{p=0}^2 H(M^p,x) y^p.
\end{equation}
Hence we get by the definition of the reduced characteristic polynomial
\begin{align}
\nonumber \chi_0 (\A,m;t)  =& \frac{1}{t-m_0} \chi (\A,m;t) \\
\nonumber =& \frac{(-1)^3}{t-m_0} \lim_{x\rightarrow 1} \sum_{p=0}^3 H(D^p\Am,x)(t(x-1)-1)^p  \\
\nonumber \stackrel{\eqref{Eq14} }{=} &\frac{-1}{t-m_0} \lim_{x\rightarrow 1} \frac{1+x^{m_0}(t(x-1)-1)}{1-x} \sum_{p=0}^2 H(M^p,x)(t(x-1)-1)^p  \\
=&\lim_{x\rightarrow 1} \sum_{p=0}^2 H(M^p,x)(t(x-1)-1)^p \label{Eq15}.
\end{align}
Since 
$(\A^{H_0},m^{H_0})$ is a multiarrangement of rank two, it is free. 
Let $\exp \Am=(d_2,d_3)$. Hence the equation~\eqref{PreEq2} implies 
that 
\[\chi (\A^{H_0},m^{H_0};t) =  \lim_{x\rightarrow 1} \sum_{p=0}^2 H(D^p(\A^{H_0},m^{H_0}) ,x)(t(x-1)-1)^p = (t-d_2)(t-d_3).\]
Since $d_2+d_3=|m|-m_0$, 
Lemma~\ref{Le2} implies that 
$\chi_0 (\A,m;0) - d_2d_3 = \chi_0 (\A,m;t) - (t-d_2)(t-d_3)$. % due to the fact $d_2+d_3=|m|-m_0$. 
Since the maps $\mbox{res}_{H_0}^2$ and $\mbox{res}_{H_0}^0$ are surjective by 
definition, this combined with~\eqref{Eq15} implies
\begin{align*}
\chi_0 (\A,m;0) - d_2d_3 =& \chi_0 (\A,m;t) - (t-d_2)(t-d_3) \\
=& \lim_{x\rightarrow 1} (t(x-1)-1) \left[ H(M^1,x)- H(D^1(\A^{H_0},m^{H_0}) ,x) \right]\\
=& t \lim_{x\rightarrow 1}(x-1) \left[H(M^1,x)- H(D^1(\A^{H_0},m^{H_0}) ,x)\right] \\
& -\lim_{x\rightarrow 1} \left[H(M^1,x)- H(D^1(\A^{H_0},m^{H_0}) ,x)\right]
\end{align*}
Since the left hand side is a constant, we obtain that
\[ \chi_0 (\A,m;0) - d_2d_3 =\lim_{x\rightarrow 1} \left[ H(D^1(\A^{H_0},m^{H_0}) ,x)-H(M^1,x)\right]
\ge 0 \]
is finite and equal to $\dim \coker \mbox{res}_{H_0}^1 $. 
Since $ b_2 (\A^{H_0},m^{H_0}) = d_2d_3 $ and $b_2^{H_0} \Am = \chi_0 (\A,m;0)$ by Lemma~\ref{Le2}, the 
inequality~\eqref{Eq11} is shown. This argument also shows that inequality~\eqref{Eq11} is in fact an equality if and only if the map $\mbox{res}_{H_0}^1$ is surjective. 
Hence Theorem~\ref{Th3} completes the proof.
\end{proof}

By Propositions \ref{Prop3} and \ref{Prop4}, we can prove 
Theorem \ref{AYheavy} in the following by slightly modifying an argument given in \cite{AY2}:

\begin{proof}[Proof of Theorem~\ref{AYheavy}]
(1)\,\,
Recall Proposition~\ref{PrePro2}:
\[ b_2 \Am = \sum_{X\in L_2(\A)} b_2 (\A_X,m_X).\]
Since 
$b_2^{H_0} \Am=B_2^{H_0} \Am$ for the locally heavy multiarrangement $\Am$, 
we have 
\[ b_2^{H_0} \Am =b_2 \Am -m_0(|m|-m_0)= \sum_{X\in L_2(\A), X\not\subset H_0} b_2 (\A_X,m_X).\]
For $Y\in L_2(\A^{H_0})$, 
\[b_2^{H_0}  (\A_Y,m_Y) = \sum_{X\in L_2(\A), X\not\subset H_0, Y=X\cap H_0} b_2 (\A_X,m_X).
\]
Hence 
\[
b_2^{H_0}  (\A,m)=\sum_{X\in L_2(\A), X\not\subset H_0} b_2 (\A_X,m_X)=\sum_{Y\in L_2(\A^{H_0})} b_2^{H_0} (\A_Y,m_Y).
\]

Since $(\A_Y,m_Y)$ is an unbalanced multiarrangement of rank 3 with 
the heavy hyperplane $H_0$ 
for $Y\in L_2(\A^{H_0})$, Proposition~\ref{Prop4} shows that 
\begin{align}
b_2^{H_0} \Am &=  \sum_{Y\in L_2(\A^{H_0})} b_2^{H_0} (\A_Y,m_Y)\nonumber \\
 &\geq  \sum_{Y\in L_2(\A^{H_0})} b_2 (\A_Y^{H_0},m_Y^{H_0})=b_2 (\A^{H_0},m^{H_0})\label{Eq17}.
\end{align} 
Now assume that $(\A^{H_0},m^{H_0})$ is free. Then we claim that the following three statments are equivalent:
\begin{itemize}
\item[(i)] $\Am$ is free.
\item[(ii)] $b_2^{H_0}\Am = b_2 (\A^{H_0},m^{H_0})$.
\item[(iii)] $\Am$ is locally free in codimension 3 along $H_0$, i.e., 
$(\A_X,m_X)$ is free for all $X \in L_3(\A)$ with $X \subset H_0$.
\end{itemize}
Note that $(i)\Rightarrow(iii) $ follows immediately from Proposition~\ref{PrePro3}. 
Also, Proposition~\ref{Prop4} applied to each summand of~\eqref{Eq17} shows that
$(ii)$ and $(iii)$ are equivalent.

We will prove $(iii) \Rightarrow (i)$ by induction on $\ell$. 
If $\ell =3$, then $L_3(\A)=\lbrace 0 \rbrace$ and $(\A_X,m_X)=\Am$ for $X\in L_3(\A)$. Hence  $\Am$ is free. Next, let $\ell \geq 4$. We will show that $\Am$ is locally free along $H_0$, which implies that $\Am$ is free by Proposition~\ref{Prop3}. Let $Z \in L(\A)$ with $0 \neq Z \subset H_0$. Then $(\A_Z,m_Z)$ has rank at most $\ell - 1$.
%since $Z\neq 0$. 
Since $Z\subset H_0$, $(\A_Z,m_Z)$ is still unbalanced and since $(\A_Z^{H_0},m_Z^{H_0})$ is a localization of $(\A^{H_0},m^{H_0})$, it is free by Proposition~\ref{PrePro3}. 
Finally, 
we show that $(\A_Z,m_Z)$ satisfies (iii) and hence by induction assumption, $(\A_Z,m_Z)$ is free. %which shows that $\Am$ is locally free along $H_0$. 
So let $W\in L_3(\A_Z)$ with $W \subset H_0$. Then 
$W\in L_3 (\A)$, and 
by definition of the localization, 
\begin{equation}\label{Eq16}
 ((\A_Z)_W,(m_Z)_W)=(\A_W,m_W).
\end{equation}
By assumption, $(\A_W,m_W)$ is free. %since $W\in L_3 (\A)$ and $W\subset H_0$. 
Hence $((\A_Z)_W,(m_Z)_W)$ is free too by %Proposition~\ref{PrePro3} and
~\eqref{Eq16}. 
Therefore, $(\A_Z,m_Z)$ satisfies (iii) and the proof of (1) is completed.

For (2) choose $k\in\Z_{\ge 0}$ such that $H_0$ is a heavy hyperplane in $(\A,m+k\delta_{H_0})$. Then it clearly holds that $(\A^{H_0},m^{H_0})=(\A^{H_0},(m+k\delta_{H_0})^{H_0})$ and $b_2^{H_0}\Am=b_2^{H_0}(\A,m+k\delta_{H_0})$. Hence by (1), $(\A,m+k\delta_{H_0})$ is free. Finally, Remark~\ref{Rem2} also implies that $\Am$ is free, since $H_0$ is locally heavy in $\Am$.
\end{proof}

\section{Heavy flags}

In this section we will prove Theorem~\ref{ineq2} and Theorem~\ref{heavyflag}. First, we quote the following lemma.

\begin{lemma}\label{LemIneq3}\cite[Lemma 4.3]{AN}
Let $\Am$ be a multiarrangement with $\exp\Am=(d_1,d_2)$. Then for $H\in\A$ it holds that $\exp (\A,m-\delta_H)=(d_1 -1,d_2)$ or $\exp (\A,m-\delta_H)=(d_1,d_2 -1)$. In particular, 
for two multiplicities $m,m' : \A \rightarrow \Z_{>0}$ with 
$m(H) \ge m'(H)$ for all $H \in \A$, it holds that 
$b_2(\A,m) \ge b_2(\A,m')$.
\end{lemma}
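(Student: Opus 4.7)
The plan is to exploit the automatic freeness of rank-two multiarrangements (Proposition~\ref{PrePro1}) together with Saito's criterion, comparing bases of $D(\A,m)$ and $D(\A,m-\delta_H)$ via a change-of-basis matrix whose determinant is controlled by $\alpha_H$. Since $\rank\A\le 2$, both $(\A,m)$ and $(\A,m-\delta_H)$ are free; I write $\exp(\A,m-\delta_H)=(e_1,e_2)$, so that $e_1+e_2=|m|-1$.

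The key step is a short linear algebra calculation. I fix homogeneous bases $\theta_1,\theta_2$ of $D(\A,m)$ of degrees $d_1,d_2$ and $\psi_1,\psi_2$ of $D(\A,m-\delta_H)$ of degrees $e_1,e_2$. Since $D(\A,m)\subseteq D(\A,m-\delta_H)$, each $\theta_i$ can be written as $a_{i1}\psi_1+a_{i2}\psi_2$ with $a_{ij}\in S$ homogeneous of degree $d_i-e_j$ (taken to be zero when $d_i<e_j$). Applying Saito's criterion (Theorem~\ref{PreTh2}) to both bases forces the change-of-basis matrix $A=(a_{ij})$ to satisfy
\[
\det A \;=\; c\,\frac{\Q(\A,m)}{\Q(\A,m-\delta_H)} \;=\; c\,\alpha_H
\]
for some $c\in\K^*$, so $\det A$ is a nonzero linear form.

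Next, a small case analysis of $\det A=a_{11}a_{22}-a_{12}a_{21}$ finishes the first claim. Each of the two products $a_{11}a_{22}$ and $a_{12}a_{21}$ is homogeneous of degree~$1$ or zero, and the nonzero one must split as a nonzero scalar times a linear form in its two factors. The four placements of the linear factor each pin down one of $d_i-e_j$ to be $1$ and its partner to be $0$, and in every situation this yields $\{e_1,e_2\}=\{d_1-1,d_2\}$ or $\{d_1,d_2-1\}$ (the anti-diagonal case agreeing with the diagonal case after a reordering of exponents).

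For the ``in particular'' statement I would induct on $|m|-|m'|$. Each step decreases the multiplicity by $\delta_H$ at some hyperplane $H$, and by the first part $b_2=d_1d_2$ drops to either $(d_1-1)d_2$ or $d_1(d_2-1)$; since $d_1,d_2\ge 0$, the single-step decrement $d_2$ or $d_1$ is nonnegative. Chaining these inequalities along any chain $m=m_0\ge m_1\ge\cdots\ge m_k=m'$ yields $b_2(\A,m)\ge b_2(\A,m')$. The principal obstacle is the determinant identity in the second paragraph; once Saito's criterion has been correctly applied to both bases (legitimate thanks to the rank-two freeness from Proposition~\ref{PrePro1}), the remaining combinatorics of homogeneous decompositions of $c\,\alpha_H$ is essentially immediate.
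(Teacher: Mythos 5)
Your argument is correct. Note first that the paper itself gives no proof of this lemma; it is quoted verbatim from \cite[Lemma 4.3]{AN}, so there is nothing internal to compare against. Your route is a clean and self-contained one: both $(\A,m)$ and $(\A,m-\delta_H)$ are free by Proposition~\ref{PrePro1}, the inclusion $D(\A,m)\subseteq D(\A,m-\delta_H)$ gives a change-of-basis matrix $A$ with homogeneous entries $a_{ij}$ of degree $d_i-e_j$ (this homogeneity is automatic from uniqueness of coordinates in a graded free module, but is worth a sentence), and Saito's criterion applied to both bases forces $\det A=c\,\alpha_H$ with $c\in\K^*$, since $\det M(\theta_1,\theta_2)=\det M(\psi_1,\psi_2)\det A$. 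The degree count $(d_1+d_2)-(e_1+e_2)=|m|-(|m|-1)=1$ makes both products $a_{11}a_{22}$ and $a_{12}a_{21}$ homogeneous of degree one, and since at least one is nonzero, one factor of that product is a nonzero constant and the other is linear, pinning down $\{e_1,e_2\}$ as $\{d_1-1,d_2\}$ or $\{d_1,d_2-1\}$ as multisets in every case (including when both products are nonzero, since either one alone suffices). The ``in particular'' part then follows by chaining single decrements, using $b_2=d_1d_2$ from Terao's factorization~\eqref{PreEq2} and the nonnegativity of the exponents; all intermediate multiplicities stay in $\Z_{>0}$ since $m'\ge 1$. For comparison, a slightly shorter standard argument avoids the determinant entirely: the two inclusions $D(\A,m)\subseteq D(\A,m-\delta_H)$ and $\alpha_H\,D(\A,m-\delta_H)\subseteq D(\A,m)$ sandwich the ordered exponents, giving $e_i\le d_i\le e_i+1$ for $i=1,2$, which together with $e_1+e_2=d_1+d_2-1$ yields the same conclusion; your version buys a bit more explicitness at the cost of the case analysis.
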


Second, we show the following lemma.

\begin{lemma}\label{LemIneq2}
Let $\Am$ be a (possibly unbalanced) 
multiarrangement in $V=\K^2$ with $\exp\Am=(d_1,d_2)$ and $d_1 \le d_2$. Then for any $H\in\A$  which is not heavy, it holds that 
\begin{equation}\label{Ineq3}
m(H) \le d_1 \le d_2 \le |m|-m(H).
\end{equation}
In particular, 
\begin{equation}\label{Ineq1}
b_2\Am = d_1 d_2 \ge m(H)(|m| - m(H))
\end{equation}
for any multiarrangement $\Am$ of rank two and $H\in H$.
\end{lemma}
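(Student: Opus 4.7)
The plan is to reduce both claims of the lemma to a single statement about $d_1$ and then prove it by a ``forbidden-derivation'' argument, handling the heavy case of the product inequality separately via Theorem~\ref{Th1}.

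Since $\Am$ has rank two, Proposition~\ref{PrePro1} ensures its freeness, and Saito's criterion (Theorem~\ref{PreTh2}) then forces $d_1+d_2=|m|$. Consequently the chain~\eqref{Ineq3} is equivalent to the single inequality $d_1\ge m(H)$ under the non-heaviness hypothesis $2m(H)<|m|$. To prove it I would argue by contradiction: suppose $d_1<m(H)=:M$ and pick a homogeneous basis $\theta_1,\theta_2$ of $D\Am$ with $\pdeg\theta_i=d_i$. Choose coordinates $x,y$ on $V=\K^2$ so that $\alpha_H=x$. Since $\theta_1(\alpha_H)\in\alpha_H^M S$ and $\deg\theta_1(\alpha_H)\le d_1<M$, we must have $\theta_1(\alpha_H)=0$, forcing $\theta_1=f\,\partial_y$ for some homogeneous $f\in S$ of degree $d_1$. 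For any $H'\in\A\setminus\{H\}$ the form $\alpha_{H'}$ has a nonzero $y$-component (otherwise $H'=H$), so $\partial_y(\alpha_{H'})\in\K^*$; then $\theta_1(\alpha_{H'})\in\alpha_{H'}^{m(H')}S$ is equivalent to $\alpha_{H'}^{m(H')}\mid f$. Collecting these divisibilities gives $\prod_{H'\ne H}\alpha_{H'}^{m(H')}\mid f$, hence $d_1=\deg f\ge|m|-M$, and combined with $d_1<M$ this yields $|m|<2M$, contradicting non-heaviness.

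For the ``in particular'' statement I would use the quadratic $g(x):=x(|m|-x)$, which is symmetric about $|m|/2$ and increasing on $[0,|m|/2]$. If $H$ is not heavy, the previous step gives $m(H)\le d_1\le|m|/2$, so monotonicity yields $d_1 d_2=g(d_1)\ge g(m(H))=m(H)(|m|-m(H))$. If $H$ is heavy, then Proposition~\ref{PrePro1} again supplies freeness and Theorem~\ref{Th1} forces $m(H)\in\exp\Am$; since $m(H)\ge|m|/2\ge d_1$, we necessarily have $m(H)=d_2$ (even in the edge case $d_1=d_2=|m|/2$), so $d_1 d_2=(|m|-m(H))\,m(H)$ with equality.

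The main delicate step is the identification of $\theta_1$ with an element of the shape $f\,\partial_y$ whose coefficient is forced to be highly divisible; everything after that reduces to a degree count against Saito's criterion and to the monotonicity of $g$.
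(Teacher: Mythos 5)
Your argument is correct, but it takes a genuinely different route from the paper's. The paper raises the multiplicity on $H$ by $k=|m|-2m(H)$ to reach the balanced threshold, where the exponents are forced to be $(|m|-m(H),|m|-m(H))$, and then walks back down to $m$ using Lemma~\ref{LemIneq3} (each unit decrease of $m(H)$ lowers exactly one exponent by one), which yields~\eqref{Ineq3} after $k$ steps. You instead prove the equivalent inequality $d_1\ge m(H)$ directly: if $d_1<m(H)$, the lower-degree basis element must annihilate $\alpha_H$, hence has the form $f\,\partial_y$, and the logarithmic conditions at the remaining hyperplanes force $\prod_{H'\ne H}\alpha_{H'}^{m(H')}\mid f$, giving $d_1\ge|m|-m(H)$ and a contradiction with non-heaviness. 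This is a self-contained Saito-criterion/divisibility argument that needs only Proposition~\ref{PrePro1} and Theorem~\ref{PreTh2} and avoids the Abe--Numata lemma entirely; the paper's deformation argument is shorter once Lemma~\ref{LemIneq3} is granted and matches the deformation philosophy used elsewhere in that section. Your handling of~\eqref{Ineq1} --- monotonicity of $x\mapsto x(|m|-x)$ on $[0,|m|/2]$ in the non-heavy case, and Theorem~\ref{Th1} giving $m(H)\in\exp\Am$, hence equality, in the heavy case --- is also correct and in fact more explicit than the paper, which leaves the ``in particular'' step to the reader.
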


\begin{proof}
Fix a hyperplane $H\in\A$ which is not heavy in $\Am$, i.e., $m(H)\le |m|-m(H)$. Set $k:=|m|-2m(H)$.
Then $(\A,m+k \delta_H)$ is unbalanced with $\exp (\A,m+k \delta_H) = (m(H)+k,m(H)+k)$.
Note that~\eqref{Ineq3} holds for $(\A,m+k \delta_H)$. Now 
taking $k \to 0$ with Lemma \ref{LemIneq3} completes the 
proof. 
%\we inductively decrease $k$ by one until $k=0$, and observe that~\eqref{Ineq3} holds in any step %due to Lemma~\ref{LemIneq3}.
\end{proof}

Now we prove Theorems \ref{ineq2}, \ref{heavyflag} and Corollary \ref{combin}. 
%can proceed to the proof of the last two theorems of the introduction.

\begin{proof}[Proof of Theorem~\ref{ineq2}]
If $H$ is heavy in $\Am$ we have $ B_2^H\Am = b_2^H \Am$ and $ b_2(\A^H,m^H ) = b_2(\A^H,m^*)$. Hence the inequality was already shown in Theorem~\ref{AYheavy}. Now assume that $H$ is not heavy in $\Am$. We proceed in the following three steps.
\begin{itemize}
\item[$(i)$] $ b_2^H\Am \ge B_2^H\Am$: By the 
inequality~\eqref{Ineq1} of Lemma~\ref{LemIneq2}, we obtain
\begin{align*}
&b_2^H\Am - B_2^H\Am  \\
=& \sum_{X\in L_2(\A)} b_2(\A_X,m_X) -m(H)(|m|-m(H))  -\sum_{X\in L_2(\A), X \not \subset H} b_2(\A_X,m_X)\\
=& \left(\sum_{X\in L_2(\A), X \subset H} b_2(\A_X,m_X)\right)-m(H)(|m|-m(H))\\
\stackrel{\eqref{Ineq1} }{\ge} & \left(\sum_{X\in L_2(\A), X \subset H} m(H)(|m_X|-m(H))\right)- m(H)(|m|-m(H))\\
=& m(H) \left( \sum_{X\in L_2(\A), X \subset H} (|m_X|-m(H)) - (|m|-m(H)) \right)  = 0.
\end{align*}
\item[$(ii)$] $ B_2^H\Am \ge b_2(\A^H,m^H)$: The quantities 
$ B_2^H\Am$ and $ b_2(\A^H,m^H)$ are clearly independent of $m(H)$. Hence 
we may assume 
that $H$ is the heavy hyperplane in $\Am$. 
Then $ B_2^H\Am \ge b_2^H \Am$ has already been proven in Theorem \ref{AYheavy}.
\item[$(iii)$] $ b_2(\A^H,m^H ) \ge b_2(\A^H,m^*)$: By the local-global formula in Proposition~\ref{PrePro2} we can reduce to the case $\A^H$ is an arrangement of rank two. 
The inequality~\eqref{Ineq3} and the definition of the Euler multiplicity 
show that $m^H(X)\ge m^*(X)$ for any $X \in \A^H$. Then
Lemma~\ref{LemIneq3} 
completes the proof.
\end{itemize}
\end{proof}

\begin{proof}[Proof of Theorem~\ref{heavyflag}]
Agree that $\A=\left(\A^{X_0},m^{X_0}\right)$, where $m^{X_0} (H) =1$ for all $H\in\A$. Theorem~\ref{PreTh5} applied to $\A$ yields
\begin{equation}\label{Eq18}
\A \mbox{ is free} \Leftrightarrow \left( \A^{X_1},m^{X_1} \right) \mbox{ is free and } b_2(\A) -\left( \left| m^{X_0} \right|-1 \right) = b_2 \left(\A^{X_1},m^{X_1}\right).
\end{equation}
Note that $\left(\left(\A^{X_i}\right)^{X_{i+1}},\left(m^{X_i}\right)^{X_{i+1}}\right) = \left(\A^{X_{i+1}},m^{X_{i+1}}\right)$ and the multiarrangements $\left(\A^{X_{i+1}},m^{X_{i+1}}\right)$ are unbalanced with the heavy hyperplane $X_{i+2} \in \A^{X_{i+1}}$ for $i=1,\ldots,\ell-3$. 
Hence we can apply Theorem~\ref{AYheavy} to obtain, 
for $i=1,\ldots,\ell-3$,
\begin{align}
\label{Eq19} &\left(\A^{X_{i}},m^{X_{i}}\right) \mbox{ is free} \Leftrightarrow \\
\nonumber &\left(\A^{X_{i+1}},m^{X_{i+1}} \right)\mbox{ is free and } b_2^{X_{i+1}} \left( \A^{X_{i}},m^{X_{i}} \right) = b_2 \left(\A^{X_{i+1}},m^{X_{i+1}}\right).
\end{align}
Since $\left(\A^{X_{\ell -2}},m^{X_{\ell -2}}\right)$ is a multiarrangement of rank 2, it is free by Proposition~\ref{PrePro1}. So we can iteratively link the statements~\eqref{Eq18} and~\eqref{Eq19} to obtain
\begin{equation}\label{Eq20}
\A \mbox{ is free} \Leftrightarrow b_2^{X_{i+1}} \left( \A^{X_{i}},m^{X_{i}} \right) = b_2 \left(\A^{X_{i+1}},m^{X_{i+1}}\right) \mbox{for all } i=0,\ldots ,\ell -3.
\end{equation}
Since Theorem~\ref{PreTh5} and Theorem~\ref{AYheavy} imply that $b_2^{X_{i+1}} \left( \A^{X_{i}},m^{X_{i}} \right) \ge b_2 \left(\A^{X_{i+1}},m^{X_{i+1}}\right)$ for all $i=0,\ldots ,\ell -3$, the right-hand side of~\eqref{Eq20} is equivalent to
\begin{equation}\label{eq100}
\sum_{i=0}^{\ell -3} b_2^{X_{i+1}} \left( \A^{X_{i}},m^{X_{i}} \right) = \sum_{i=0}^{\ell -3} b_2 \left(\A^{X_{i+1}},m^{X_{i+1}}\right).
\end{equation}
Noting that 
\begin{eqnarray*}
b_2^{X_{i+1}}(\A^{X_{i}},m^{X_{i}})&=&
b_2(\A^{X_i},m^{X_i})-m^{X_i}(X_{i+1})(|m^{X_i}|-m^{X_i}(X_{i+1}))
\end{eqnarray*}
we find that~\eqref{eq100} is in fact equivalent to
\begin{equation}\label{Eq21}
 b_2 (\A) - \sum_{i=0}^{\ell -3} m^{X_{i}}(X_{i+1}) \left( \left|m^{X_{i}}\right|-m^{X_{i}}(X_{i+1} )\right) =b_2 \left(\A^{X_{\ell -2}},m^{X_{\ell-2}}\right) .
\end{equation}
Since $\exp \left(\A^{X_{\ell -2}},m^{X_{\ell -2}}\right)=\left( m^{X_{\ell -2}}(X_{\ell-1}), \left|m^{X_{\ell-2}}\right|-m^{X_{\ell-2}}(X_{\ell-1} ) \right)$ we have 
\[b_2 \left(\A^{X_{\ell -2}},m^{X_{\ell -2}}\right)= m^{X_{\ell -2}}(X_{\ell-1}) \left( \left|m^{X_{\ell-2}}\right|-m^{X_{\ell-2}}(X_{\ell-1} )\right).\]
%So using
This shows that~\eqref{Eq21} is equivalent to
\[
b_2 (\A) = \sum_{i=0}^{\ell -2} m^{X_{i}}(X_{i+1}) \left( \left|m^{X_{i}}\right|-m^{X_{i}}(X_{i+1} )\right)  .
\]
Since $\left|m^{X_{i}}\right|=\sum_{j=i}^{\ell-1} m^{X_{j}} (X_{j+1})$, the above 
equality is equivalent to
\begin{align*}
b_2 (\A) &= \sum_{i=0}^{\ell -2} m^{X_{i}}(X_{i+1}) \left( \sum_{j=i+1}^{\ell-1} m^{X_{j}} (X_{j+1})  )\right) \\
& = \sum_{0 \le i< j \le \ell-1}m^{X_i}(X_{i+1})m^{X_j}(X_{j+1})
\end{align*}
This completes the proof, since by~\eqref{Eq20} this is equivalent to $\Am$ being free.
\end{proof}

\begin{proof}[Proof of Corollary~\ref{combin}]
Corollary~\ref{combin} follows directly from Theorem~\ref{heavyflag}, since the characteristic polynomial $\chi (\A;t)$, and in particular also $b_2(\A)$, of a simple arrangment $\A$ is combinatorially determined. The same holds true for the multiplicities of the Euler-Ziegler restrictions. 
For the supersolvablity, use Proposition 4.2 in \cite{A3}.
\end{proof}

Corollary~\ref{combin} says that 
Terao's conjecture holds true in the category of arrangements with a heavy flag. 

Furthermore, we can show the following theorem, which links the freeness of an unbalanced multiarrangement in $\K^3$ with its combinatorics.

\begin{cor}\label{Cor3Multi}
Let $\Am$ be a multiarrangement in $V=\K^3$ with a heavy hyperplane $H_0\in \A$. Assume that 
$\left(\A^{H_0},m^{H_0}\right)$ and $(\A_X,m_X)$ are either unbalanced, or consist of up to three hyperplanes for all $X\in L_2(\A)$. Then the freeness of $\A$ depends only on the combinatorics of $\Am$, i.e., on $L(\A)$ and the multiplicity function $m$.
\end{cor}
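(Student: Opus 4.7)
The plan is to combine the freeness criterion from Proposition~\ref{Prop4} with the local-global formula from Proposition~\ref{PrePro2} and then reduce the whole question to the combinatorial nature of exponents of rank $2$ multiarrangements. Since $H_0$ is a heavy hyperplane in the rank $3$ multiarrangement $(\A,m)$, Proposition~\ref{Prop4} asserts that $(\A,m)$ is free if and only if
\[
b_2^{H_0}(\A,m) \;=\; b_2(\A^{H_0},m^{H_0}).
\]
The integers $|m|$ and $m_0=m(H_0)$ are visibly combinatorial, so it suffices to show that $b_2(\A,m)$ and $b_2(\A^{H_0},m^{H_0})$ are both determined solely by the pair $(L(\A),m)$.

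For the first quantity, the local-global formula gives
$b_2(\A,m)=\sum_{X\in L_2(\A)} b_2(\A_X,m_X)=\sum_{X\in L_2(\A)} e_1(X)e_2(X)$, so it is enough that the exponents $(e_1(X),e_2(X))$ of each rank $2$ piece $(\A_X,m_X)$ depend only on $L(\A)$ and $m$. For the second quantity, note that $\A^{H_0}$ is a rank $2$ arrangement and its hyperplanes are in bijection with $\{X\in L_2(\A):X\subset H_0\}$, while $m^{H_0}(X)=|m_X|-m(H_0)$; both are combinatorial data computed from $(L(\A),m)$. Hence $b_2(\A^{H_0},m^{H_0})$ is the product of the two exponents of the rank $2$ multiarrangement $(\A^{H_0},m^{H_0})$, and we again need only that these exponents are combinatorially determined.

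The hypothesis of the corollary is precisely designed so that every rank $2$ multiarrangement that appears above is either unbalanced or has at most three hyperplanes. In the unbalanced case with heavy hyperplane $H$, the exponents are $(m(H),|m_X|-m(H))$ as recalled in the introduction. If $|\A_X|=2$ the exponents are simply $(m(H_1),m(H_2))$, and if $|\A_X|=3$ they are given by the explicit Wakamiko formula in terms of the three multiplicities. In every case the exponents, and hence $b_2(\A_X,m_X)$ and $b_2(\A^{H_0},m^{H_0})$, are computed directly from $(L(\A),m)$. Plugging back into the equality of Proposition~\ref{Prop4} shows that freeness of $(\A,m)$ is equivalent to a combinatorial identity.

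The argument is essentially bookkeeping once the equivalence provided by Proposition~\ref{Prop4} is in place; the only substantive point is that every rank $2$ multiarrangement appearing in the local-global decomposition has combinatorially computable exponents, and this is exactly what the ``unbalanced or at most three hyperplanes'' hypothesis buys us. This is the step one has to invoke carefully, since for an arbitrary rank $2$ multiarrangement with four or more lines an explicit closed-form dependence of the exponents on the multiplicities is not as clean.
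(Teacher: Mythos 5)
Your proposal is correct and follows essentially the same route as the paper: reduce via the heavy-hyperplane freeness criterion (your Proposition~\ref{Prop4} is the rank-$3$ specialization of Theorem~\ref{AYheavy} used in the paper) to a $b_2$-equality, then use the local-global formula together with the combinatorial determination of exponents of rank-$2$ multiarrangements that are unbalanced, have two hyperplanes, or have three hyperplanes (Wakamiko). The only cosmetic difference is that the paper cites Theorem~\ref{AYheavy} directly rather than Proposition~\ref{Prop4}, which changes nothing in substance.
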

\begin{proof}
Let $\Bm$ be a multiarrangement of rank two. If $\Bm$ has the 
heavy hyperplane $H\in \B$, 
then $\exp \Bm = (m(H),|m|-m(H))$. 
If $\B = \{H_1,H_2\}$, then $\exp \Bm = (m(H_1),m(H_2))$.
If $|\B| = 3$ and $\Bm$ is balanced, Wakamiko showed in~\cite{W}, that $\exp \Bm = (\lfloor \frac{|m|}{2}\rfloor,\lceil\frac{|m|}{2}\rceil)$.
Hence in all cases, the exponents are determined combinatorially.
So by the assumption, 
$b_2 \left(\A^{H_0},m^{H_0}\right)$ and $b_2 (\A_X,m_X)$ for all $X\in L_2(\A)$ are determined combinatorially as well. By the local-global formula in Proposition~\ref{PrePro1}, 
so is $b_2^{H_0}\Am$. Since $\left(\A^{H_0},m^{H_0}\right)$ in $\K^2$ is free, we can determine 
the freeness of $\A$ combinatorially with Theorem~\ref{AYheavy}.
\end{proof}

For the case of the Weyl arrangement of the type $A_3$, all non-trivial localizations and restrictions have at most three hyperplanes. So Corollary~\ref{Cor3Multi} immediately implies Corollary~\ref{A3} from the introduction. 

Next, we consider the following example of a Weyl multiarrangement of type $A_3$ to show that the heaviness assumptions in Theorem~\ref{AYheavy} can not be dropped in general.

\begin{example}
Let $(\A,m)$ be defined by 
$$
x^3y^3z^3(x-y)^3
(y-z)^3(z-x)^3=0.
$$
Then this is free (see \cite{T} for example), but 
$b_2^H(\A,m)=107-3(18-3)=62
>B_2^H(\A,m)=58>
b_2(\A^H,m^H)=56$ for any $H\in\A$. 

\end{example}

\section{Multi-freeness and supersolvability}

In this section we study the relation between the freeness of multiarrangements $(\A,m)$ and 
the geometry of the underlying 
simple arrangement $\A$. For example, if any multiplicity on $\A$ is free, then 
$\A$ is known to be the product of one and two-dimensional simple arrangements (\cite{ATY}). Hence 
freeness affects the geometry of the simple arrangement on which multiplicities are. Here, by using 
the results of the previous sections, we show a similar result in a wider category of free arrangements. 

\begin{theorem}\label{heavycor}
Let $(\A,m)$ be an 
essential 
free multiarrangement in $V=\K^\ell$ with $X_i \in L_i(\A)$ for $i=0,\ldots,\ell$ such that $X_i$ is the heavy hyperplane in $(\A^{X_{i-1}},m^{X_{i-1}})$ for $i=1,\ldots,\ell-1$. 
Then $\A$ is supersolvable (hence free) with a supersolvable filtration $\A_{X_1} \subset \cdots \subset \A_{X_{\ell-1}}\subset \A_{X_{\ell}}=\A$.
\end{theorem}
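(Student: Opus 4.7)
The plan is to proceed by induction on $\ell$, with the base case $\ell \le 2$ being immediate since every rank $\le 2$ arrangement is trivially supersolvable.

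For the inductive step, I would first apply Theorem~\ref{AYheavy}(1) at the heavy hyperplane $X_1$ to conclude that $(\A^{X_1},m^{X_1})$ is a free multiarrangement in $X_1$ of dimension $\ell-1$. The flag $X_1\supset X_2\supset\cdots\supset X_\ell$, together with the multiplicity $m^{X_1}$, satisfies the hypothesis of the theorem in dimension $\ell-1$, so the induction hypothesis produces the supersolvable filtration
\[
(\A^{X_1})_{X_2}\subset (\A^{X_1})_{X_3}\subset\cdots\subset(\A^{X_1})_{X_\ell}=\A^{X_1}
\]
of $\A^{X_1}$ in $X_1$.

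The remaining task is to check that $\A_{X_1}\subset\A_{X_2}\subset\cdots\subset\A_{X_\ell}=\A$ is a supersolvable filtration of $\A$ itself. The rank condition $\rank(\A_{X_i})=i$ is automatic from $X_i\in L_i(\A)$. For Definition~\ref{PreDef1}(2) at rank $j\ge 2$, fix distinct $H,H'\in\A_{X_j}\setminus\A_{X_{j-1}}$. Since $H,H'\supset X_j$ but $H,H'\not\supset X_{j-1}$, a dimension count forces $H\cap X_{j-1}=X_j=H'\cap X_{j-1}$, and hence the restrictions $Y:=X_1\cap H$ and $Y':=X_1\cap H'$ lie in $(\A^{X_1})_{X_j}\setminus(\A^{X_1})_{X_{j-1}}$. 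If $Y=Y'$, then $H\cap H'=Y\subset X_1\in\A_{X_{j-1}}$, so $H''=X_1$ works. If $Y\ne Y'$, the inductive supersolvability of $\A^{X_1}$ produces $Y''\in(\A^{X_1})_{X_{j-1}}$ containing $W:=Y\cap Y'$, and the dimension formula (using $Y''\cap(H\cap H')=W$) shows that $K:=Y''+(H\cap H')$ is a hyperplane of $V$ containing both $X_{j-1}$ and $H\cap H'$.

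The main obstacle, where the multiplicity hypothesis is used decisively, is to verify $K\in\A$. My plan is to localize $(\A,m)$ at the codimension-three flat $W$: by Proposition~\ref{PrePro3}, $(\A_W,m_W)$ is a free rank-three multiarrangement in which $X_1$ remains the heavy hyperplane (since $2m(X_1)\ge|m|\ge|m_W|$). Applying Proposition~\ref{Prop4} to $(\A_W,m_W)$ yields the rank-three $b_2$-equality, whose proof shows that the residue map $\mathrm{res}^1_{X_1}:D_0(\A_W,m_W)\to D((\A^{X_1})_W,(m^{X_1})_W)$ is surjective. A Saito-criterion style lifting argument, parallel to the end of the proof of Theorem~\ref{Th3}, then forces the rank-two flat $H\cap H'\in L_2(\A_W)$ to be covered by some hyperplane of $\A_W$ whose restriction to $X_1$ is $Y''$; by uniqueness of this lift in the pencil through $Y''$, this hyperplane must coincide with $K$. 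Since $K\supset X_{j-1}$, we obtain $K\in\A_{X_{j-1}}$ and $H''=K$ completes the verification.
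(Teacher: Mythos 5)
Your reduction is fine up to the last step: since any admissible $H''$ must contain both $X_{j-1}$ and $H\cap H'$, it can only be the hyperplane $K=X_{j-1}+(H\cap H')$, so everything hinges on showing $K\in\A$. That step is a genuine gap. The mechanism you propose --- localize at $W$, apply Proposition~\ref{Prop4} and a Theorem~\ref{Th3}-style lifting to conclude that some hyperplane of $\A_W$ through $H\cap H'$ restricts to $Y''$ --- uses only the data visible in the rank-three localization: a free unbalanced multiarrangement $(\A_W,m_W)$ with heavy $X_1$, a flat $Z=H\cap H'\not\subset X_1$, and an element $Y''$ of the restriction (note that in the localized picture the condition $Y''\supseteq W$ is automatic, so it carries no information). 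This data does not imply the conclusion. For instance, let $(\A,m)$ in $\K^3$ be defined by $x^4yz(y-z)(x+y)=0$ with $X_1=\{x=0\}$ heavy; one checks $b_2^{X_1}(\A,m)=4=b_2(\A^{X_1},m^{X_1})$, so $(\A,m)$ is free with exponents $(4,2,2)$ and $\mathrm{res}^1_{X_1}$ is surjective by Theorem~\ref{Th3}. Yet for $Z=\{z=0\}\cap\{x+y=0\}$ and $Y''=\{x=0,\,y=z\}\in\A^{X_1}$, no hyperplane of $\A$ through $Z$ restricts to $Y''$, and the join $Y''+Z=\{x+y-z=0\}$ is not in $\A$. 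So surjectivity of the residue map cannot force the membership you need; the extra flag structure (that $Y''$ lies in level at most $j-1$ while $H,H'$ lie in level $j$) is invisible after localizing at $W$, and your argument never uses it beyond producing $Y''$.

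This is also why the paper argues globally rather than by induction on $\ell$: it first uses the whole heavy flag to pin down $\exp(\A^{X_1},m^{X_1})=(m_2,\ldots,m_\ell)$, hence $b_2^{X_1}(\A,m)=\sum_{2\le i<j\le\ell}m_im_j$, and then compares this number with the sum of local $b_2$'s over all codimension-two flats not contained in $X_1$, bounded below flat by flat via Lemma~\ref{LemIneq2}. The forced equality shows that every such flat lies on a hyperplane from a different --- and then, by the flag structure, strictly lower --- filtration level, which is exactly the statement $K\in\A_{X_{j-1}}$. If you want to keep your inductive framework, you would still need a global counting input of this kind to certify $K\in\A$; localization at $W$ alone cannot deliver it.
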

\begin{proof}
Set $m_i:=m^{X_{i-1}}(X_i)$ and define the filtration $\A_{X_1} \subset \cdots \subset \A_{X_{\ell-1}}\subset \A_{X_{\ell}}=\A$. 
Now let $\A_{X_i}\setminus \A_{X_{i - 1}} =\{H^i_1,\ldots ,H^i_{t_i}\}$ for $i=1,\ldots,\ell$ and some $t_i$.
First, by Theorem~\ref{AYheavy}, 
$(\A^{X_1},m^{X_1})$ is free with $\exp (\A^{X_1},m^{X_1}) =(m_2,\ldots,m_{\ell})$ and 
\begin{equation}\label{heavycoreq}
b_2^H (\A,m) = b_2 (\A^{X_1},m^{X_1}) = \sum_{2\le i<j\le \ell} m_i m_j.
\end{equation}
Second, we can show that 
\[
\sum_{j=1}^{t_i} m(H^i_j) = m_i.
\]
To show the above, note that 
$$
\A_{X_i} \setminus \A_{X_{i-1}}=\{H \in \A \mid 
X_{i-1} \not \subset H,\ H \cap X_{i-1}=X_i\}.
$$
Hence the definition of the Ziegler restriction ensures that 
$$
\sum_{j=1}^{t_i} m(H^i_j) = m^{X_{i-1}}(X_i)=m_i.
$$
Set $L^{away}:=\{ Y\in L_2(\A)\mid X_1 \not \supset Y\}$ and $L^{diff}:=\{ Y\in L^{away} \mid  H^i_s \cap H^j_k=Y \mbox{ for some } i\neq j \mbox{ and any } s,k\}$.
By Proposition \ref{PrePro2}, 
\begin{equation}\label{Laway}
b_2^H\Am =\sum_{Y \in L^{away} } b_2(\A_Y,m_Y)\ge \sum_{Y \in L^{diff} } 
b_2(\A_Y,m_Y).
\end{equation}
Define $L_i^{diff} := \{ Y\in L^{diff} \mid   H^i_s \cap H^j_k=Y \mbox{ for some } i < j \mbox{ and any } s,k\}$ for $i=2,\ldots,\ell-1 $.
Then by~\eqref{Ineq1}
\begin{equation}\label{Ldiff}
\sum_{Y \in L^{diff} } b_2(\A_Y,m_Y) = \sum_{i=2}^{\ell - 1} \sum_{Y \in L_i^{diff} } b_2(\A_Y,m_Y) \ge \sum_{i=2}^{\ell - 1} m_i \sum_{i<j}^\ell  m_j = \sum_{2\le i<j\le \ell} m_i m_j.
\end{equation} 
By~\eqref{heavycoreq},~\eqref{Laway} and~\eqref{Ldiff} are both equalities. So
\[
\sum_{Y \in L^{away}\setminus L^{diff}} b_2(\A_Y,m_Y)=0,
\]
i.e., $L^{away}=L^{diff}$.
%, which implies that there can not be any intersections between hyperplanes of one filtered piece without a hyperplane of a different filtered piece, i.e., 
Hence if $H^i_s \cap H^i_r =Y$ for some $s\neq r$ and $Y \in L^{away}$, then 
there is $H^j_k\supset Y$ with $i\neq j$. 
Assume $j>i$. Then $\codim H^i_s\cap H^i_t\cap H^j_k=3$ by the definition of the filtration.
Hence $Y\in L_3(\A)$, which is a contradiciton. Thus $j<i$ in this situation, which shows that the given filtation is a supersolvable filtration.
\end{proof}

%One can check all possible filtrations of a simple arrangement to decide the supersolvability of a given %simple arrangement combinatorially. This means that the converse of the above statement is a %combinatorial criterion to proof the non-freenes of a multiarrangement which admits a heavy flag.

As a corollary, we can relate the non-supersolvability with non-freeness and heavy flags.

\begin{cor}\label{cor1}
Let $\Am$ be a multiarrangment with a heavy flag, as in the above notation. If $\A$ is not supersolvable, then $\Am$ is not free.
\end{cor}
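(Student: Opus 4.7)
The plan is to obtain Corollary~\ref{cor1} as the direct contrapositive of Theorem~\ref{heavycor}. Assume toward contradiction that $(\A,m)$ is free. Since by hypothesis $(\A,m)$ carries a heavy flag $\{X_i\}_{i=0}^{\ell}$ with $X_i\in L_i(\A)$ and $X_{i+1}$ heavy in $(\A^{X_i},m^{X_i})$ for $i=0,\ldots,\ell-1$, all hypotheses of Theorem~\ref{heavycor} are met, and that theorem produces a supersolvable filtration $\A_{X_1}\subset\cdots\subset\A_{X_{\ell-1}}\subset\A_{X_\ell}=\A$. This contradicts the assumption that $\A$ is not supersolvable, so $(\A,m)$ cannot be free.

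The single technical point to address is the essentiality hypothesis in Theorem~\ref{heavycor}. Here I would observe that it is automatic: the existence of a complete chain $X_0\supset X_1\supset\cdots\supset X_\ell$ with $\codim X_i=i$ forces $X_\ell$ to be the origin, whence $\bigcap_{H\in\A}H\subseteq X_\ell=\{0\}$ and $\A$ is essential. Thus no separate reduction step is needed and the heavy-flag hypothesis alone suffices to apply Theorem~\ref{heavycor}.

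There is no substantial obstacle here; the corollary is genuinely a one-line consequence of Theorem~\ref{heavycor}, and the proof proposal is essentially to state this contrapositive cleanly. The only thing one has to be mildly careful about is confirming that the indexing conventions for the heavy flag in Theorem~\ref{heavyflag} and in Theorem~\ref{heavycor} agree, which they do after the obvious shift $i\mapsto i-1$.
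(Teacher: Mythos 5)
Your proposal is correct and matches the paper, which states Corollary~\ref{cor1} without proof precisely because it is the contrapositive of Theorem~\ref{heavycor}. Your observation that essentiality is automatic from the existence of $X_\ell\in L_\ell(\A)$ (so that $\bigcap_{H\in\A}H\subseteq X_\ell=\{0\}$) correctly disposes of the only hypothesis not explicitly restated in the corollary.
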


The Euler restriction of a simple free arrangement is not necessarily free 
%as a counterexample 
shown by Edelman and Reiner in \cite{ER}. 
However, in our setting we can show the following.

\begin{cor}\label{cor2}
Let $\A$ be a free arrangement and $\{X_i\}_{i=0}^\ell$ be a heavy flag. 
Then $\A^{X_i}$ is supersolvable for all $i=1,\ldots,\ell-1$. In particular, $\A^{X_i}$ is 
free for all $i=1,\ldots,\ell-1$.
\end{cor}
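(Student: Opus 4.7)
The plan is to apply Theorem~\ref{heavycor} to the multiarrangement $(\A^{X_i},m^{X_i})$ together with the tail $\{X_j\}_{j=i}^\ell$ of the given heavy flag, for each $i=1,\ldots,\ell-1$. This reduces to checking three things: $(\A^{X_i},m^{X_i})$ is free, it is essential in $X_i$, and $\{X_j\}_{j=i}^\ell$ satisfies the heaviness requirement needed by Theorem~\ref{heavycor}.

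First, I would prove by induction on $i$ that $(\A^{X_i},m^{X_i})$ is free. For the base case $i=1$: since $\A$ is central and free, the Euler derivation forces $1\in\exp(\A)$, so Theorem~\ref{PreTh4} directly gives freeness of the Ziegler restriction $(\A^{X_1},m^{X_1})$. For the inductive step, the heavy flag hypothesis says that $X_i$ is heavy in the (by induction) free multiarrangement $(\A^{X_{i-1}},m^{X_{i-1}})$, so Theorem~\ref{AYheavy}(1) yields freeness of its Euler-Ziegler restriction, which by transitivity of restrictions equals $(\A^{X_i},m^{X_i})$.

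Next, $\{X_j\}_{j=i}^\ell$ is a flag of $\A^{X_i}$, and the heaviness requirement of Theorem~\ref{heavycor} for this tail asks that $X_{j+1}$ be heavy in $((\A^{X_i})^{X_j},(m^{X_i})^{X_j})=(\A^{X_j},m^{X_j})$ for each $j=i,\ldots,\ell-2$, which is simply a restriction of the original heavy flag assumption on $\A$. Essentialness holds because $\bigcap_{H\in\A^{X_i}}H=X_\ell=\{0\}$ inside $X_i$. In the degenerate case $i=\ell-1$ the arrangement $\A^{X_{\ell-1}}$ is one-dimensional and trivially supersolvable; otherwise Theorem~\ref{heavycor} applied to $(\A^{X_i},m^{X_i})$ produces a supersolvable filtration of $\A^{X_i}$ along the flag, and $\A^{X_i}$ is in particular free.

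The step requiring most care is the transitivity identification $((\A^{X_{i-1}})^{X_i},(m^{X_{i-1}})^{X_i})=(\A^{X_i},m^{X_i})$. On the level of flats this is standard, but the multiplicities need to be unravelled via the defining sum and equation~\eqref{PreEq1}; this uses that heaviness along the flag makes every intermediate $X_k$ locally heavy, so that Euler and Ziegler multiplicities coincide at each step of the iteration.
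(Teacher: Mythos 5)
Your proof is correct and follows essentially the same route as the paper: the paper's (two-line) argument likewise iterates Theorem~\ref{AYheavy} to get freeness of each $(\A^{X_i},m^{X_i})$ and then feeds these into Theorem~\ref{heavycor}. You merely spell out the details the paper suppresses — the base case via Ziegler's Theorem~\ref{PreTh4} (which is indeed needed, since $X_1$ need not be heavy in the simple arrangement $\A$ itself), the essentiality check, and the degenerate rank-one case.
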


\begin{proof}
By Theorem~\ref{AYheavy} the unbalanced multiarrangments $(\A^{X_i},m^{X_i})$ are free and satisfy the conditions of Theorem~\ref{heavycor}. Hence Theorem~\ref{heavycor} completes the 
proof.
\end{proof}
\begin{example}
Let $(\A,m)$ be defined by 
$$
x^1y^2z^7(x+y)^1
(y-z)^2(z-x)^1=0.
$$
Then $\Am$ has the heavy flag $H:=\{z=0\} \in \Am$ and $\{y=z=0\} \in (\A^H,m^H)$ but it can easily be checked that the arrangement $\A$ is not supersolvable (assume $\mbox{char } \K\neq 2$). Hence, $\Am$ can not be free by Corolary~\ref{cor1}.

Let $\B$ in $V=\K^4$ be defined by
$$
xyw(x-z)(y-z)(y-w)\prod_{k=-2}^2(z-kw)=0.
$$
Then $\B$ has the heavy flag $H:=\{w=0\} \in \B$, $L:=\{w=z=0\} \in (\B^H,m^H)$ and $K:=\{w=z=y=0\} \in (\B^L,m^L)$, with 
$m^V(H)=1$, $m^H(L)=5$, $m^L(K)=3$ and $m^K(0)=2$. So the RHS of~\eqref{flagEq} evaluates to $1(5+3+2)+5(3+2)+3\cdot 2=41$.
Further, we can compute $b_2(\B)=41$, which implies that $\B$ is free with  $\exp(\A)=(1,5,3,2)$ by Theorem~\ref{heavyflag}.

Hence, Corolary~\ref{cor2} implies that the simple arrangements $\B^H$ and $\B^L$ are supersolvable (hence free) as well.
\end{example}

\end{document}